\newcolumntype{P}[1]{>{\centering\arraybackslash}p{#1}}
\def\csname ver@etex.sty\endcsname{3000/12/31} 
\DeclareMathOperator{\dom}{dom}
\DeclareMathOperator{\prox}{prox}
\DeclareMathOperator{\dist}{dist}
\DeclareMathOperator*{\argmin}{argmin}
\DeclareMathOperator{\EE}{\mathbb{E}}
\DeclarePairedDelimiter\ceil{\lceil}{\rceil}
\DeclarePairedDelimiterX\inner[2]{\langle}{\rangle}{{#1},{#2}}
\DeclarePairedDelimiter\abs{|}{|}
\DeclarePairedDelimiter\norm{\|}{\|}
\DeclarePairedDelimiter\set{\{}{\}}
\DeclarePairedDelimiter\prn{(}{)}
\DeclarePairedDelimiter\bra{[}{]}
\DeclarePairedDelimiterX\Set[2]{\{}{\}}{\mspace{2mu}{#1}\;\delimsize|\;{#2}\mspace{2mu}}
\DeclarePairedDelimiterX\Prn[2]{(}{)}{\mspace{2mu}{#1}\;\delimsize|\;{#2}\mspace{2mu}}
\DeclarePairedDelimiterX\Bra[2]{[}{]}{\mspace{2mu}{#1}\;\delimsize|\;{#2}\mspace{2mu}}
\newcommand{\N}{\mathbb N}
\newcommand{\R}{\mathbb R}
\renewcommand{\epsilon}{\varepsilon}
\NewDocumentCommand{\exsub}{s m O{} m}{%
  \IfBooleanT{#1}{\EE_{#2}\nolimits\bra*{#4}}%
  \IfBooleanF{#1}{\EE_{#2}\nolimits\bra[#3]{#4}}%
}
\newcommand{\mathInd}{\hphantom{{}={}}}
\newcommand{\by}[2][]{\text{\pbox[c]{\textwidth}{(by \pbox[t]{\textwidth}{\,\!#2)#1}}}}
\newcommand{\revise}[1]{#1}
\newcommand\doublecheck{{\checked\kern-0.4em\checked}}
\DeclareMathOperator{\LogSoftmax}{LogSoftmax}
\DeclarePairedDelimiter\normop{\|}{\|_{\mathrm{op}}}
\declaretheoremstyle[
  bodyfont=\normalfont\itshape,
]{thmsty}
\declaretheorem[
  name=Assumption,
  refname={Assumption,Assumptions},
  style=thmsty,
]{assumption}
\crefname{algorithm}{Algorithm}{Algorithms}
\crefname{line}{Line}{Lines}
\crefname{section}{Section}{Sections}
\crefname{appendix}{Appendix}{Appendices}
\crefname{table}{Table}{Tables}
\crefname{lemma}{Lemma}{Lemmas}
\crefname{theorem}{Theorem}{Theorems}
\crefname{proposition}{Proposition}{Propositions}
\crefname{remark}{Remark}{Remarks}
\crefname{figure}{Fig.}{Figs.}
\crefname{equation}{}{}
\Crefname{equation}{Eq.}{Eqs.}
\setlist[itemize]{
  topsep=0.4\baselineskip,
  itemsep=0\baselineskip,
  leftmargin=1.5em,
}
\setlist[enumerate]{
  font=\upshape,
  label=(\alph*),
  ref=(\alph*),
  topsep=0.4\baselineskip,
  itemsep=0\baselineskip,
  leftmargin=2em,
}
\newlist{enuminasm}{enumerate}{1} 
\setlist[enuminasm]{
  font=\upshape,
  label=(\alph*),
  ref=\theassumption(\alph*),
  topsep=0.4\baselineskip,
  itemsep=0\baselineskip,
  leftmargin=2em,
}
\newlist{enuminthm}{enumerate}{1}
\setlist[enuminthm]{
  font=\upshape,
  label=(\alph*),
  ref=\thetheorem(\alph*),
  topsep=0.4\baselineskip,
  itemsep=0\baselineskip,
  leftmargin=2em,
}
\newlist{enuminlem}{enumerate}{1}
\setlist[enuminlem]{
  font=\upshape,
  label=(\alph*),
  ref=\thelemma(\alph*),
  topsep=0.4\baselineskip,
  itemsep=0\baselineskip,
  leftmargin=2em,
}
\newcommand{\Proposed}{\textsf{Proposed}\xspace}
\newcommand{\PG}{\textsf{PG}\xspace}
\newcommand{\ABO}{\textsf{ABO}\xspace}
\newcommand{\DP}{\textsf{DP}\xspace}
\begin{document}

\title{%
  Accelerated-gradient-based generalized Levenberg--Marquardt method with oracle complexity bound and local quadratic convergence%
  \thanks{%
    This work was supported by JSPS KAKENHI (19H04069, 20H04145, 20K19748) and JST ERATO (JPMJER1903).
  }%
}

\titlerunning{Generalized LM with oracle complexity bound and local quadratic convergence}        

\author{%
  Naoki Marumo \and
  Takayuki Okuno \and
  Akiko Takeda
}


\institute{%
  N. Marumo \at
  Graduate School of Information Science and Technology,
  University of Tokyo,
  Tokyo 113-8656, Japan\\
  \revise{\email{marumo@mist.i.u-tokyo.ac.jp}}
  \and
  T. Okuno \at
  Faculty of Science and Technology,
  Seikei University,
  Tokyo 180-8633, Japan\\
  Center for Advanced Intelligence Project, RIKEN,
  Tokyo 103-0027, Japan
  \and
  A. Takeda \at
  Graduate School of Information Science and Technology,
  University of Tokyo,
  Tokyo 113-8656, Japan\\
  Center for Advanced Intelligence Project, RIKEN,
  Tokyo 103-0027, Japan
}

\date{Received: date / Accepted: date}

\maketitle

\begin{abstract}
  Minimizing the sum of a convex function and a composite function appears in various fields. The generalized Levenberg--Marquardt (LM) method, also known as the prox-linear method, has been developed for such optimization problems. The method iteratively solves strongly convex subproblems with a damping term.

  This study proposes a new generalized LM method for solving the problem with a smooth composite function. The method enjoys three theoretical guarantees: iteration complexity bound, oracle complexity bound, and local convergence under a H\"olderian growth condition. The local convergence results include local quadratic convergence under the quadratic growth condition; this is the first to extend the classical result for least-squares problems to a general smooth composite function. In addition, this is the first LM method with both an oracle complexity bound and local quadratic convergence under standard assumptions. These results are achieved by carefully controlling the damping parameter and solving the subproblems by the accelerated proximal gradient method equipped with a particular termination condition. Experimental results show that the proposed method performs well in practice for several instances, including classification with a neural network and nonnegative matrix factorization.
  \keywords{%
    Nonconvex optimization \and
    Levenberg--Marquardt method \and
    prox-linear method \and
    worst-case complexity \and
    local convergence
  }
  \subclass{90C06 \and 90C26 \and 90C30}
\end{abstract}

\section{Introduction}
We consider the nonconvex optimization problem of minimizing the sum of a convex function and a composite function:
\begin{align}
  \min_{x \in \R^d}\ 
  \set[\Big]{ F(x) \coloneqq g(x) + h(c(x))}.
  \label{eq:problem_main}
\end{align}
Here, $g: \R^d \to \R \cup \set{+\infty}$ is proper closed convex (but possibly nonsmooth), $h: \R^n \to \R$ is smooth convex, and $c: \R^d \to \R^n$ is smooth.
The function $g$ can also represent convex constraints by setting $g$ to an indicator function.
Optimization problems of this form appear in various fields, including statistics, signal processing, and machine learning. 
A typical example is regression and classification; the task is learning a function that predicts $y \in \R^q$ from $x \in \R^p$, given many data pairs, $(x_1, y_1), \dots, (x_N, y_N)$.
Let $\phi_w: \R^p \to \R^q$ be the function to be learned, and $\ell: \R^q \times \R^q \to \R$ be a function that measures the error.
The function $\phi_w$ is parametrized by $w \in \R^d$, and its modern example is neural networks.
Then, the task is formulated as
\begin{align}
  \min_{w \in \R^d} \ 
  \frac{1}{N} \sum_{i=1}^N \ell(\phi_w(x_i), y_i),
\end{align}
and this optimization problem is an instance of \cref{eq:problem_main}.

The \emph{Levenberg--Marquardt (LM) method} \citep{levenberg1944method,marquardt1963algorithm} was first introduced for least-squares problems, $h(\cdot) = \frac{1}{2}\norm{\cdot}^2$, and has been extended to the general problem \cref{eq:problem_main} \citep{nesterov2007modified,lewis2016proximal,drusvyatskiy2018error}.
The generalized LM method,\footnote{
  In this paper, we refer to the LM method extended to the general case as the generalized LM method or simply the LM method.
  In some literature (e.g., \citep{drusvyatskiy2018error}), it is called the proximal linear method or the prox-linear method.
}
given the $k$-th iterate $x_k \in \R^d$, defines the next one $x_{k+1}$ as an exact or approximate solution to the subproblem
\begin{align}
  \min_{x \in \R^d}\ 
  \set*{
    \bar F_{k, \mu}(x)
    \coloneqq
    g(x)
    +
    h \prn[\big]{
      c(x_k) + \nabla c(x_k)(x - x_k)
    }
    + \frac{\mu}{2} \norm*{x - x_k}^2
  },
  \label{eq:subproblem}
\end{align}
where $\mu > 0$.
This problem is strongly convex and hence has a unique optimum.
The objective function $\bar F_{k, \mu}$ is obtained by approximating $c$ in $F$ with the Jacobian matrix $\nabla c(x_k) \in \R^{n \times d}$ and adding a damping term that enforces the next iterate $x_{k+1}$ to be close to $x_k$.
As we will see in the next section, various LM methods have been proposed, and the difference lies in, for example, the choice of the damping parameter $\mu$.
We propose a new LM method that achieves theoretically fast global and local convergence.

To evaluate the global convergence rate of LM methods, several studies \citep{ueda2010global,zhao2016global,bergou2020convergence,marumo2023majorization,drusvyatskiy2018error} have derived upper bounds on \emph{iteration complexity}, which is the number of iterations required to find an $\epsilon$-stationary point.
The iteration complexity makes sense as a measure for the whole running cost if we explicitly compute $c(x_k)$ and $\nabla c(x_k)$ before solving subproblem~\cref{eq:subproblem} and their computational costs are dominant; then, the total cost of LM is proportional to the iteration complexity.
On the other hand, computing the $n \times d$ matrix $\nabla c(x_k)$ should be avoided from a practical standpoint for large-scale problems where $n$ and $d$ amount to thousands or millions.
For such problems, it is practical to access $c$ through the Jacobian-vector products $\nabla c(x_k) u$ and $\nabla c(x_k)^\top v$ without explicitly computing $\nabla c(x_k)$ \citep{drusvyatskiy2019efficiency}.\footnote{
  Automatic differentiation libraries such as JAX~\citep{jax2018github} compute the Jacobian-vector products at several times the cost of evaluating $c(x)$.
  See, e.g., the JAX documentation \citep{jax2022autodiff}.
}
In this case, the iteration complexity is no longer appropriate as a measure of complexity because it hides how many times the Jacobian-vector products are computed to solve subproblem~\cref{eq:subproblem}, and \emph{oracle complexity} is used instead.
The oracle complexity is the number of oracle calls required to find an $\epsilon$-stationary point.
In this paper, we assume that we have access to oracles that compute $c(x)$, $\nabla c(x) u$, $\nabla c(x)^\top v$, $h(y)$, $\nabla h(y)$, and
\begin{align}
  \prox_{\eta g}(x)
  \coloneqq
  \argmin_{z \in \R^d} \set*{
    g(z) + \frac{1}{2 \eta} \norm*{z - x}^2
  }
\end{align}
for given $x, u \in \R^d$, $y, v \in \R^n$, and $\eta > 0$, as in \citep{drusvyatskiy2019efficiency}.
We analyze the iteration complexity of the proposed LM method for small-scale problems and the oracle complexity for large-scale problems.

The local behavior of the sequence $(x_k)_{k \in \N}$ should also be investigated because some algorithms move faster in the neighborhood of a stationary point.
For example, Newton's method (with regularization) achieves local quadratic convergence but requires the costly computation of Hessian matrices; the gradient descent (GD) method does not use Hessian but fails quadratic convergence.
Some LM methods, including ours, achieve local quadratic convergence without Hessian by exploiting the composite structure of $h(c(x))$ in problem~\cref{eq:problem_main}.
\cref{fig:rosenbrock} shows that our LM method has a better local performance than GD.

\begin{figure}[t]
  \centering
  \includegraphics[width=0.49\linewidth]{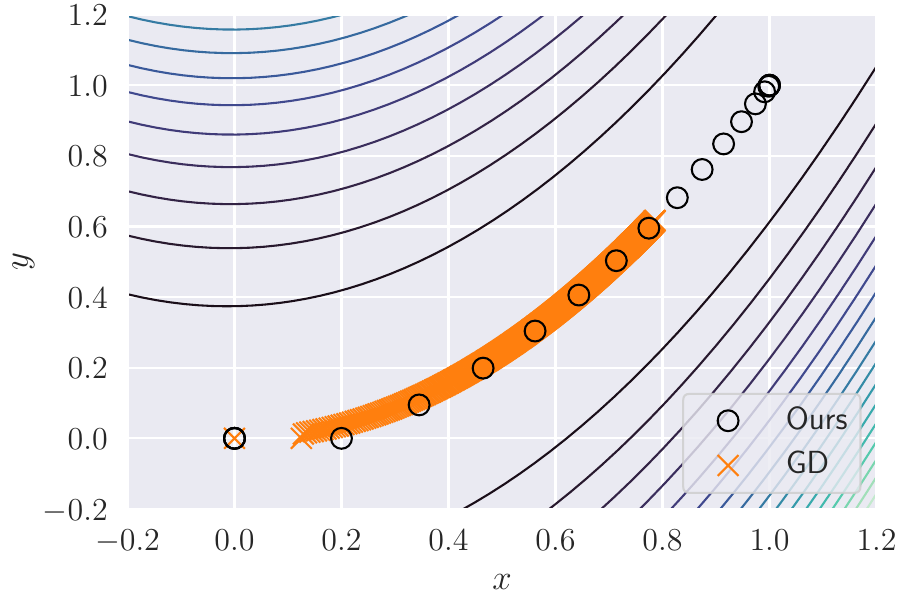}
  \hfill
  \includegraphics[width=0.49\linewidth]{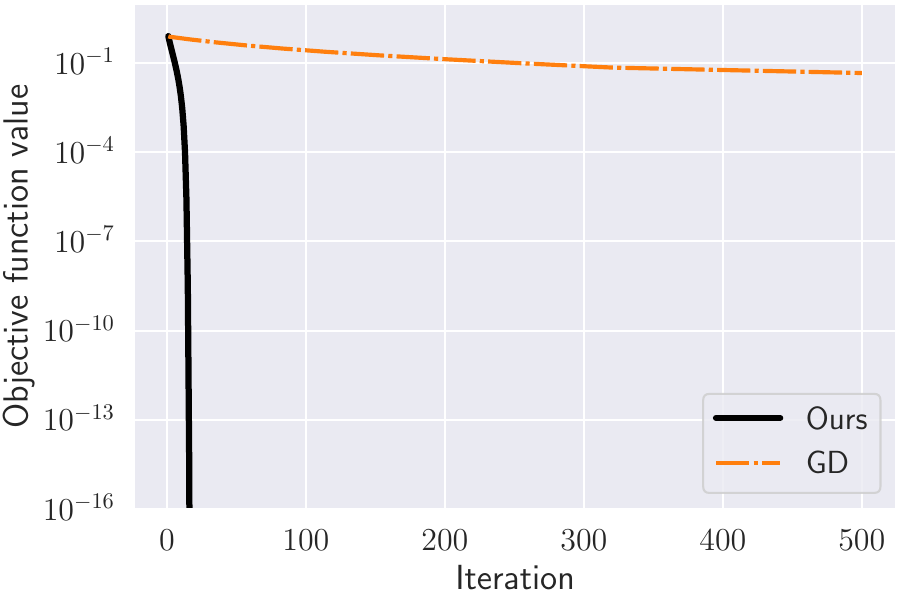}
  \caption{
    Minimization of the Rosenbrock function~\citep{rosenbrock1960automatic}: $\min_{(x, y) \in \R^2}\, (x - 1)^2 + 100 (y - x^2)^2$.
    GD is gradient descent where the step-size is adaptively chosen by standard backtracking with the Armijo condition.
    The starting point of each algorithm is $(0, 0)$ and the optimal solution is $(1, 1)$.
    The left figure shows the sequences generated by each method.
  }
  \label{fig:rosenbrock}
\end{figure}

\subsection*{Our contribution}
We define the set $S_{k, \mu}$ of approximate solutions to subproblem~\cref{eq:subproblem} (see \cref{eq:def_Sk} for the definition of $S_{k, \mu}$) and propose an LM method (\cref{alg:proposed_backtracking}) that computes a point in $S_{k, \mu}$ at each iteration $k \in \N$.
Our theoretical contributions are listed below.
\begin{itemize}
  \item 
  \textbf{Iteration complexity.}
  \revise{
    The proposed method has parameters $g^*$ and $h^*$ such that $g^* \leq \inf_{x \in \R^d} g(x)$ and $h^* \leq \inf_{y \in \R^n} h(y)$.
    Define $\Delta_k$ and $F^*$ by 
    \begin{align}
      \Delta_k
      &\coloneqq
      F(x_k) - (g^* + h^*),\quad
      F^*
      \coloneqq
      \inf_{x \in \R^d} F(x).
      \label{eq:def_Deltak_Fast}
    \end{align}%
    (Note that $\Delta_k \geq 0$ by definition.)
  }%
  Our LM method has the following iteration complexity bound for an $\epsilon$-stationary point:
  \begin{align}
    O \prn*{
      \frac{L_c \sqrt{L_h \Delta_0}}{\epsilon^2}
      (F(x_0) - F^*)
    },
    \label{eq:our_iteration_complexity}
  \end{align}
  where $L_c$ and $L_h$ denote the Lipschitz constants of $\nabla c$ and $\nabla h$.

  \item
  \textbf{Oracle complexity.}
  When a point in $S_{k, \mu}$ is computed by an accelerated method (\cref{alg:apg}), our LM method has the following oracle complexity bound for an $\epsilon$-stationary point:
  \begin{align}
    O \prn*{
      \frac{ L_c \sqrt{L_h \Delta_0}}{\epsilon^2}
      (F(x_0) - F^*)
      \prn*{
        1 + 
        \sqrt{\kappa}\log \kappa
      }
    },
    \label{eq:our_oracle_complexity}
  \end{align}
  where 
  \begin{align}
    \kappa
    \coloneqq
    1 + 
    \frac{\sqrt{L_h} \sigma^2}{L_c \sqrt{\Delta_0}},\quad
    \sigma \coloneqq \sup_{k \in \N} \normop*{\nabla c(x_k)},
    \label{eq:def_kappa_sigma}
  \end{align}
  and $\normop*{\cdot}$ denotes the operator norm.
  
  \item
  \textbf{Local convergence.}
  For our method, the sequences $(F(x_k))_{k \in \N}$ and $(\Delta_k)_{k \in \N}$ are ensured to converge to some values, say $F_\infty$ and $\Delta_\infty$.
  We analyze the order of convergence of $(F(x_k))_{k \in \N}$ assuming the following H\"olderian growth condition:
  for some $1 \leq r < 3$, $\gamma > 0$, and $K \in \N$,
  \begin{align}
    \frac{\gamma}{r} \dist(x_k, X^*)^r \leq F(x_k) - F_\infty,\quad
    \forall k \geq K,
    \label{eq:holderian}
  \end{align}
  where $\dist(x, X^*)$ denotes the distance between the point $x \in \R^d$ and the set $X^* \subseteq \R^d$ of stationary points.
  In particular,
  \begin{itemize}
    \item 
    if $r = 2$ and $\Delta_\infty = 0$, then $(F(x_k))_{k \in \N}$ converges quadratically;
    \item
    if $r = 2$ and $\Delta_\infty$ is small enough, then $(F(x_k))_{k \in \N}$ converges linearly.
  \end{itemize}
\end{itemize}
A significant result is that our LM method simultaneously achieves both an oracle complexity bound and local quadratic convergence under standard assumptions.
As shown in \cref{table:complexity} later, some existing LM methods have iteration and oracle complexity bounds, and other LM methods have an iteration complexity bound and local quadratic convergence.
However, to the best of our knowledge, no methods exist with an oracle complexity bound and local quadratic convergence, even for least squares.

Achieving an oracle complexity bound and local quadratic convergence simultaneously is challenging due to the following two conflicts.
First, the damping parameter $\mu$ needs to be small for local quadratic convergence, while a small $\mu$ increases the cost for the subproblem, hence the oracle complexity.
Second, local quadratic convergence requires subproblem \cref{eq:subproblem} to be solved with high accuracy, meaning that the approximate solution set $S_{k, \mu}$ needs to be small.
However, a smaller set $S_{k, \mu}$ increases the oracle complexity.
To resolve these conflicts, we carefully define $\mu$ and $S_{k, \mu}$; such $\mu$ and $S_{k, \mu}$ are small enough for local convergence, and at the same time, a point in $S_{k, \mu}$ can be found quickly using an accelerated method.
Thus, our method has succeeded in having such theoretical guarantees.

To investigate the practical performance of the proposed method, we conducted several numerical experiments.
The results show that our method converges faster than existing methods for large-scale problems with $10^4$ to $10^6$ variables.

\paragraph{Paper organization}
\cref{sec:related_work} reviews existing methods and compares our results with them.
\cref{sec:algorithm} describes our LM method and presents an accelerated method for the subproblems.
We also show a key lemma for the LM method and provide the complexity result for the accelerated methods.
\cref{sec:global_convergence,sec:local_convergence} show the global complexity and local convergence results, respectively.
\cref{sec:experiments} provides several numerical results, and \cref{sec:conclusion} concludes the paper.
Some lemmas and proofs are deferred to the appendix.

\paragraph{Notation}
Let $\N$ denote the set of nonnegative integers: $\N \coloneqq \set{0,1,\dots}$.
The symbol $\inner{\cdot}{\cdot}$ denotes the dot product, $\norm{\cdot}$ denotes the Euclidean norm, and $\normop{\cdot}$ denotes the operator norm of a matrix with $\norm{\cdot}$, which is equivalent to the largest singular value.
Let $\ceil{\cdot}$ denote the ceiling function.

\section{Comparison with existing methods}
\label{sec:related_work}
We review the existing theoretical results
and compare our work with them.
This section uses the same notations as used in \cref{eq:our_iteration_complexity,eq:our_oracle_complexity,eq:holderian}, and the algorithms discussed in this section are summarized in \cref{table:complexity}.

\begin{table}[t]
  \centering
  \begin{threeparttable}
    \caption{
      Comparison of methods for problem~\cref{eq:problem_main} with smooth $h$ and $c$.
      For readability, the iteration and oracle complexity bounds are divided by a common factor, $L_c \epsilon^{-2} (F(x_0) - F^*)$.
    }
    \label{table:complexity}
    \small
    \setlength{\tabcolsep}{0.4em}
    \def\arraystretch{1.3}
    \begin{tabular}{c|cccccc}\toprule
      Assumption for $h$                     & Algorithm                                                                                                                                                                                                                                                            & \#Iteration                                                                    & \#Oracle                                                                       & Local      \\\midrule
      $h(\cdot) = \frac{1}{2}\norm{\cdot}^2$ & Proximal gradient                                                                                                                                                                                                                                                    & $O \prn[\big]{ \sqrt{\Delta_0}  \kappa }$                                      & $O \prn[\big]{ \sqrt{\Delta_0}  \kappa }$                                      &            \\
                                             & \citep{ueda2010global,zhao2016global,bellavia2018levenberg}                                                                                                                                                                                                          & $O \prn[\big]{ \sqrt{\Delta_0}  \kappa }$                                      &                                                                                &            \\
                                             & \citep{yamashita2001rate,bellavia2014strong,bellavia2010convergence,behling2012unified,dan2002convergence,facchinei2013family,fan2005quadratic,fan2006convergence,fischer2010inexactness,kanzow2004levenberg,ahookhosh2019local,bao2019modified,wang2021convergence} &                                                                                &                                                                                & \checkmark \\
                                             & \citep{bergou2020convergence} A                                                                                                                                                                                                                                      & $\tilde O \prn[\big]{ \frac{\Delta_0}{\Delta_\infty} \sqrt{\Delta_0} \kappa }$ & $\tilde O \prn[\big]{ \frac{\Delta_0}{\Delta_\infty} \sqrt{\Delta_0} \kappa }$ &            \\
                                             & \citep{bergou2020convergence} B                                                                                                                                                                                                                                      & $\tilde O \prn[\big]{ \frac{\Delta_0}{\Delta_\infty} \sqrt{\Delta_0} \kappa }$ &                                                                                & \checkmark \\
                                             & \citep{marumo2023majorization} A                                                                                                                                                                                                                                     & $O \prn[\big]{ \sqrt{\Delta_0} \kappa }$                                       & $O \prn[\big]{ \sqrt{\Delta_0} \kappa }$                                       &            \\
                                             & \citep{marumo2023majorization} B                                                                                                                                                                                                                                     & $O \prn[\big]{ \sqrt{\Delta_0} \kappa }$                                       &                                                                                & \checkmark \\
                                             & \textbf{This work}                                                                                                                                                                                                                                                   & $O \prn[\big]{ \sqrt{ \Delta_0 } }$                                            & $\tilde O \prn[\big]{ \sqrt{ \Delta_0 } \sqrt{\kappa} }$                       & \checkmark \\\cmidrule{1-5}
      convex, smooth                         & Proximal gradient                                                                                                                                                                                                                                                    & $O \prn[\big]{ \sqrt{L_h \Delta_0} \kappa }$                                   & $O \prn[\big]{ \sqrt{L_h \Delta_0} \kappa }$                                   &            \\
                                             & \textbf{This work}                                                                                                                                                                                                                                                   & $O \prn[\big]{\sqrt{L_h \Delta_0} }$                                           & $\tilde O \prn[\big]{\sqrt{L_h \Delta_0 } \sqrt{\kappa} }$                     & \checkmark \\\cmidrule{1-5}
      convex, smooth, Lipschitz              & \citep{drusvyatskiy2019efficiency}                                                                                                                                                                                                                                   & $O \prn[\big]{ K_h }$                                                          & $\tilde O \prn[\big]{ K_h \sqrt{\kappa'} }$                                    &            \\\bottomrule
    \end{tabular}
    \begin{tablenotes}
      \item
      \small
      This table uses the same notations as used in \cref{eq:our_iteration_complexity,eq:our_oracle_complexity,eq:holderian}, and $\kappa'$ is defined in \cref{eq:complexity_drus}.
      \item
      Because $h(\cdot) = \frac{1}{2} \norm{\cdot}^2$ implies $L_h = 1$, the constant $L_h$ does not appear in the complexity bounds dedicated to that case.
      \item
      The notation $\tilde O$ hides logarithmic factors in $L_c$, $L_h$, $K_h$, $\sigma$, $\Delta_0$, $\kappa$, $\kappa'$, and $\epsilon^{-1}$.
      \item 
      The LM methods marked ``B'' solve subproblems more accurately than ``A'' for the sake of fast local convergence.
      \item
      The mark $\checkmark$ in ``Local'' indicates that the local quadratic convergence is ensured under $\Delta_\infty = 0$ and the quadratic growth condition.
    \end{tablenotes}
  \end{threeparttable}
\end{table}

\subsection{Methods for least squares}
For least-squares problems, $h(\cdot) = \frac{1}{2} \norm{\cdot}^2$, some LM methods \citep{ueda2010global,zhao2016global} achieve the iteration complexity bound of
\begin{align}
  O \prn*{
    \frac{L_c \sqrt{\Delta_0} \kappa}{\epsilon^2} (F(x_0) - F^*)
  }
  \label{eq:complexity_ueda}
\end{align}
with exact solutions to subproblem~\cref{eq:subproblem}, where $\kappa$ is defined in \cref{eq:def_kappa_sigma}.
\revise{
The LM method in \citep{bellavia2018levenberg} deals with cases where errors are included in the computation of $c$ and $\nabla c$ or in the subproblem solutions, achieving the same iteration complexity.
}
Solving the subproblem inexactly with first-order methods yields LM methods \citep{bergou2020convergence,marumo2023majorization} that enjoy the iteration and oracle complexity bounds, which are the same order as \cref{eq:complexity_ueda}.

For local quadratic convergence of LM-type methods, standard assumptions are $\Delta_\infty = 0$ and the H\"olderian growth condition~\cref{eq:holderian} with $r = 2$, also known as a \emph{quadratic growth} condition or an error-bound condition.
Pioneered by \citet{yamashita2001rate}, many studies have shown local quadratic convergence under these assumptions or essentially equivalent assumptions \citep{fan2005quadratic,fan2006convergence,dan2002convergence,fischer2010inexactness,bellavia2014strong,bellavia2010convergence,kanzow2004levenberg,behling2012unified,facchinei2013family,bergou2020convergence,marumo2023majorization}.
Recently, local convergence studies with different $\Delta_\infty$ and $r$ have been advanced.
\citet{bergou2020convergence} showed local linear convergence with small $\Delta_\infty$ and $r = 2$.
Superlinear convergence under $r \geq 2$ has been developed \citep{ahookhosh2019local,bao2019modified,wang2021convergence}.


Some studies achieve fast global and local convergence simultaneously.
\citet{marumo2023majorization} proposed an LM method with both an iteration complexity bound and local quadratic convergence guarantee under $\Delta_\infty = 0$ and the quadratic growth.
\footnote{
  \citet{bergou2020convergence} also consider these convergence properties, but the proposed algorithm has either property depending on $\Delta_\infty$; the iteration complexity bound for $\Delta_\infty > 0$ and local quadratic convergence for $\Delta_\infty = 0$.
}

The proximal gradient method can also be applied to problem~\cref{eq:problem_main}, having iteration and oracle complexity bounds.
See \cref{sec:complexity_pg} for the details.
We note that accelerated proximal gradient methods do not improve the theoretical complexity bound for nonconvex problems under standard assumptions \citep{cartis2010complexity,carmon2020lower}.

\subsection{Methods for convex $h$}
There is another research stream for solving \eqref{eq:problem_main} with general convex (but possibly nonsmooth) $h$.
Some studies \citep{cartis2011evaluation,drusvyatskiy2018error,drusvyatskiy2019efficiency,nesterov2007modified} provide complexity results assuming $h$ is globally Lipschitz continuous, while others \citep{drusvyatskiy2018error,burke1995gauss,li2002convergence,nesterov2007modified} do local convergence results assuming $h$ is \emph{sharp}.\footnote{
  See \citep[Eq.~(31)]{drusvyatskiy2018error} for the definition of sharpness.
  The sharpness implies non-differentiability at the optimal solution.
}
Note that these results do not cover the least-squares setting in the previous section because $h(\cdot) = \frac{1}{2} \norm{\cdot}^2$ is neither globally Lipschitz nor sharp.\footnote{
  Suppose that $g(\cdot) = 0$.
  One might think that setting $h(\cdot) = \norm{\cdot}$ could cover the least-squares settings because such $h$ is Lipschitz continuous, but that is not the case.
  That $h$ makes subproblem~\cref{eq:subproblem} nonsmooth and increases the computational cost compared to the case $h(\cdot) = \frac{1}{2} \norm{\cdot}^2$.
}
The most relevant result to our study is that by \citet{drusvyatskiy2019efficiency}; assuming $h$ is smooth in addition to Lipschitz continuity, the article \citep{drusvyatskiy2019efficiency} developed a method with the oracle complexity of
\begin{align}
  O \prn[\bigg]{
    \frac{L_c K_h}{\epsilon^2}
    (F(x_0) - F^*)
    \prn*{
      1 + 
      \sqrt{\kappa'}\log \kappa'
    }
  },
  \quad\text{where}\quad
  \kappa'
  \coloneqq
  1 + 
  \frac{L_h \sigma^2}{L_c K_h}
  \label{eq:complexity_drus}
\end{align}
and $K_h$ and $L_h$ are the Lipschitz constants of $h$ and $\nabla h$, respectively.
The main idea is employing an accelerated method to solve subproblem~\cref{eq:subproblem} approximately.

Removing the Lipschitz assumption for complexity analysis seems non-trivial.
The assumption yields a simple upper bound on the objective function (see \citep[Lemma~3.2]{drusvyatskiy2019efficiency}), and existing complexity results such as \cref{eq:complexity_drus} are based on the upper bound.

\revise{
\subsection{Methods for nonconvex $g$}
All the studies mentioned above assume the convexity of $g$.
Some recent studies \citep{aravkin2022proximal,aravkin2023levenberg} deal with problems where the regularizer $g$ is nonconvex.
\citet{aravkin2023levenberg} developed an LM method for regularized least squares with iteration and oracle complexity of $O(\epsilon^{-2})$; 
\citet{aravkin2022proximal} developed a quasi-Newton method for a more general problem with similar complexity bounds.
However, these algorithms do not guarantee the local quadratic convergence unlike the proposed method and many of the existing LM methods in \cref{table:complexity}.
}

\subsection{Our method \revise{in comparison with} previous work}
\label{sec:related_work_comparison}
Our method solves problem~\cref{eq:problem_main} without the Lipschitz continuity of $h$, and hence generalizes the classical LM method for least squares.
Our iteration complexity bound~\cref{eq:our_iteration_complexity} improves the existing bound~\cref{eq:complexity_ueda} by $\Theta(\kappa)$, and our oracle complexity bound~\cref{eq:our_oracle_complexity} does the existing bound by $\tilde \Theta(\sqrt{\kappa})$.
On the other hand, the oracle complexity result can also be viewed as removing the Lipschitz assumption for $h$ in the existing result in \cref{eq:complexity_drus}.
Technically, this alleviation is accomplished by taking full advantage of the convexity and smoothness of $h$ to construct another upper bound for the objective function (see \cref{eq:FxFbarx_diff_upperbound}) that is different from the existing one, \citep[Lemma~3.2]{drusvyatskiy2019efficiency}.
Our complexity bounds are also better than the bounds of the proximal gradient method.

Our method also achieves local quadratic convergence under $\Delta_\infty = 0$ and the quadratic growth.
To the best of our knowledge, this is the first generalization of the classical quadratic convergence result for $h(\cdot) = \frac{1}{2} \norm{\cdot}^2$ to general smooth functions $h$.
Furthermore, we analyze our method under a more general condition, the H\"olderian growth~\cref{eq:holderian} with $1 \leq r < 3$.
\revise{Our linear convergence result for small $\Delta_\infty > 0$ and $r = 2$ is similar to that of \citep{bergou2020convergence}, but their assumptions are different.
The details will be discussed later in \cref{rem:compare_bergou2020}.}

\revise{
Our complexity bounds \cref{eq:our_iteration_complexity,eq:our_oracle_complexity} are derived with the appropriate setting of the algorithm's parameter based on $L_c$ and $L_h$.
However, even if the parameter is set arbitrarily, we can obtain bounds of the same order with respect to $\epsilon$; see \cref{thm:complexity_global} for the formal statement.
It is a common practice to make assumptions in the parameter setting of algorithms in order to derive simpler or better complexity bounds (e.g., Assumption~4 in \citep{bellavia2018levenberg} and the paragraph below Theorem~6.4 in \citep{drusvyatskiy2019efficiency}).
}

\revise{
Our algorithm needs parameters $g^*$ and $h^*$ satisfying $g^* \leq \inf_{x \in \R^d} g(x)$ and $h^* \leq \inf_{y \in \R^n} h(y)$ under the assumption of $\inf_{x \in \R^d} g(x) > - \infty$ and $\inf_{y \in \R^n} h(y) > - \infty$.
Especially for local quadratic convergence, we have to set $g^* = \inf_{x \in \R^d} g(x)$ and $h^* = \inf_{y \in \R^n} h(y)$, which is implied by  $\Delta_\infty = 0$, and so we need the knowledge of the infimum of $g$ and $h$.
Although this assumption may seem quite restrictive, it is not stronger than that made by existing analyses of LM methods for least squares (or non-Lipschitz $h$).\footnote{
\revise{The method of \citep{drusvyatskiy2019efficiency} does not require the knowledge of the infimum of $g$ and $h$ but assumes that $h$ is Lipschitz continuous.}
}
We know $\inf_{y \in \R^n} h(y) = 0$ if $h(\cdot) = \frac{1}{2} \norm{\cdot}^2$; existing LM methods \citep{yamashita2001rate,bellavia2014strong,bellavia2010convergence,behling2012unified,dan2002convergence,facchinei2013family,fan2005quadratic,fan2006convergence,fischer2010inexactness,kanzow2004levenberg,ahookhosh2019local,bao2019modified,wang2021convergence,ueda2010global,zhao2016global,bergou2020convergence,marumo2023majorization} for least squares assume that $g(\cdot) = 0$ or that $g$ is the indicator function of a convex set, and hence, we know $\inf_{x \in \R^d} g(x) = 0$.
We believe that it is non-trivial and interesting that the knowledge of the infimum of $g$ and $h$ suffices to generalize the results for least-squares to a smooth convex $h$.
}

\section{Our generalized Levenberg--Marquardt method}
\label{sec:algorithm}
This section describes our LM method and show a key lemma called a \emph{descent lemma}, which will be used for our analysis.
We also present an accelerated method for solving subproblem~\cref{eq:subproblem} and provide a complexity result for the method.

\subsection{Notations}
\label{sec:preliminary}
Let us introduce some notations to describe the algorithm.
Recall that the functions $F$ and $\bar F_{k, \mu}$ are defined in \cref{eq:problem_main,eq:subproblem}.
Let $H$ and $\bar H_{k, \mu}$ be the differentiable part of $F$ and $\bar F_{k, \mu}$, respectively:
\begin{align}
  H(x)
  &\coloneqq
  h(c(x)),\quad
  \bar H_{k, \mu}(x)
  \coloneqq
  h \prn[\big]{
    c(x_k) + \nabla c(x_k)(x - x_k)
  }
  + \frac{\mu}{2} \norm*{x - x_k}^2,
  \label{eq:def_H_Hbar}
\end{align}
and we then have
\begin{align}
  F(x)
  = g(x) + H(x),\quad
  \bar F_{k, \mu}(x)
  =
  g(x) + \bar H_{k, \mu}(x).
  \label{eq:def_Fbar}
\end{align}
Let $\dom g$ denote the effective domain of $g$, and let $\partial g(x)$ denote the subdifferential of $g$ at $x \in \dom g$:
\begin{align}
  \dom g
  &\coloneqq
  \Set*{x \in \R^d}{g(x) < + \infty},\\
  \partial g(x)
  &\coloneqq
  \Set*{p \in \R^d}{g(y) - g(x) \geq \inner{p}{y - x}, \ \forall y \in \R^d }.
  \label{eq:def_subdif_g}
\end{align}
We define the first-order optimality measures for problems~\cref{eq:problem_main,eq:subproblem} as follows:
\begin{align}
  \omega(x)
  &\coloneqq
  \min_{p \in \partial g(x)} \norm*{p + \nabla H(x)},\quad
  \bar{\omega}_{k, \mu}(x)
  \coloneqq
  \min_{p \in \partial g(x)} \norm*{p + \nabla \bar H_{k, \mu}(x)}.
  \label{eq:def_error}
\end{align}
Our goal is to find a point $x \in \dom g$ such that $\omega(x) \leq \epsilon$, an $\epsilon$-stationary point of problem~\cref{eq:problem_main}.

\subsection{Assumptions on problem \cref{eq:problem_main}}
We formally state basic assumptions for our LM method.
\begin{assumption}
  \leavevmode
  \label{asm:hc}
  \begin{enuminasm}
    \item
    \label{asm:g_convex}
    $g: \R^d \to \R \cup \set{+\infty}$ is proper closed convex.
    \item
    \label{asm:h_convex}
    $h: \R^n \to \R$ is convex.
    \item
    \label{asm:g_bounded}
    $\inf_{x \in \R^d} g(x) > - \infty$.
    \item
    \label{asm:h_bounded}
    $\inf_{y \in \R^n} h(y) > - \infty$.
    \item
    \label{asm:h_smooth}
    There exists a constant $L_h > 0$ such that $\norm*{\nabla h(y) - \nabla h(x)} \leq L_h \norm{y - x}$ for all $x,y \in \R^n$.
    \item
    \label{asm:nabla-c_lip}
    There exists a constant $L_c > 0$ such that $\normop*{\nabla c(y) - \nabla c(x)} \leq L_c \norm{y - x}$ for all $x,y \in \dom g$.
    \end{enuminasm}
\end{assumption}
\cref{asm:g_convex,asm:h_convex,asm:h_smooth,asm:nabla-c_lip} are standard ones that are also assumed for existing generalized LM method~\citep{drusvyatskiy2019efficiency} for smooth $h$.

\revise{
\Cref{asm:g_bounded,asm:h_bounded} are necessary to set the parameters $g^*, h^* \in \R$ of the proposed algorithm as
\begin{align}
  g^* \leq \inf_{x \in \R^d} g(x),\quad
  h^* \leq \inf_{y \in \R^n} h(y).
  \label{eq:requirement_gast_hast}
\end{align}
Throughout the paper, we use $g^*$ and $h^*$ to denote such parameters.
It may not be common practice to impose \cref{asm:g_bounded,asm:h_bounded} in general nonconvex optimization.
However, it is reasonable for the theoretical guarantee of LM methods, as discussed in \cref{sec:related_work_comparison}.
}
In practical applications, \revise{many functions $g$ and $h$ are bounded below, and their infima are known.}
For example, \cref{asm:g_bounded} holds for the $\ell_1$-norm, the nuclear norm, and the indicator function of a convex set, and \cref{asm:h_bounded} holds for square loss, logistic loss, and Kullback--Leibler divergence.
For the examples of $g$ and $h$ mentioned above, \revise{we know $\inf_{x \in \R^d} g(x) = \inf_{y \in \R^n} h(y) = 0$ and can set $g^* = h^* = 0$.}


\cref{asm:h_smooth,asm:nabla-c_lip} impose the Lipschitz continuity of $\nabla h$ and $\nabla c$ on $\R^n$ and $\dom g$.
These assumptions can be restricted to the sublevel sets as in the existing analysis~\citep{marumo2023majorization}; we can also show the complexity bounds and local convergence shown in \cref{sec:global_convergence,sec:local_convergence} by assuming
\begin{align}
  \norm{\nabla h(y) - \nabla h(x)}
  \leq 
  L_h \norm{y - x},\quad
  \forall x,y \in \Set*{z \in \R^n}{h(z) \leq F(x_0)}
\end{align}
and 
\begin{align}
  \normop*{\nabla c(y) - \nabla c(x)}
  \leq 
  L_c \norm{y - x},\quad
  \forall x,y \in \Set*{z \in \dom g}{F(z) \leq F(x_0)},
\end{align}
where $x_0$ is the starting point of the algorithm.
This allows us to handle a function $h$ whose gradient is not Lipschitz continuous globally, such as exponential functions.

\subsection{Algorithm}
\begin{algorithm}[t]
  \caption{Generalized LM method for problem~\cref{eq:problem_main}}
  \label{alg:proposed_backtracking}
  \KwIn{$x_0 \in \dom g$, $0 < \theta < 1$, $\rho_{\min} > 0$, $\alpha > 1$\revise{, and $g^*, h^* \in \R$ satisfying \cref{eq:requirement_gast_hast}}}
  $\rho \gets \rho_{\min}$\;
  \For{$k=0,1,\dots$}{
    $\mu \gets \rho \sqrt{F(x_k) - (g^* + h^*)}$\;
    \label{alg-line-bt:set_lambda}
    Find $x \in S_{k, \mu}$ by e.g., \cref{alg:apg}
    \tcp*{$S_{k, \mu}$ is defined by \cref{eq:def_Sk}}
    \If{$\displaystyle F(x) \leq F(x_k) - \frac{1 - \theta}{2} \mu \norm*{x - x_k}^2$ does not hold}{
      \label{alg-line-bt:check_cond}
      $\rho \gets \alpha \rho$\;
      \label{alg-line-bt:unsuccessful}
      \Goto{\cref{alg-line-bt:set_lambda}}
    }
    $(x_{k+1}, \mu_k) \gets (x, \mu)$\;
    \label{alg-line-bt:update_xk_muk}
  }
\end{algorithm}

Our LM method is described in \cref{alg:proposed_backtracking}.
This algorithm sets the damping parameter to
\begin{align}
  \mu = \rho \sqrt{F(x_k) - (g^* + h^*)} = \rho \sqrt{g(x_k) + h(c(x_k)) - (g^* + h^*)},
  \label{eq:set_mu}
\end{align}
where $\rho > 0$ is a parameter explained later, and computes an approximate solution $x \in S_{k, \mu}$ to subproblem~\cref{eq:subproblem}, where
\begin{align}
  S_{k, \mu}
  \coloneqq
  \Set[\Big]{x \in \dom g}{ \bar \omega_{k, \mu}(x) \leq \theta \mu \norm{x - x_k} }
  \label{eq:def_Sk}
\end{align}
and $0 < \theta < 1$ is a constant.
The method to find a solution in $S_{k, \mu}$ is described in the next section.

When constructing an LM method to achieve small oracle complexity and local quadratic convergence, we face two types of conflicts.
First, the damping parameter $\mu$ should converge to $0$ for local quadratic convergence, while a small $\mu$ increases the cost for the subproblem, hence the oracle complexity.
Second, subproblem~\cref{eq:subproblem} needs to be solved accurately for local quadratic convergence, while solving the subproblem too accurately increases the oracle complexity.
The damping parameter in \cref{eq:set_mu} and the approximate solution set in \cref{eq:def_Sk} resolve these conflicts and enable us to achieve both small oracle complexity and local quadratic convergence.
We should note that Eq.~\cref{eq:set_mu} is inspired by the existing choice of $\mu = \rho \norm{c(x_k)}$ for the case where $g$ is an indicator function and $h(\cdot) = \frac{1}{2} \norm{\cdot}^2$ \citep{marumo2023majorization}.

\cref{alg:proposed_backtracking} employs backtracking to choose the parameter $\rho$ and ensures that the objective function value decreases sufficiently; for $(x_k)_{k \in \N}$ and $(\mu_k)_{k \in \N}$ obtained by \cref{alg:proposed_backtracking}, we have
\begin{align}
  F(x_{k+1})
  &\leq
  F(x_k)
  - \frac{1 - \theta}{2} \mu_k \norm*{x_{k+1} - x_k}^2.
  \label{eq:descent_k}
\end{align}
The following lemma, called a descent lemma, justifies the backtracking by showing that the inequality in \cref{alg-line-bt:check_cond} of \cref{alg:proposed_backtracking} holds if $\rho$ is sufficiently large.
The lemma also demonstrates one of the benefits of defining $\mu$ and $S_{k, \mu}$ by \cref{eq:set_mu,eq:def_Sk}.
To prove the lemma, we use some inequalities given in \cref{sec:lemmas}.
\begin{lemma}
  \label{lem:descent}
  Suppose that \cref{asm:hc} holds, and set $\mu$ as in \cref{eq:set_mu} with
  \begin{align}
    \rho
    \geq
    \frac{L_c \sqrt{2 L_h}}{1 - \theta}.
    \label{eq:rho_lowerbound}
  \end{align}
  Then, the following holds:
  \begin{align}
    F(x)
    \leq
    F(x_k)
    - \frac{1 - \theta}{2} \mu \norm*{x - x_k}^2,
    \quad
    \forall x \in S_{k, \mu}.
    \label{eq:descent}
  \end{align}
\end{lemma}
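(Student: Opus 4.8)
The plan is to fix an arbitrary $x \in S_{k,\mu}$ and split the objective gap into a convex model part and a nonlinear linearization error. Writing $\ell \coloneqq c(x_k) + \nabla c(x_k)(x - x_k)$ for the linearized argument, we have $\bar H_{k,\mu}(x) = h(\ell) + \frac{\mu}{2}\norm{x - x_k}^2$ and the decomposition $F(x) = \prn*{g(x) + h(\ell)} + \prn*{h(c(x)) - h(\ell)}$. By the definition of $S_{k,\mu}$ in \cref{eq:def_Sk}, there is a subgradient $p \in \partial g(x)$ with $\norm{p + \nabla \bar H_{k,\mu}(x)} \le \theta \mu \norm{x - x_k}$, and I would use that $\nabla \bar H_{k,\mu}(x) = \nabla c(x_k)^\top \nabla h(\ell) + \mu(x - x_k)$.

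First I would bound the convex model $g + h(\ell)$ from above. Testing the subgradient inequality for $g$ at $x$ against $x_k$ and doing the same for the convex function $y \mapsto h(c(x_k) + \nabla c(x_k)(y - x_k))$ (whose gradient at $x$ is $\nabla c(x_k)^\top \nabla h(\ell)$ and whose value at $x_k$ is $h(c(x_k))$), then adding the two inequalities, yields
\[
  F(x_k) \ge g(x) + h(\ell) + \inner{p + \nabla c(x_k)^\top \nabla h(\ell)}{x_k - x}.
\]
Substituting $p + \nabla c(x_k)^\top\nabla h(\ell) = \prn*{p + \nabla \bar H_{k,\mu}(x)} - \mu(x - x_k)$ and applying Cauchy--Schwarz together with the $S_{k,\mu}$ bound turns the inner product into a term bounded below by $(1-\theta)\mu\norm{x-x_k}^2$, giving the model decrease $g(x) + h(\ell) \le F(x_k) - (1-\theta)\mu\norm{x - x_k}^2$. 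Since $g(x) \ge g^*$, this also shows that $\Psi \coloneqq (g(x) - g^*) + (h(\ell) - h^*)$ satisfies $0 \le \Psi \le \Delta_k - (1-\theta)\mu\norm{x-x_k}^2$, where the nonnegativity uses that $g^*$ and $h^*$ are the infima of $g$ and $h$.

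Next I would control the linearization error $T \coloneqq h(c(x)) - h(\ell)$; if $T \le 0$ the claim is immediate, so assume $T > 0$. Convexity of $h$ (its tangent inequality at $c(x)$) and Cauchy--Schwarz give $T \le \norm{\nabla h(c(x))}\norm{c(x) - \ell}$. The Jacobian--Lipschitz estimate from \cref{sec:lemmas} bounds $\norm{c(x) - \ell} \le \frac{L_c}{2}\norm{x - x_k}^2 \eqqcolon s$, while the standard gradient-norm inequality for the convex, $L_h$-smooth function $h$ gives $\norm{\nabla h(c(x))}^2 \le 2 L_h (h(c(x)) - h^*) \le 2 L_h (\Psi + T)$, where I used $h(c(x)) - h^* \le (g(x) - g^*) + (h(c(x)) - h^*) = \Psi + T$. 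Combining produces the self-consistent inequality $T \le s\sqrt{2 L_h (\Psi + T)}$.

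The hard part will be closing this self-referential bound cleanly: the gradient is evaluated at $c(x)$, so the controlling quantity again contains $h(c(x))$, and detouring through $\nabla h(\ell)$ via Lipschitzness would leave a stray $O\prn*{\norm{x - x_k}^4}$ term that the damping budget cannot absorb once $\rho$ sits at its threshold. Instead I would read $T \le s\sqrt{2L_h(\Psi + T)}$ as a quadratic inequality in $T$ and solve it to obtain $T \le L_h s^2 + s\sqrt{L_h^2 s^2 + 2 L_h \Psi}$. The choice $\rho \ge \frac{L_c\sqrt{2L_h}}{1-\theta}$ in \cref{eq:rho_lowerbound} is exactly what makes $(1-\theta)\mu\norm{x-x_k}^2 \ge 2 s\sqrt{2 L_h \Delta_k}$, so the step-two bound sharpens to $\Psi \le \Delta_k - 2 s\sqrt{2 L_h \Delta_k}$; feeding this in and using $\Psi \ge 0$ (which forces $L_h s \le \sqrt{2 L_h \Delta_k}$) collapses the quadratic solution to $T \le s\sqrt{2 L_h \Delta_k} \le \frac{1-\theta}{2}\mu\norm{x - x_k}^2$. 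Adding the model decrease from step two then gives $F(x) = g(x) + h(\ell) + T \le F(x_k) - \frac{1-\theta}{2}\mu\norm{x - x_k}^2$, which is \cref{eq:descent}. The decisive ingredients are the nonnegativity of $\Psi$ --- guaranteed by \cref{asm:g_bounded,asm:h_bounded} --- and the damping rule $\mu = \rho\sqrt{\Delta_k}$ of \cref{eq:set_mu}, which together convert the apparent circularity into a genuine step-size control.
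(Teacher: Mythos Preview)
Your argument is correct. The model-decrease step is the same as the paper's (just with the damping term shuffled out of $\bar F_{k,\mu}$), but your treatment of the linearization error $T = h(c(x)) - h(\ell)$ differs genuinely from the paper's. The paper expands $T$ via the $L_h$-smoothness upper bound at $\ell$, obtaining $T \le \norm{\nabla h(\ell)} s + \tfrac{L_h}{2}s^2$, and then bounds $\norm{\nabla h(\ell)}$ through $h(\ell) - h^*$; the key trick there is the AM--GM step $2\sqrt{ab}-a\le b$, which produces a \emph{negative} $L_c L_h\norm{u}^2$ correction in the bound on $\norm{\nabla h(\ell)}$ that exactly cancels the quartic $\tfrac{L_h}{2}s^2$ term. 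You instead use only convexity of $h$ at $c(x)$ to get $T \le \norm{\nabla h(c(x))} s$, then bound $\norm{\nabla h(c(x))}^2 \le 2L_h(\Psi+T)$ and solve the resulting quadratic in $T$, exploiting $\Psi \ge 0$ to close it. Both routes land on the same threshold for $\rho$; yours avoids the delicate quartic cancellation at the cost of a self-referential inequality, while the paper's avoids the quadratic solve at the cost of that cancellation. One small remark: your aside that ``detouring through $\nabla h(\ell)$ via Lipschitzness would leave a stray $O(\norm{x-x_k}^4)$ term that the damping budget cannot absorb'' is not quite right---the paper takes exactly that detour and the quartic term is absorbed---but this does not affect your own argument.
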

\begin{proof}
  To simplify notation, let
  \begin{align}
    u \coloneqq x - x_k,\quad
    w \coloneqq c(x_k) + \nabla c(x_k) u.
  \end{align}
  By using $\bar F_{k, \mu}(x_k) = F(x_k)$, we decompose $F(x) - F(x_k)$ as
  \begin{alignat}{2}
    F(x) - F(x_k)
    =
    \prn*{\bar F_{k, \mu}(x) - \bar F_{k, \mu}(x_k)}
    + \prn*{F(x) - \bar F_{k, \mu}(x)}.
    \label{eq:FxFxk_diff_decomposition}
  \end{alignat}
  Each term can be bounded as follows:
  \begin{alignat}{2}
    \bar F_{k, \mu}(x) - \bar F_{k, \mu}(x_k)
    &\leq
    \bar{\omega}_{k, \mu}(x) \norm*{x - x_k} - \frac{\mu}{2} \norm{x - x_k}^2
    &\quad&\by{\cref{eq:proj-grad_norm_lowerbound_sub}}\\
    &\leq
    \theta \mu \norm*{x - x_k}^2 - \frac{\mu}{2} \norm{x - x_k}^2=
    - \frac{1 - 2 \theta}{2} \mu \norm{u}^2
    &\quad&\by{$x \in S_{k, \mu}$}
    \label{eq:hwhcxk_upperbound}
  \end{alignat}
  and
  \begin{alignat}{2}
    F(x) - \bar F_{k, \mu}(x)
    &=
    h(c(x)) - h(w)
    - \frac{\mu}{2} \norm*{u}^2\\
    &\leq
    \inner{\nabla h(w)}{c(x) - w} + \frac{L_h}{2} \norm*{c(x) - w}^2
    - \frac{\mu}{2} \norm*{u}^2
    &\quad&\by{\cref{eq:lem_three-points}}\\
    &\leq
    \norm*{\nabla h(w)} \norm*{c(x) - w}
    + \frac{L_h}{2} \norm*{c(x) - w}^2
    - \frac{\mu}{2} \norm*{u}^2.
    \label{eq:hcxhw_upperbound}
  \end{alignat}
  Furthermore, we have
  \begin{align}
    \norm*{c(x) - w}
    &=
    \norm*{ c(x) - c(x_k) - \nabla c(x_k) (x - x_k) }
    \leq
    \frac{L_c}{2} \norm*{x - x_k}^2
    = 
    \frac{L_c}{2} \norm*{u}^2,
    \label{eq:cxw_norm_upperbound}
  \end{align}
  where we use \cref{eq:property_nabla-c_lip}, and
  \begin{alignat}{2}
    &\mathInd
    \norm*{\nabla h(w)}
    - \frac{(1 - \theta) \mu}{2 L_c}\\
    &\leq
    \frac{L_c}{2 (1 - \theta) \mu} \norm*{\nabla h(w)}^2
    &\quad&\text{(since $2 \sqrt{ab} - a \leq b$ for $a,b \geq 0$)}\\
    &\leq
    \frac{L_c L_h}{(1 - \theta) \mu} \prn*{h(w) - h^*}
    &\quad&\by{\cref{eq:lem_smoothness_obj_grad-norm_bound}}\\
    &=
    \frac{L_c L_h}{(1 - \theta) \mu} \prn*{\bar F_{k, \mu}(x) - g(x) - h^* - \frac{\mu}{2}\norm*{u}^2}
    &\quad&\by{\cref{eq:def_Fbar}}\\
    &\leq
    \frac{L_c L_h}{(1 - \theta) \mu} \prn*{F(x_k) - g(x) -  h^* - (1 - \theta) \mu \norm{u}^2}
    &\quad&\by{\cref{eq:hwhcxk_upperbound} and $\bar F_{k, \mu}(x_k) = F(x_k)$}\\
    &=
    \frac{L_c L_h}{(1 - \theta) \mu} \prn*{F(x_k) - g(x) - h^*}
    - L_c L_h \norm{u}^2\\
    &\leq
    \frac{L_c L_h}{(1 - \theta) \mu} \prn*{F(x_k) - \prn*{g^* + h^*}}
    - L_c L_h \norm{u}^2
    &\quad&\by{\revise{\cref{eq:requirement_gast_hast}}}\\
    &\leq
    \frac{(1 - \theta) \mu}{2 L_c}
    - L_c L_h \norm{u}^2
    &\quad&\by{\cref{eq:set_mu,eq:rho_lowerbound}}.
    \label{eq:norm_gradhw_upperbound}
  \end{alignat}
  Plugging \cref{eq:cxw_norm_upperbound,eq:norm_gradhw_upperbound} into \cref{eq:hcxhw_upperbound}, we obtain
  \begin{align}
    F(x) - \bar F_{k, \mu}(x)
    &\leq
    \prn*{
      \frac{(1 - \theta) \mu}{L_c}
      - L_c L_h \norm{u}^2
    }
    \prn*{\frac{L_c}{2} \norm*{u}^2}
    + \frac{L_h}{2} \prn*{ \frac{L_c}{2} \norm*{u}^2 }^2
    - \frac{\mu}{2} \norm*{u}^2\\
    &=
    - \frac{\theta \mu}{2} \norm*{u}^2
    - \frac{3 L_c^2 L_h}{8} \norm*{u}^4
    \leq
    - \frac{\theta \mu}{2} \norm*{u}^2.
    \label{eq:FxFbarx_diff_upperbound}
  \end{align}
  The result follows from \cref{eq:FxFxk_diff_decomposition,eq:hwhcxk_upperbound,eq:FxFbarx_diff_upperbound}.
\end{proof}
\cref{lem:descent} immediately yields the following proposition.
\begin{proposition}
  \label{prop:rhomax}
  Suppose that \cref{asm:hc} holds.
  Let 
  \begin{align}
    \rho_{\max}
    \coloneqq
    \begin{dcases*}
      \alpha \frac{L_c \sqrt{2 L_h}}{1 - \theta} & if $\displaystyle \rho_{\min} < \frac{L_c \sqrt{2 L_h}}{1 - \theta}$,\\
      \rho_{\min} & otherwise.
    \end{dcases*}
    \label{eq:def_rhomax}
  \end{align}
  In \cref{alg:proposed_backtracking}, the parameter $\rho$ always satisfies 
  \begin{align}
    \rho_{\min} \leq \rho \leq \rho_{\max}.
    \label{eq:rho_minmax_bound}
  \end{align}
  Furthermore, \cref{alg-line-bt:unsuccessful} is executed at most
  \begin{align}
    \max \set*{
      \ceil*{
        \log_\alpha \prn*{
          \frac{L_c \sqrt{2 L_h}}{(1 - \theta) \rho_{\min}}
        }
      }, \, 
      0
    }
  \end{align}
  times, where $\ceil{\cdot}$ denotes the ceiling function.
\end{proposition}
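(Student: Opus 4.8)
The plan is to derive both claims as direct bookkeeping consequences of the descent lemma (\cref{lem:descent}), the key point being that the hypothesis \cref{eq:rho_lowerbound} is exactly the condition guaranteeing that the backtracking test in \cref{alg-line-bt:check_cond} passes. Write $\bar\rho \coloneqq \frac{L_c\sqrt{2L_h}}{1-\theta}$ for the threshold in \cref{eq:rho_lowerbound}, and note first that $\rho$ is initialized to $\rho_{\min}$ once, before the outer loop, and is thereafter only ever multiplied by $\alpha > 1$ in \cref{alg-line-bt:unsuccessful}. Hence $\rho$ is nondecreasing throughout the entire run and never drops below $\rho_{\min}$, which already establishes the lower bound in \cref{eq:rho_minmax_bound}.

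For the upper bound and the count I would invoke the contrapositive of \cref{lem:descent}: each time \cref{alg-line-bt:unsuccessful} is executed, the test in \cref{alg-line-bt:check_cond} has just failed, so \cref{eq:descent} does not hold for the computed $x \in S_{k,\mu}$; since $\mu$ is set as in \cref{eq:set_mu}, \cref{lem:descent} forces $\rho < \bar\rho$ at that moment. As each execution multiplies $\rho$ by $\alpha$, it follows that immediately after any such execution $\rho < \alpha\bar\rho$, while if \cref{alg-line-bt:unsuccessful} is never reached then $\rho = \rho_{\min}$. Splitting on the two cases in the definition \cref{eq:def_rhomax} of $\rho_{\max}$, namely $\rho_{\min} < \bar\rho$ versus $\rho_{\min} \geq \bar\rho$, then yields $\rho \leq \rho_{\max}$ in each case (in the second case no execution ever occurs, so $\rho = \rho_{\min} = \rho_{\max}$).

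To bound the number of executions, I would track $\rho$ as a function of the total count $m$ of calls to \cref{alg-line-bt:unsuccessful}: since $\rho$ persists across the outer iterations $k$, after $m$ calls one has $\rho = \alpha^m \rho_{\min}$. An $m$-th call can occur only while $\rho = \alpha^{m-1}\rho_{\min} < \bar\rho$, i.e. while $m - 1 < \log_\alpha(\bar\rho/\rho_{\min})$; taking the largest integer consistent with this and intersecting with the trivial bound $m \geq 0$ gives exactly $m \leq \max\{\lceil \log_\alpha(\bar\rho / \rho_{\min})\rceil, 0\}$, the claimed bound.

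This is essentially a counting corollary of \cref{lem:descent}, so I do not expect a genuine obstacle. The one point requiring care is that $\rho$ is a persistent variable shared across all outer iterations rather than being reset for each $k$: once $\rho \geq \bar\rho$ the test in \cref{alg-line-bt:check_cond} succeeds forever after, so \cref{alg-line-bt:unsuccessful} is never reached again, and this is precisely what makes $m$ a finite global count instead of a per-iteration quantity. The remaining delicacy is the ceiling-and-maximum arithmetic reconciling the non-integer threshold $\log_\alpha(\bar\rho/\rho_{\min})$ with the discrete count $m$ in both the $\rho_{\min} < \bar\rho$ and $\rho_{\min} \geq \bar\rho$ regimes.
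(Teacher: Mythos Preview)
Your proposal is correct and matches the paper's approach: the paper itself does not spell out a proof, stating only that \cref{lem:descent} ``immediately yields'' the proposition, and your bookkeeping argument via the contrapositive of the descent lemma is exactly the intended derivation.
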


  Lemmas that evaluate the objective function decrease in one iteration, such as \cref{lem:descent}, are called descent lemmas or sufficient decrease lemmas and are often used in the complexity analysis of optimization methods.
  Descent lemmas similar to \cref{lem:descent} have been shown to analyze generalized LM methods (e.g., \citep[Eq.~(25)]{drusvyatskiy2018error} and \citep[Eq.~(5.2)]{drusvyatskiy2019efficiency}) under the assumption of Lipschitz continuous $h$ and $\nabla c$.
  They require $\mu_k$ to be set above a certain constant, prohibiting $(\mu_k)_{k \in \N}$ from converging to $0$.
  Meanwhile, existing LM methods that enjoy local quadratic convergence under the quadratic growth condition make $(\mu_k)_{k \in \N}$ converge to $0$ \citep{yamashita2001rate,fan2005quadratic,fan2006convergence,dan2002convergence,fischer2010inexactness,bellavia2014strong,bellavia2010convergence,kanzow2004levenberg,behling2012unified,facchinei2013family}.
  This conflict implies that it is difficult or impossible to guarantee local quadratic convergence under the quadratic growth, as long as we use the existing descent lemmas.

  Our descent lemma overcomes the difficulty.
  \cref{lem:descent} allows $(\mu_k)_{k \in \N}$ to converge to $0$ when $(\Delta_k)_{k \in \N}$ defined in \cref{eq:def_Deltak_Fast} converges to $0$, which enables us to achieve both small oracle complexity and local quadratic convergence.
  This is accomplished by exploiting the Lipschitz continuity and convexity of $\nabla h$, while the existing descent lemmas mentioned above use the Lipschitz continuity of $h$.

\subsection{Accelerated method for subproblem}
In each iteration of \cref{alg:proposed_backtracking}, we have to find an approximate solution $x \in S_{k, \mu}$ to subproblem~\cref{eq:subproblem}.
Such a point can be obtained by the accelerated proximal gradient (APG) method, \cref{alg:apg}, which is \citep[Algorithm~31]{daspremont2021acceleration} tailored to our setting.

\begin{algorithm}[t]
  \caption{Accelerated proximal gradient for subproblem~\cref{eq:subproblem}}
  \label{alg:apg}
  \KwIn{%
    $k \in \N$, 
    $\mu > 0$, 
    $0 < \theta < 1$,
    $0 < \bar \beta < 1 < \bar \alpha$
  }
  \KwOut{A point $x \in S_{k, \mu}$}
  $\bar x_0 \gets x_k$, \ 
  $z_0 \gets x_k$\;
  $\eta \gets \bar \alpha \mu$, \ 
  $b_0 \gets 0$\;
  \label{alg-line-apg:initialize_etab0}
  \For{$t = 0,1,\dots$}{
    $b_{t+1} \gets \frac{1 + 2 \eta b_t + \sqrt{1 + 4 \eta b_t (1 + \mu b_t) }}{2 (\eta - \mu)}$, \ 
    $\tau \gets \frac{(b_{t+1} - b_t) (1 + \mu b_t)}{b_{t+1} (1 + \mu b_t) + \mu b_t (b_{t+1} - b_t)}$\;
    \label{alg-line-apg:set_Btau}
    $y_t \gets \bar x_t + \tau (z_t - \bar x_t)$\;
    \label{alg-line-apg:set_y}
    $\bar x_{t+1} \gets \prox_{\revise{\eta^{-1}} g} \prn*{y_t - \frac{1}{\eta} \nabla \bar H_{k, \mu}(y_t)}$\;
    \label{alg-line-apg:set_xbar}
    \If{$\bar H_{k, \mu}(\bar x_{t+1}) \leq \bar H_{k, \mu}(y_t) + \inner{\nabla \bar H_{k, \mu}(y_t)}{\bar x_{t+1} - y_t} + \frac{\eta}{2} \norm*{\bar x_{t+1} - y_t}^2$ does not hold}{
      \label{alg-line-apg:backtracking}
      $\eta \gets \bar \alpha \eta$\;
      \label{alg-line-apg:increase_eta}
      \Goto{\cref{alg-line-apg:set_Btau}}
    }
    \If{$\norm*{\nabla \bar H_{k, \mu}(\bar x_{t+1}) - \nabla \bar H_{k, \mu}(y_t) - \eta (\bar x_{t+1} - y_t)} \leq \theta \mu \norm*{\bar x_{t+1} - \bar x_0}$}{
      \label{alg-line-apg:termination_cond}
      \Return{$\bar x_{t+1}$}
    }
    $\phi \gets \frac{b_{t+1} - b_t}{1 + \mu b_{t+1}}$\;
    $z_{t+1} \gets (1 - \mu \phi) z_t + \mu \phi y_t + \eta \phi (\bar x_{t+1} - y_t)$\;
    $\eta \gets \bar \beta \eta$\;
  }
\end{algorithm}

Let us verify that \cref{alg:apg} outputs a solution $x \in S_{k, \mu}$ when it terminates.
From \cref{alg-line-apg:set_xbar}, we have
\begin{align}
  - \prn*{ \nabla \bar H_{k, \mu}(y_t) + \eta (\bar x_{t+1} - y_t) } \in \partial g(\bar x_{t+1}),
  \label{eq:in_partialgx}
\end{align}
which implies that
\begin{align}
  \bar{\omega}_{k, \mu}(\bar x_{t+1})
  &= 
  \min_{p \in \partial g(\bar x_{t+1})} \norm*{p + \nabla \bar H_{k, \mu}(\bar x_{t+1})}\\
  &\leq
  \norm*{\nabla \bar H_{k, \mu}(\bar x_{t+1}) - \nabla \bar H_{k, \mu}(y_t) - \eta (\bar x_{t+1} - y_t)}.
  \label{eq:baromega_upperbound}
\end{align}
This bound shows that when the termination condition in \cref{alg-line-apg:termination_cond} is satisfied, the inequality
\begin{align}
  \bar{\omega}_{k, \mu}(\bar x_{t+1})
  \leq
  \theta \mu \norm*{\bar x_{t+1} - \bar x_0}
  =
  \theta \mu \norm*{\bar x_{t+1} - x_k}
\end{align}
holds and hence $\bar x_{t+1} \in S_{k, \mu}$.

Next, we will show the iteration and oracle complexity bounds of \cref{alg:apg}.
We assume the boundedness of $\normop*{\nabla c(x_k)}$ to derive the complexity, which is also a standard assumption for LM methods \citep{ueda2010global,zhao2016global,bergou2020convergence,marumo2023majorization,drusvyatskiy2019efficiency}.
\begin{assumption}
  \label{asm:nablac_bound}
  Let $(x_k)_{k \in \N}$ be generated by \cref{alg:proposed_backtracking}.
  For some $\sigma > 0$,
  \begin{align}
    \normop*{\nabla c(x_k)} \leq \sigma,\quad
    \forall k \in \N.
  \end{align}
\end{assumption}
The complexity is expressed using the Lipschitz constant of $\nabla \bar H_{k, \mu}$ shown in the following lemma.
\begin{lemma}
  \label{lem:smooth_subproblem}
  Suppose that \cref{asm:hc,asm:nablac_bound} hold.
  Then, $\nabla \bar H_{k, \mu}$ is $\bar L$-Lipschitz continuous, where
  \begin{align}
    \bar L
    \coloneqq
    \mu + L_h \sigma^2.
    \label{eq:def_Lbar}
  \end{align}
\end{lemma}
\begin{proof}
  See \cref{sec:proof_smooth_subproblem}.
\end{proof}
\begin{lemma}[Iteration complexity of \cref{alg:apg}]
  \label{lem:apg_iteration}
  Suppose that \cref{asm:hc,asm:nablac_bound} hold.
  Then, \cref{alg:apg} terminates within
  \begin{align}
    2 \sqrt{\bar \alpha \bar \kappa} \log \prn*{
      2 \sqrt{\bar \alpha \bar \kappa}
      \prn*{
        1 + \frac{2 (1 + \bar \alpha) \bar \kappa}{\theta}
      }
    }
    =
    O\prn*{
      1 + \sqrt{\bar \kappa} \log \bar \kappa
    }
    \label{eq:inner_iteration_complexity}
  \end{align}
  iterations, where
  \begin{align}
    \bar \kappa
    \coloneqq
    \frac{\bar L}{\mu}
    =
    1 + \frac{L_h \sigma^2}{\mu}
    \label{eq:def_kappa}
  \end{align}
  is the condition number of $\bar H_{k, \mu}$.\footnote{
    The $O$-notation in \cref{eq:inner_iteration_complexity} shows the behavior in the limit with respect to $\bar \kappa$.
    Since $\sqrt{\bar \kappa} \log \bar \kappa \to 0$ if $\bar \kappa \to 1$, the term $O(1)$ in \cref{eq:inner_iteration_complexity} cannot be omitted.
  }
\end{lemma}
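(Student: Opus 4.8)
The plan is to couple the standard linear-convergence guarantee of the underlying accelerated proximal gradient scheme with a conversion of the resulting objective-gap bound into the termination test of \cref{alg-line-apg:termination_cond}. Throughout, write $\hat x$ for the unique minimizer of the $\mu$-strongly convex subproblem $\bar F_{k, \mu}$ and set $D \coloneqq \norm{\hat x - x_k}$. The first step is to pin down the step-size: by \cref{lem:smooth_subproblem} the descent inequality tested in \cref{alg-line-apg:backtracking} holds as soon as $\eta \geq \bar L$, so the backtracking in \cref{alg-line-apg:increase_eta} stops before $\eta$ exceeds $\bar\alpha \bar L$, and the initialization $\eta = \bar\alpha \mu$ together with the dynamics keeps $\eta$ above $\mu$ so that the recursion for $b_{t+1}$ is well defined. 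Hence at every termination check $\bar H_{k, \mu}$ may be treated as an $\eta$-smooth, $\mu$-strongly convex function with $\eta / \mu \leq \bar\alpha \bar\kappa$.

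Next I would invoke the estimate-sequence analysis of the scheme (the specialization of \citep[Algorithm~31]{daspremont2021acceleration}) to obtain a potential $V_t$ that majorizes both $\bar F_{k, \mu}(\bar x_t) - \bar F_{k, \mu}(\hat x)$ and $\frac{\mu}{2} \norm{z_t - \hat x}^2$ and contracts with per-step factor $1 - 1/\sqrt{\bar\alpha \bar\kappa}$ (each step multiplies by $1 - \sqrt{\mu / \eta_t}$, and $\eta_t \leq \bar\alpha \bar L$). Passing from the objective gap to iterate distances via $\mu$-strong convexity then yields geometric bounds of the form $\max\set{\norm{\bar x_t - \hat x}, \norm{z_t - \hat x}} \leq C \sqrt{\bar\kappa}\, (1 - 1/\sqrt{\bar\alpha \bar\kappa})^{t/2} D$ for an explicit constant $C$; since $y_t = \bar x_t + \tau (z_t - \bar x_t)$ with $\tau \in [0, 1]$, the same geometric bound controls $\norm{y_t - \hat x}$.

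The crucial conversion is to bound the residual tested in \cref{alg-line-apg:termination_cond}. Using that $\nabla \bar H_{k, \mu}$ is $\bar L$-Lipschitz (\cref{lem:smooth_subproblem}) and $\eta \leq \bar\alpha \bar L$,
\begin{align}
  \norm{\nabla \bar H_{k, \mu}(\bar x_{t+1}) - \nabla \bar H_{k, \mu}(y_t) - \eta (\bar x_{t+1} - y_t)}
  &\leq (\bar L + \eta) \norm{\bar x_{t+1} - y_t}\\
  &\leq (1 + \bar\alpha) \bar L \, \norm{\bar x_{t+1} - y_t},
\end{align}
whereas the right-hand side of the test satisfies $\theta \mu \norm{\bar x_{t+1} - x_k} \geq \theta \mu (D - \norm{\bar x_{t+1} - \hat x})$ by the triangle inequality. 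Dividing by $\mu$ and recalling $\bar L = \bar\kappa \mu$, the termination inequality is implied once $2 (1 + \bar\alpha) \bar\kappa^{3/2} C\, (1 - 1/\sqrt{\bar\alpha \bar\kappa})^{t/2} \lesssim \theta$, where I have used the geometric bounds to dominate both $\norm{\bar x_{t+1} - y_t}$ and the error term in the lower bound of the right-hand side. Solving this for $t$ with the elementary estimate $\abs{\log(1 - 1/\sqrt{\bar\alpha \bar\kappa})} \geq 1/\sqrt{\bar\alpha \bar\kappa}$ gives a bound of the claimed shape $2 \sqrt{\bar\alpha \bar\kappa} \log(2 \sqrt{\bar\alpha \bar\kappa} (1 + 2 (1 + \bar\alpha) \bar\kappa / \theta))$; the degenerate case $D = 0$ is treated separately, since then $x_k = \hat x$ is already optimal, the first prox step is a fixed point, and the method returns at $t = 0$.

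I expect the main obstacle to be the precise constant bookkeeping rather than the overall architecture: the factor $2$ in front of $\sqrt{\bar\alpha \bar\kappa}$ must come out of the square root when converting the objective gap into an iterate distance, and the exact polynomial $1 + 2(1 + \bar\alpha) \bar\kappa / \theta$ inside the logarithm has to be recovered by carefully tracking the potential's initial value and the amplification constant $C$ through the strong-convexity conversion, so that the $O(1)$ term survives in the limit $\bar\kappa \to 1$ as the footnote requires. Secondary care is needed to justify, from the backtracking rule alone, the invariant $\mu < \eta \leq \bar\alpha \bar L$ used to control the rate.
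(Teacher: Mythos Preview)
Your plan is correct and mirrors the paper's proof almost exactly: both bound the left-hand side of the termination test by $(1+\bar\alpha)\bar L\,\norm{\bar x_{t+1}-y_t}$, control $\norm{\bar x_{t+1}-\hat x}$, $\norm{\bar x_t-\hat x}$, $\norm{z_t-\hat x}$ geometrically via the estimate-sequence bound $\bar F_{k,\mu}(\bar x_t)-\bar F_{k,\mu}(\hat x)+\tfrac{\mu}{2}\norm{z_t-\hat x}^2 \le \bar\alpha\bar\kappa\mu(1-1/\sqrt{\bar\alpha\bar\kappa})^t D^2$, and then use the triangle inequality $\norm{\bar x_{t+1}-x_k}\ge D-\norm{\bar x_{t+1}-\hat x}$ (the paper writes this as $D\le (1-p_t)^{-1}\norm{\bar x_{t+1}-\bar x_0}$, which is the same rearranged) to compare both sides. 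The only cosmetic difference is that the paper splits $y_t=(1-\tau)\bar x_t+\tau z_t$ into three distance terms rather than bounding $\norm{y_t-\hat x}$ directly; your anticipated constant bookkeeping will recover the stated bound once you track $C=2\sqrt{\bar\alpha}$ from the potential's initial value.
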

\begin{proof}
  See \cref{sec:proof_apg_subproblem}.
\end{proof}

\cref{alg:apg} fails to update the solution, i.e., reaches \cref{alg-line-apg:increase_eta}, at most
\begin{align}
  \ceil*{
    \log_{\bar \alpha} \prn*{
      \frac{\bar L}{\bar \alpha \mu}
    }
  }
  = O \prn*{ \log \bar \kappa }
\end{align}
times.
Putting together this and the iteration complexity in \cref{lem:apg_iteration}, we obtain the following oracle complexity bound of \cref{alg:apg}.
The oracle complexity bound for the subproblem will be used to derive the overall oracle complexity of \cref{alg:proposed_backtracking} combined with \cref{alg:apg} in the next section.
\begin{proposition}[Oracle complexity of \cref{alg:apg}]
  \label{prop:complexity_per_iteration}
  Suppose that \cref{asm:hc,asm:nablac_bound} hold.
  \cref{alg:apg} outputs a solution $x \in S_{k, \mu}$ after calling the oracles
  \begin{align}
    O \prn*{ 1 + \sqrt{\bar \kappa} \log \bar \kappa }
  \end{align}
  times, where $\bar \kappa$ is defined in \cref{eq:def_kappa} and depends on $\mu$.
\end{proposition}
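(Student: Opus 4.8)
The plan is to bound the total number of oracle calls as the product of (i) the number of passes through the for-loop body of \cref{alg:apg} and (ii) the constant number of oracle calls incurred on each such pass, and then to invoke \cref{lem:apg_iteration} for the pass count. First I would verify that each loop pass costs only $O(1)$ oracle calls. The only oracle-dependent operations are the evaluations of $\bar H_{k,\mu}$ and $\nabla \bar H_{k,\mu}$ at $y_t$ and $\bar x_{t+1}$ (appearing in \cref{alg-line-apg:set_xbar,alg-line-apg:backtracking,alg-line-apg:termination_cond}) together with the proximal step in \cref{alg-line-apg:set_xbar}: evaluating $\bar H_{k,\mu}$ takes one Jacobian-vector product $\nabla c(x_k)u$ and one call to $h$; evaluating $\nabla \bar H_{k,\mu}$ takes one $\nabla c(x_k)u$, one $\nabla h$, and one $\nabla c(x_k)^\top v$; and the prox takes one $\prox_{\eta g}$ call. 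All of these are constant in number, whether the pass ends by updating the solution or by reaching \cref{alg-line-apg:increase_eta}, so it suffices to count loop passes.

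Next I would split the passes into successful updates and backtracking failures (those reaching \cref{alg-line-apg:increase_eta}). The number of successful updates is exactly the iteration count, which \cref{lem:apg_iteration} bounds by $O(1 + \sqrt{\bar \kappa}\log \bar \kappa)$. For the failures, \cref{lem:smooth_subproblem} shows that $\nabla \bar H_{k,\mu}$ is $\bar L$-Lipschitz, hence the descent inequality checked in \cref{alg-line-apg:backtracking} is guaranteed to hold once $\eta \geq \bar L$; since $\eta$ starts at $\bar \alpha \mu$ and is multiplied by $\bar \alpha > 1$ at each failure, the number of failures is $O\bigl(\log_{\bar \alpha}(\bar L/\mu)\bigr) = O(\log \bar \kappa)$, with $\bar \kappa = \bar L/\mu$ as in \cref{eq:def_kappa}.

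Combining these and multiplying by the $O(1)$ per-pass oracle cost gives a total of $O(1 + \sqrt{\bar \kappa}\log \bar \kappa) + O(\log \bar \kappa) = O(1 + \sqrt{\bar \kappa}\log \bar \kappa)$, where the final identity uses that $\log \bar \kappa$ is dominated by $1 + \sqrt{\bar \kappa}\log \bar \kappa$ both as $\bar \kappa \to 1$ (where $\log \bar \kappa \to 0$, absorbed by the $O(1)$ term) and as $\bar \kappa \to \infty$. The main subtlety is the failure count: because $\eta$ is also shrunk by the factor $\bar \beta < 1$ at the end of each successful iteration, $\eta$ may repeatedly dip below $\bar L$, so a fully rigorous count requires a potential argument tracking the net multiplicative change of $\eta$ over the whole run; this only adds a further $O(T)$ term with $T = O(1 + \sqrt{\bar \kappa}\log \bar \kappa)$ the iteration count, which is already absorbed into the final bound and hence does not change the conclusion.
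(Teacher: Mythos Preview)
Your proposal is correct and follows essentially the same approach as the paper: split the loop passes into successful iterations (bounded by \cref{lem:apg_iteration}) and backtracking failures (bounded via the Lipschitz constant $\bar L$ from \cref{lem:smooth_subproblem}), then sum. In fact you are more careful than the paper on one point: the paper states the failure count as $\ceil{\log_{\bar\alpha}(\bar L/(\bar\alpha\mu))}=O(\log\bar\kappa)$ without explicitly addressing the $\bar\beta$-shrinkage of $\eta$ after each success, whereas you correctly observe that a potential argument is needed and that it contributes only an additional $O(T)$ term already absorbed in the final bound.
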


Checking the termination condition in \cref{alg-line-apg:termination_cond} of \cref{alg:apg} requires an additional gradient computation of $\nabla \bar H_{k, \mu}(\bar x_{t+1})$, while
the gradient $\nabla \bar H_{k, \mu}(y_t)$ is already computed in \cref{alg-line-apg:set_xbar}.
To reduce this cost, we can check the termination condition every $T$ iterations.
Although this modification does not affect the order of oracle complexity of \cref{alg:apg}, it is helpful in practice.

While this section describes APG to compute a point in $S_{k, \mu}$, we could use another suitable algorithm that takes advantage of problem structures.
For example, for least-squares problem without regularization (i.e., when $g(\cdot) = 0$ and $h(\cdot) = \frac{1}{2}\norm*{\cdot}^2$), the conjugate gradient (CG) method is probably the best choice for solving the subproblem.
The CG method achieves linear convergence for strongly convex quadratic functions, and its convergence rate is better than that of APG (see e.g., \citep[Eq.~(5.36)]{nocedal2006numerical}).

\section{Iteration and oracle complexity}
\label{sec:global_convergence}
We will derive the iteration complexity of \cref{alg:proposed_backtracking} and the oracle complexity of \cref{alg:proposed_backtracking} combined with \cref{alg:apg}.

\subsection{Notation}
Let $(x_k)_{k \in \N}$ and $(\mu_k)_{k \in \N}$ be generated by \cref{alg:proposed_backtracking}, and define $\Delta_k$ by \cref{eq:def_Deltak_Fast}.
Then, it follows from \cref{eq:descent_k} that
\begin{align}
  \Delta_{k+1} \leq \Delta_k,
  \label{eq:Delta_decreasing}
\end{align}
and it follows from \cref{alg-line-bt:set_lambda} of \cref{alg:proposed_backtracking} and \cref{prop:rhomax} that
\begin{align}
  \rho_{\min} \sqrt{\Delta_k}
  \leq 
  \mu_k
  \leq
  \rho_{\max} \sqrt{\Delta_k}.
  \label{eq:muk_lowerupperbounds}
\end{align}
We assume that $\Delta_k > 0$ for all $k \in \N$ without loss of generality because $\Delta_k = 0$ for some $k$ means that the algorithm finds the optimal solution in a finite number of steps, and the complexity is $O(1)$.

\subsection{Convergence and complexity results}
First, we show the following lemma.
\begin{lemma}
  \label{lem:gradMF_norm_upperbound}
  Suppose that \cref{asm:hc,asm:nablac_bound} hold.
  For the sequences $(x_k)_{k \in \N}$ and $(\mu_k)_{k \in \N}$ generated by \cref{alg:proposed_backtracking}, the following holds for all $k \in \N$ and \revise{$x \in \R^d$ such that $\bar F_{k, \mu_k}(x) \leq F(x_k)$}:
  \begin{align}
    \omega(x)
    &\leq
    \bar{\omega}_{k, \mu_k}(x)
    + \prn*{ \mu_k + L_c \sqrt{2 L_h \Delta_k} } \norm*{x - x_k}
    + \frac{L_c L_h \sigma}{2} \norm*{x - x_k}^2
    \revise{{}+ \frac{L_c^2 L_h}{2} \norm*{x - x_k}^3,}
    \label{eq:gradMF_norm_upperbound_tighter}
  \end{align}
  where $\omega(x)$ and $\bar \omega_{k, \mu_k}(x)$ are defined in \cref{eq:def_error}.
  \revise{
  Furthermore, the following holds for all $k \in \N$:
  \begin{align}
    \omega(x_{k+1})
    &\leq
    \prn*{ (1 + \theta) \mu_k + L_c \sqrt{2 L_h \Delta_k} } \norm*{x_{k+1} - x_k}
    + \frac{L_c L_h \sigma}{2} \norm*{x_{k+1} - x_k}^2.
    \label{eq:gradMF_norm_upperbound}
  \end{align}%
  }%
\end{lemma}
\begin{proof}
  See \cref{sec:proof_gradMF_norm}.
\end{proof}

\Cref{eq:gradMF_norm_upperbound} shows that if $x_{k+1}$ is sufficiently close to $x_k$, then $x_{k+1}$ is an $\epsilon$-stationary point.
Combining this bound with \cref{eq:descent_k} yields the following lemma.

\begin{lemma}
  \label{lem:optimality_upperbound}
  Suppose that \cref{asm:hc,asm:nablac_bound} hold.
  Let $(x_k)_{k \in \N}$ and $(\mu_k)_{k \in \N}$ be generated by \cref{alg:proposed_backtracking}.
  Then, for each $k \in \N$, at least one of the following holds:  
  \begin{align}
    \omega(x_{k+1})^2
    \leq
    \frac{8}{1 - \theta}
    \prn*{
      (1 + \theta) \sqrt{\rho_{\max}}
      + \frac{L_c \sqrt{2 L_h}}{\sqrt{\rho_{\min}}}
    }^2
    \sqrt{\Delta_k}
    \prn*{
      \Delta_k - \Delta_{k+1}
    },
    \label{eq:optimality_upperbound1}
  \end{align}
  or
  \begin{align}
    \omega(x_{k+1})
    \leq
    \frac{4 L_c L_h \sigma}{(1 - \theta) \rho_{\min}}
    \prn*{\sqrt{\Delta_k} - \sqrt{\Delta_{k+1}}}.
    \label{eq:optimality_upperbound2}
  \end{align}
\end{lemma}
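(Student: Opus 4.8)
The plan is to combine the optimality bound \cref{eq:gradMF_norm_upperbound} with the sufficient-decrease inequality \cref{eq:descent_k}, and then split into two cases depending on which of the two terms in \cref{eq:gradMF_norm_upperbound} is larger. Write $s \coloneqq \norm{x_{k+1} - x_k}$, and abbreviate \cref{eq:gradMF_norm_upperbound} as $\omega(x_{k+1}) \leq A s + B s^2$ with $A \coloneqq (1 + \theta)\mu_k + L_c\sqrt{2 L_h \Delta_k}$ and $B \coloneqq \frac{L_c L_h \sigma}{2}$. The descent inequality \cref{eq:descent_k}, rewritten in terms of $\Delta_k = F(x_k) - (g^* + h^*)$, gives the key bound $s^2 \leq \frac{2 (\Delta_k - \Delta_{k+1})}{(1 - \theta)\mu_k}$, which is what converts powers of $s$ into the decrease $\Delta_k - \Delta_{k+1}$.

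First I would treat the linear-dominant case $A s \geq B s^2$, which yields $\omega(x_{k+1}) \leq 2 A s$, hence $\omega(x_{k+1})^2 \leq 4 A^2 s^2 \leq \frac{8 A^2}{(1 - \theta)\mu_k}(\Delta_k - \Delta_{k+1})$. The crux here is to show $\frac{A^2}{\mu_k} \leq C^2 \sqrt{\Delta_k}$, where $C \coloneqq (1 + \theta)\sqrt{\rho_{\max}} + \frac{L_c \sqrt{2 L_h}}{\sqrt{\rho_{\min}}}$ is the constant appearing in \cref{eq:optimality_upperbound1}. This follows by bounding each summand of $A$ by $C \Delta_k^{1/4}\mu_k^{1/2}$ separately: the upper bound $\mu_k \leq \rho_{\max}\sqrt{\Delta_k}$ in \cref{eq:muk_lowerupperbounds} handles the term $(1 + \theta)\mu_k$, while the lower bound $\mu_k \geq \rho_{\min}\sqrt{\Delta_k}$ handles $L_c\sqrt{2 L_h \Delta_k}$. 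Summing and squaring gives $A^2/\mu_k \leq C^2\sqrt{\Delta_k}$, and substituting recovers exactly \cref{eq:optimality_upperbound1}.

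For the quadratic-dominant case $B s^2 \geq A s$, I would use $\omega(x_{k+1}) \leq 2 B s^2 = L_c L_h \sigma\, s^2$, then apply the $s^2$ bound together with $\mu_k \geq \rho_{\min}\sqrt{\Delta_k}$ to obtain $\omega(x_{k+1}) \leq \frac{2 L_c L_h \sigma}{(1 - \theta)\rho_{\min}}\cdot\frac{\Delta_k - \Delta_{k+1}}{\sqrt{\Delta_k}}$. The remaining step is to recognize $\Delta_k - \Delta_{k+1} = (\sqrt{\Delta_k} - \sqrt{\Delta_{k+1}})(\sqrt{\Delta_k} + \sqrt{\Delta_{k+1}})$ and bound $\sqrt{\Delta_k} + \sqrt{\Delta_{k+1}} \leq 2\sqrt{\Delta_k}$ using $\Delta_{k+1} \leq \Delta_k$ from \cref{eq:Delta_decreasing}; this converts $\frac{\Delta_k - \Delta_{k+1}}{\sqrt{\Delta_k}}$ into $2(\sqrt{\Delta_k} - \sqrt{\Delta_{k+1}})$ and yields \cref{eq:optimality_upperbound2}.

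The computations are routine once the case split is in place; the only genuine decisions are the bookkeeping of which of the two $\mu_k$-bounds in \cref{eq:muk_lowerupperbounds} to apply to each summand of $A$, and the difference-of-squares manipulation needed to produce the factor $\sqrt{\Delta_k} - \sqrt{\Delta_{k+1}}$ (rather than $\Delta_k - \Delta_{k+1}$) demanded by \cref{eq:optimality_upperbound2}. I expect this last manipulation to be the step most easily overlooked, since it is precisely what makes the telescoping in the subsequent local-convergence analysis work.
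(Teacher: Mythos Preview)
Your proposal is correct and follows essentially the same approach as the paper's own proof: the same case split on which term of \cref{eq:gradMF_norm_upperbound} dominates, the same use of the descent inequality \cref{eq:descent_k} to convert $\norm{x_{k+1}-x_k}^2$ into $\Delta_k-\Delta_{k+1}$, the same application of the two-sided bound \cref{eq:muk_lowerupperbounds} to each summand of $A$, and the same difference-of-squares manipulation in the second case. The only cosmetic difference is that the paper factors $A\norm{x_{k+1}-x_k}$ as $\bigl((1+\theta)\sqrt{\mu_k/\sqrt{\Delta_k}}+L_c\sqrt{2L_h}\sqrt{\sqrt{\Delta_k}/\mu_k}\bigr)\sqrt{\sqrt{\Delta_k}\,\mu_k\norm{x_{k+1}-x_k}^2}$ before bounding, whereas you bound $A/\sqrt{\mu_k}$ directly; the algebra is identical.
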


\begin{proof}
  From definition \cref{eq:def_Deltak_Fast} of $\Delta_k$, we have
  \begin{align}
    F(x_k) - F(x_{k+1})
    = \Delta_k - \Delta_{k+1}.
    \label{eq:diff_F_equal_diff_Delta}
  \end{align}
  If the first term of the right-hand side of \cref{eq:gradMF_norm_upperbound} is larger than the second term, we have
  \begin{alignat}{2}
    &\mathInd
    \omega(x_{k+1})^2\\
    &\leq
    4 \prn*{ (1 + \theta) \mu_k + L_c \sqrt{2 L_h \Delta_k} } \norm*{x_{k+1} - x_k}^2\\
    &=
    4 \prn*{
      (1 + \theta) \sqrt{\frac{\mu_k}{\sqrt{\Delta_k}}}
      + L_c \sqrt{2 L_h} \sqrt{\frac{\sqrt{\Delta_k}}{\mu_k}}
    }^2
    \sqrt{\Delta_k}
    \mu_k
    \norm*{x_{k+1} - x_k}^2\\
    &\leq
    4 \prn*{
      (1 + \theta) \sqrt{\rho_{\max}}
      + \frac{L_c \sqrt{2 L_h}}{\sqrt{\rho_{\min}}}
    }^2
    \sqrt{\Delta_k}
    \mu_k
    \norm*{x_{k+1} - x_k}^2
    &\quad&\by{\cref{eq:muk_lowerupperbounds}}\\
    &\leq
    4 \prn*{
      (1 + \theta) \sqrt{\rho_{\max}}
      + \frac{L_c \sqrt{2 L_h}}{\sqrt{\rho_{\min}}}
    }^2
    \sqrt{\Delta_k}
    \frac{2}{1 - \theta}
    \prn*{
      \Delta_k - \Delta_{k+1}
    }
    &\quad&\by{\cref{eq:descent_k,eq:diff_F_equal_diff_Delta}},
  \end{alignat}
  which yields \cref{eq:optimality_upperbound1}.
  Otherwise, we obtain \cref{eq:optimality_upperbound2} as follows:
  \begin{alignat}{2}
    \omega(x_{k+1})
    &\leq
    L_c L_h \sigma \norm*{x_{k+1} - x_k}^2\\
    &\leq
    \frac{2 L_c L_h \sigma}{(1 - \theta) \mu_k} \prn*{\Delta_k - \Delta_{k+1}}
    &\quad&\by{\cref{eq:descent_k,eq:diff_F_equal_diff_Delta}}\\
    &=
    \frac{2 L_c L_h \sigma}{(1 - \theta) \mu_k}
    \prn*{\sqrt{\Delta_k} + \sqrt{\Delta_{k+1}}}
    \prn*{\sqrt{\Delta_k} - \sqrt{\Delta_{k+1}}}\\
    &\leq
    \frac{2 L_c L_h \sigma}{(1 - \theta) \mu_k}
    2 \sqrt{\Delta_k}
    \prn*{\sqrt{\Delta_k} - \sqrt{\Delta_{k+1}}}
    &\quad&\by{\cref{eq:Delta_decreasing}}\\
    &\leq
    \frac{4 L_c L_h \sigma}{(1 - \theta) \rho_{\min}}
    \prn*{\sqrt{\Delta_k} - \sqrt{\Delta_{k+1}}}
    &\quad&\by{\cref{eq:muk_lowerupperbounds}},
  \end{alignat}
  which completes the proof.
\end{proof}

Since the sequence $(\Delta_k)_{k \in \N}$ is nonincreasing and lower-bounded, it converges to some value, and therefore the right-hand side of \cref{eq:optimality_upperbound1,eq:optimality_upperbound2} converges to $0$.
Thus, we obtain the global convergence property of \cref{alg:proposed_backtracking}.
\begin{proposition}[Global convergence]
  \label{prop:global_convergence}
  Suppose that \cref{asm:hc,asm:nablac_bound} hold.
  For the sequence $(x_k)_{k \in \N}$ generated by \cref{alg:proposed_backtracking}, the following holds:
  \begin{align}
    \lim_{k \to \infty} \omega(x_k) = 0.
    \label{eq:global_convergence}
  \end{align}
\end{proposition}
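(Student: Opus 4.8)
The plan is to exploit the dichotomy established in \cref{lem:optimality_upperbound}: for each $k$, either the ``squared'' bound \cref{eq:optimality_upperbound1} or the ``linear'' bound \cref{eq:optimality_upperbound2} holds. The key structural fact is that $(\Delta_k)_{k \in \N}$ is nonincreasing by \cref{eq:Delta_decreasing} and bounded below by $0$ (since $\Delta_k \geq 0$ by definition), hence convergent to some limit $\Delta_\infty \geq 0$. I would first record this and conclude that both $\Delta_k - \Delta_{k+1} \to 0$ and $\sqrt{\Delta_k} - \sqrt{\Delta_{k+1}} \to 0$ as $k \to \infty$.

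Next I would examine the two bounds separately. In \cref{eq:optimality_upperbound1}, the factor $\sqrt{\Delta_k}$ is bounded above (by $\sqrt{\Delta_0}$, using monotonicity), and the constant is fixed, so the right-hand side is controlled by $\sqrt{\Delta_0}\,(\Delta_k - \Delta_{k+1})$, which tends to $0$. In \cref{eq:optimality_upperbound2}, the right-hand side is a fixed constant times $\sqrt{\Delta_k} - \sqrt{\Delta_{k+1}}$, which also tends to $0$. Since for every $k$ at least one of the two alternatives holds, in either case $\omega(x_{k+1})$ is dominated by a quantity converging to $0$; more precisely, I would combine the two into a single bound
\begin{align}
  \omega(x_{k+1})
  \leq
  \max\set*{
    C_1 \prn[\big]{\Delta_0^{1/4} (\Delta_k - \Delta_{k+1})^{1/2}},\,
    C_2 \prn[\big]{\sqrt{\Delta_k} - \sqrt{\Delta_{k+1}}}
  }
\end{align}
for suitable constants $C_1, C_2 > 0$ read off from \cref{eq:optimality_upperbound1,eq:optimality_upperbound2}, and observe that the right-hand side vanishes in the limit.

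Finally, taking $k \to \infty$ yields $\lim_{k\to\infty}\omega(x_{k+1}) = 0$, which is equivalent to \cref{eq:global_convergence} after reindexing. I do not anticipate a serious obstacle here; the proposition is essentially an immediate corollary of \cref{lem:optimality_upperbound} together with the convergence of $(\Delta_k)_{k\in\N}$. The only point demanding mild care is the bookkeeping in the first alternative, where one must note that $\sqrt{\Delta_k}$ can be uniformly bounded by $\sqrt{\Delta_0}$ so that the telescoping-type differences $\Delta_k - \Delta_{k+1}$ (rather than the $\Delta_k$ themselves) drive the limit to zero. No summability or rate argument is needed for this qualitative statement — pointwise convergence of the differences suffices.
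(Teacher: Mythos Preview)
Your proposal is correct and matches the paper's approach essentially verbatim: the paper simply observes that $(\Delta_k)_{k\in\N}$ is nonincreasing and lower-bounded, hence convergent, so the right-hand sides of \cref{eq:optimality_upperbound1,eq:optimality_upperbound2} both tend to $0$, and the proposition follows immediately from \cref{lem:optimality_upperbound}. Your write-up is just a slightly more detailed version of the same one-line argument.
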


The following theorem shows the iteration and oracle complexity of our LM method.
\begin{theorem}[Iteration and oracle complexity]
  \label{thm:complexity_global}
  Suppose that \cref{asm:hc,asm:nablac_bound} hold.
  \begin{enuminthm}
    \item
    \label{thm:iteration_complexity_global}
    \cref{alg:proposed_backtracking} finds a point $x \in \dom g$ satisfying $\omega(x) \leq \epsilon$ within
    \revise{
    \begin{align}
      O \prn*{
        \frac{\sqrt{\Delta_0}}{\epsilon^2}
        (F(x_0) - F^*)
        \prn*{
          \rho_{\max}
          + \frac{L_c^2 L_h}{\rho_{\min}}
        }
      }
      \label{eq:iteration_complexity_general}
    \end{align}%
    }%
    iterations, \revise{where $\rho_{\max}$ is defined by \cref{eq:def_rhomax}.}
    Such a point is included in the sequence $(x_k)_{k \in \N}$ generated by \cref{alg:proposed_backtracking}.
    \revise{
    Furthermore, if we set the input parameter $\rho_{\min}$ of \cref{alg:proposed_backtracking} as $\rho_{\min} = \Theta(L_c \sqrt{L_h})$, the iteration complexity bound simplifies to 
    }
    \begin{align}
      O \prn*{
        \frac{L_c \sqrt{L_h \Delta_0} }{\epsilon^2}
        \prn*{ F(x_0) - F^* }
      }.
      \label{eq:iteration_complexity_simple}
    \end{align}
    \item
    \label{thm:oracle_complexity_global}
    \revise{For sufficiently small $\epsilon > 0$,} \cref{alg:proposed_backtracking} with \cref{alg:apg} finds a point $x \in \dom g$ satisfying $\omega(x) \leq \epsilon$ by calling the oracles \revise{
    \begin{align}
      O \Bigg(
        &
        \frac{\sqrt{\Delta_0}}{\epsilon^2}
        \prn*{ F(x_0) - F^* }
        \prn*{
          \rho_{\max}
          + \frac{L_c^2 L_h}{\rho_{\min}}
        }
        \prn*{
          1 + \log \prn*{ \frac{L_c \sqrt{L_h}}{\rho_{\min}} }
        }\\
        &\quad \times 
        \prn*{
          1 + \sqrt{1 + \frac{L_h \sigma^2}{\rho_{\min} \sqrt{\Delta_0}}}
          \log \prn*{
            1 + \frac{L_h \sigma^2}{\rho_{\min} \sqrt{\Delta_0}}
          }
        }
      \Bigg)
      \label{eq:oracle_complexity_general}
    \end{align}
    times.}
    Such a point is included in sequences $(\bar x_t)_{t \in \N}$ generated by \cref{alg:apg}.
    \revise{
    Furthermore, if we set $\rho_{\min}$ as $\rho_{\min} = \Theta(L_c \sqrt{L_h})$, the oracle complexity bound simplifies to
    }
    \begin{align}
      O \prn*{
        \frac{ L_c \sqrt{L_h \Delta_0}}{\epsilon^2}
        \prn*{ F(x_0) - F^* }
        \prn*{
          1 + 
          \sqrt{\kappa}\log \kappa
        }
      },
      \ \ \text{where}\ \ 
      \kappa
      \coloneqq
      1 + 
      \frac{\sqrt{L_h} \sigma^2}{L_c \sqrt{\Delta_0}}.
      \label{eq:oracle_complexity_simple}
    \end{align}
  \end{enuminthm}
\end{theorem}
\begin{proof}
  See \cref{sec:proof_iteration_oracle}.
\end{proof}


\revise{
  More precisely speaking, in order to guarantee $\omega(\bar x_1) \leq \epsilon$ in \cref{thm:oracle_complexity_global}, $\epsilon$ needs to be so small that $\epsilon \lesssim \frac{\rho_{\min}}{(L_h \sigma^2)^2}$,
  where the notation $\lesssim$ hides constant factors.
  We should note that the right-hand side $\frac{\rho_{\min}}{(L_h \sigma^2)^2}$ could be small depending on the values of $L_h$, $\sigma$, and $\rho_{\min}$.
  See the proof, \cref{eq:lesssim} in particular, given in \cref{sec:proof_iteration_oracle} for details.
}

\begin{remark}
  Setting $\mu$ to $\rho \sqrt{ F(x_k) - (g^* + h^*)} = \rho \sqrt{\Delta_k}$ in \cref{alg-line-bt:set_lambda} of \cref{alg:proposed_backtracking} is necessary to derive local convergence in the next section.
  If we give up the local convergence guarantee, the oracle complexity can be obtained more easily by setting $\mu$ to $\rho \sqrt{\Delta_0}$ instead of $\rho \sqrt{\Delta_k}$.
  Then, the oracle complexity per iteration is bounded as $O(1 + \sqrt{\kappa} \log \kappa)$ by \cref{prop:complexity_per_iteration}, and multiplying this bound by the iteration complexity in \cref{thm:iteration_complexity_global} immediately yields the same oracle complexity as \cref{thm:oracle_complexity_global}.
\end{remark}

\section{Local convergence under H\"olderian growth condition}
\label{sec:local_convergence}
We will see that \cref{alg:proposed_backtracking} achieves faster convergence under the H\"olderian growth condition.

\subsection{Notation and assumptions}
\label{sec:local_notation_assumption}
Let $(x_k)_{k \in \N}$ be generated by \cref{alg:proposed_backtracking}.
Since $(F(x_k))_{k \in \N}$ is nonincreasing and lower-bounded, the sequence converges to some value.
Let
\begin{align}
  F_\infty
  &\coloneqq
  \lim_{k \to \infty} F(x_k),\\
  \Delta_\infty
  &\coloneqq
  \lim_{k \to \infty} \Delta_k
  =
  F_\infty - (g^* + h^*)
  \geq 0,
  \label{eq:def_Deltainf}\\
  X^*
  &\coloneqq
  \Set*{x \in \dom g}{ \omega(x) = 0,\ F(x) \leq F_\infty },
  \label{eq:def_Xast}\\
  \dist(x, X^*)
  &\coloneqq
  \min_{y \in X^*} \norm*{y - x}.
  \label{eq:def_distxXast}
\end{align}
The set $X^*$ is a closed subset of the set of stationary points of problem \cref{eq:problem_main}.\footnote{
  The set $X^*$ is closed because it is the intersection of two closed sets.
  The set $\Set*{x \in \dom g}{F(x) \leq F_\infty}$ is closed since $F$ is continuous, and we can also check that the set $\Set*{x \in \dom g}{\omega(x) = 0}$ is closed since the subdifferential $\partial g(x)$ is closed and $\nabla H(x)$ is continuous.
}
Hence, \cref{eq:def_distxXast} is properly defined in that a minimizer exists.

To derive local convergence, we assume the following in addition to \cref{asm:hc,asm:nablac_bound}.
\begin{assumption}
  \label{asm:local}
  \leavevmode
  \begin{enuminasm}
    \item
    \label{asm:nonempty_Xast}
    $X^*$ is nonempty.
    \item
    \label{asm:holderian_growth}
    Let $(x_k)_{k \in \N}$ be generated by \cref{alg:proposed_backtracking}.
    For some $1 \leq r < 3$, $\gamma > 0$, and $K \in \N$,
    \begin{align}
      \frac{\gamma}{r} \dist(x_k, X^*)^r \leq F(x_k) - F_\infty,\quad
      \forall k \geq K.
    \end{align}
  \end{enuminasm}
\end{assumption}
\cref{asm:nonempty_Xast} holds when problem~\cref{eq:problem_main} has an optimal solution; the set $X^*$ contains the optimal solution.

\cref{asm:holderian_growth} is essential, and this type of condition is called a \emph{H\"olderian growth} condition or a \emph{H\"olderian error-bound} condition.
\revise{
The condition with $r = 2$ is called \emph{quadratic growth} and widely used in optimization, regardless of whether the problem is convex or nonconvex and least-squares or not (e.g., \citep{drusvyatskiy2018error,karimi2016linear,necoara2019linear}).
Furthermore, the condition is not restrictive; \citep[Theorem~2]{karimi2016linear} describes the relationship between various conditions, such as strong convexity and the Polyak--\L ojasiewicz condition, of which the quadratic growth is the weakest.
Similar results can be found in \citep[Theorem~4 and Fig.~1]{necoara2019linear}.
}

The H\"olderian growth with $r \geq 2$ has been used to analyze LM methods with $h(\cdot) = \frac{1}{2} \norm*{\cdot}^2$ \citep{yamashita2001rate,ahookhosh2019local,bao2019modified,wang2021convergence}, and the H\"olderian growth with $r = 1$ has been used for general Lipschitz $h$ \citep{burke1995gauss,li2002convergence,drusvyatskiy2018error}.
We analyze our LM method for general smooth~$h$ under the H\"olderian growth with $1 \leq r < 3$ in a unified manner.

\revise{
\begin{remark}
  \label{rem:compare_bergou2020}
  Let us focus on the case where $g(\cdot) = 0$ and $h(\cdot) = \frac{1}{2} \norm*{\cdot}^2$.
  \citet{bergou2020convergence} showed linear convergence of an LM method under a different condition than the quadratic growth when $\Delta_\infty$ is small.
  Their assumption is of the form
  \begin{align}
    \frac{\gamma}{2} \dist(x_k, X^*)^2
    \leq
    \frac{1}{2} \norm*{c(x_k) - c(x^*_k)}^2
  \end{align}
  for some $x^*_k \in \argmin_{x \in X^*} \norm*{x - x_k}$, while the quadratic growth is
  \begin{align}
    \frac{\gamma}{2} \dist(x_k, X^*)^2
    \leq
    \frac{1}{2} \prn*{ \norm*{c(x_k)}^2 - \norm{c(x^*_k)}^2 }.
  \end{align}
  Because $\norm*{c(x_k) - c(x^*_k)}^2 \leq \norm*{c(x_k)}^2 - \norm{c(x^*_k)}^2$ in some cases and not in others, we cannot conclude which condition is stronger.
  However, using \cref{asm:holderian_growth}, which extends the quadratic growth, helps allow comparison with problem settings other than least squares.
\end{remark}
}

\subsection{Convergence results}
As confirmed by \cref{eq:gradMF_norm_upperbound}, $x_{k+1}$ is an $\epsilon$-stationary point when $\norm*{x_{k+1} - x_k}$ is sufficiently small.
The following lemma shows that small $\dist(x_k, X^*)$ implies small $\norm*{x_{k+1} - x_k}$.
\begin{lemma}
  \label{lem:update_leq_dist}
  Suppose that \cref{asm:hc,asm:nablac_bound,asm:local} hold. 
  Let $(x_k)_{k \in \N}$ be generated by \cref{alg:proposed_backtracking}, and let 
  \begin{align}
    C_1
    &\coloneqq
    \frac{L_c \sqrt{2 L_h}}{(1 - \theta) \rho_{\min}}
    + \frac{1}{\sqrt{1 - \theta^2}}.
    \label{eq:def_C1}
  \end{align}
  If $k \geq K$ and
  \begin{align}
    \dist(x_k, X^*)
    &\leq
    \prn*{
      \frac{8 \sqrt{2 \gamma}}{L_c \sqrt{r L_h}}
    }^{2 / (4 - r)},
    \label{eq:asm_vk_norm_upperbound}
  \end{align}
  then the following holds:
  \begin{align}
    \norm*{x_{k+1} - x_k}
    \leq
    C_1 \dist(x_k, X^*).
    \label{eq:uk_norm_bound_simple}
  \end{align}
\end{lemma}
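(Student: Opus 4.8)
The plan is to fix an index $k \geq K$ satisfying \cref{eq:asm_vk_norm_upperbound}, let $x^* \in X^*$ be a nearest stationary point so that $\norm*{x^* - x_k} = \dist(x_k, X^*)$, and introduce the exact minimizer $\hat x_k \coloneqq \argmin_x \bar F_{k, \mu_k}(x)$ of the subproblem. Since $\bar F_{k, \mu_k}$ is $\mu_k$-strongly convex, I can control distances by optimality measures and pass between $x_{k+1}$, $\hat x_k$, and $x^*$. The whole bound will be assembled as $\norm*{x_{k+1} - x_k} \le (\text{approximation error relating } x_{k+1} \text{ to } \hat x_k) + (\text{model error relating } \hat x_k \text{ to } x^*)$, with the two summands producing the two terms of $C_1$.

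First I would relate the computed point to the exact one. Because $x_{k+1} \in S_{k, \mu_k}$, \cref{eq:def_Sk} gives a subgradient of $\bar F_{k, \mu_k}$ at $x_{k+1}$ of norm at most $\theta \mu_k \norm*{x_{k+1} - x_k}$, and $\mu_k$-strong convexity then yields $\norm*{x_{k+1} - \hat x_k} \le \theta \norm*{x_{k+1} - x_k}$. A crude triangle inequality gives the factor $\tfrac{1}{1-\theta}$, but the constant $\tfrac{1}{\sqrt{1-\theta^2}}$ in \cref{eq:def_C1} indicates a Pythagorean-type refinement that uses the minimality of $\hat x_k$ (rather than only $\norm*{x_{k+1}-\hat x_k}\le\theta\norm*{x_{k+1}-x_k}$) to obtain $\norm*{x_{k+1} - x_k} \le \tfrac{1}{\sqrt{1-\theta^2}} \norm*{\hat x_k - x_k}$. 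This reduces the task to bounding $\norm*{\hat x_k - x_k}$, and in turn $\norm*{\hat x_k - x^*}$, in terms of $\dist(x_k, X^*)$.

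Next I would bound $\norm*{\hat x_k - x^*}$ by estimating the subproblem optimality measure at the stationary point. Strong convexity gives $\norm*{\hat x_k - x^*} \le \bar\omega_{k, \mu_k}(x^*)/\mu_k$, so I need $\bar\omega_{k, \mu_k}(x^*)$. Using stationarity of $x^*$ (there is $p^* \in \partial g(x^*)$ with $p^* + \nabla H(x^*) = 0$) as the candidate subgradient, $\bar\omega_{k, \mu_k}(x^*)$ is controlled by $\norm*{\nabla c(x_k)^\top \nabla h(w^*) - \nabla c(x^*)^\top \nabla h(c(x^*))} + \mu_k \norm*{x^* - x_k}$, where $w^* \coloneqq c(x_k) + \nabla c(x_k)(x^* - x_k)$. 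The Lipschitz estimates from \cref{asm:h_smooth,asm:nabla-c_lip} (together with the smoothness bound $\norm*{\nabla h(y)}^2 \le 2 L_h(h(y) - h^*)$) split this into a benign $\mu_k \dist(x_k, X^*)$ piece and a model-error piece of the order $L_c \sqrt{2 L_h \Delta_k}\,\dist(x_k, X^*)$ plus a higher-order $\dist(x_k, X^*)^2$ piece.

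Finally I would convert these to the claimed form. Dividing by $\mu_k$ and invoking $\mu_k \ge \rho_{\min} \sqrt{\Delta_k}$ from \cref{eq:muk_lowerupperbounds} turns the $L_c \sqrt{2 L_h \Delta_k}/\mu_k$ factor into $\tfrac{L_c \sqrt{2 L_h}}{(1-\theta)\rho_{\min}}$, the second term of $C_1$, while the $\mu_k \dist(x_k, X^*)/\mu_k$ piece contributes the leading $\dist(x_k, X^*)$ that combines with the $\tfrac{1}{\sqrt{1-\theta^2}}$ factor. The remaining superlinear piece, after using the H\"olderian lower bound $\tfrac{\gamma}{r}\dist(x_k, X^*)^r \le F(x_k) - F_\infty = \Delta_k - \Delta_\infty$ from \cref{asm:holderian_growth} to replace $\sqrt{\Delta_k}$ in the denominator by a multiple of $\dist(x_k, X^*)^{r/2}$, becomes a term of order $\dist(x_k, X^*)^{(4-r)/2}$; the threshold \cref{eq:asm_vk_norm_upperbound} is precisely what makes this quantity bounded by a constant, so it can be absorbed. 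I expect this last step, reconciling the shrinking damping $\mu_k \sim \sqrt{\Delta_k} \gtrsim \dist(x_k, X^*)^{r/2}$ against the quadratic model error so that everything collapses to a clean multiple of $\dist(x_k, X^*)$ with the exact exponent $2/(4-r)$, to be the main obstacle and the place where the constant bookkeeping must be done carefully.
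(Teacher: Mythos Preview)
Your outline is a plausible route to \emph{a} bound of the form $\norm{x_{k+1}-x_k}\le C\,\dist(x_k,X^*)$, but two steps do not go through as you describe, and in particular you will not recover the stated constant $C_1$.

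First, the ``Pythagorean-type refinement'' $\norm{x_{k+1}-x_k}\le \tfrac{1}{\sqrt{1-\theta^2}}\norm{\hat x_k-x_k}$ is false in general. From $\norm{x_{k+1}-\hat x_k}\le\theta\norm{x_{k+1}-x_k}$ you would need $\inner{x_{k+1}-\hat x_k}{\hat x_k-x_k}\le 0$, and minimality of $\hat x_k$ gives no such obtuse-angle relation. A one-dimensional counterexample: take any $\mu_k$-strongly convex $\bar F_{k,\mu_k}$ with minimizer $\hat x_k=2$, $x_k=0$, $x_{k+1}=3$, and $\theta\ge 1/3$; then $\norm{x_{k+1}-\hat x_k}=1\le\theta\cdot 3=\theta\norm{x_{k+1}-x_k}$, yet $\norm{x_{k+1}-x_k}^2=9>5=\norm{x_{k+1}-\hat x_k}^2+\norm{\hat x_k-x_k}^2$. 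Only the cruder factor $1/(1-\theta)$ survives your decomposition.

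Second, your gradient-level bound on $\bar\omega_{k,\mu_k}(x^*)$ necessarily picks up $\normop{\nabla c(x_k)}\le\sigma$ when you control $\norm{\nabla c(x_k)^\top(\nabla h(w^*)-\nabla h(c(x^*)))}$, so your higher-order term is of order $\tfrac{\sigma L_c L_h}{\mu_k}\dist(x_k,X^*)^2$. After using the growth condition and the threshold \cref{eq:asm_vk_norm_upperbound} this is bounded but contributes a $\sigma$-dependent piece; the constant $C_1$ in \cref{eq:def_C1} is $\sigma$-free.

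The paper's argument avoids both problems by dispensing with the exact minimizer and instead sandwiching $\bar F_{k,\mu_k}(x_k^*)-\bar F_{k,\mu_k}(x_{k+1})$ directly: the lower bound comes from \cref{eq:proj-grad_norm_lowerbound_sub} and $x_{k+1}\in S_{k,\mu_k}$, and the upper bound is obtained at the \emph{function-value} level via the three-point inequality \cref{eq:lem_three-points} centered at $c(x_k^*)$, which yields the remainder $\tfrac{L_h}{2}\norm{z-c(x_k^*)}^2\le\tfrac{L_c^2 L_h}{8}\norm{v}^4$ with $v\coloneqq x_k^*-x_k$ and no Jacobian norm. Combining the two sides gives a quadratic inequality in $\norm{u}\coloneqq\norm{x_{k+1}-x_k}$ of the form $(1-\theta^2)\norm{u}^2-2\theta\phi\norm{v}\norm{u}-(1+\phi)^2\norm{v}^2\le 0$ (after the threshold \cref{eq:asm_vk_norm_upperbound} absorbs the quartic term), and solving it produces the factor $1/\sqrt{1-\theta^2}$. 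So the $\sqrt{1-\theta^2}$ is an artifact of the quadratic formula, not of a Pythagorean identity, and the absence of $\sigma$ is a consequence of working with function values rather than subproblem gradients.
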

\begin{proof}
  In \cref{sec:proof_update_leq_dist}.
\end{proof}

Using \cref{eq:gradMF_norm_upperbound,lem:update_leq_dist}, we obtain the following proposition.
\begin{proposition}
  \label{prop:local_convergence}
  Suppose that \cref{asm:hc,asm:nablac_bound,asm:local} hold.
  Let $(x_k)_{k \in \N}$ be generated by \cref{alg:proposed_backtracking}, and let 
  \begin{align}
    \delta_k \coloneqq F(x_k) - F_\infty
    \label{eq:def_deltak}
  \end{align}
  and 
  \begin{align}
    C_2
    &\coloneqq
    (1 + \theta) \rho_{\max} + L_c \sqrt{2 L_h},\quad
    C_3
    \coloneqq
    2 C_1 C_2 
    \prn*{
      \frac{r}{\gamma}
    }^{2/r},\quad
    C_4
    \coloneqq
    2 C_1^2
    L_c L_h \sigma
    \prn*{
      \frac{r}{\gamma}
    }^{3/r},
    \label{eq:def_C23}
  \end{align}
  where $\rho_{\max}$ and $C_1$ are defined in \cref{eq:def_rhomax,eq:def_C1}.
  If $k \geq K$, \cref{eq:asm_vk_norm_upperbound}, and
  \begin{align}
    L_c \sqrt{2 L_h (\delta_{k+1} + \Delta_\infty)}
    \prn*{\frac{r \delta_{k+1}}{\gamma}}^{2/r}
    \leq
    \delta_{k+1}
    \label{eq:asm_vk_norm_upperbound2}
  \end{align}
  hold, then $\delta_{k+1} = 0$ or
  \begin{align}
    \delta_{k+1}^{1-1/r}
    \leq
    \prn*{
      C_3
      \sqrt{\delta_k + \Delta_\infty}
      +
      C_4
      \delta_k^{1/r}
    }
    \delta_k^{1/r}.
    \label{eq:local_convergence_rate}
  \end{align}
\end{proposition}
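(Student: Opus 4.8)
The plan is to convert the stationarity bound \cref{eq:gradMF_norm_upperbound} into a bound on the function-value gap $\delta_{k+1}$ by comparing $x_{k+1}$ with its projection onto $X^*$. I would first assume $\delta_{k+1} > 0$ (otherwise the claim is trivial) and let $x^* \in X^*$ attain $\dist(x_{k+1}, X^*) = \norm{x_{k+1} - x^*}$; by definition \cref{eq:def_Xast} of $X^*$ we have $F(x^*) \le F_\infty$, so $\delta_{k+1} \le F(x_{k+1}) - F(x^*)$. Next I would upper-bound $F(x_{k+1}) - F(x^*)$ using the composite structure rather than a global Lipschitz constant of $\nabla H$: picking $p \in \partial g(x_{k+1})$ that attains $\omega(x_{k+1})$, convexity of $g$ gives $g(x_{k+1}) - g(x^*) \le \inner{p}{x_{k+1} - x^*}$, while convexity of $h$ followed by the linearization estimate \cref{eq:property_nabla-c_lip} for $c$ and the chain rule $\nabla H(x_{k+1}) = \nabla c(x_{k+1})^\top \nabla h(c(x_{k+1}))$ gives $H(x_{k+1}) - H(x^*) \le \inner{\nabla H(x_{k+1})}{x_{k+1} - x^*} + \frac{L_c}{2} \norm{\nabla h(c(x_{k+1}))} \dist(x_{k+1}, X^*)^2$. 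Adding these and applying Cauchy--Schwarz yields
\begin{align}
  \delta_{k+1}
  \le
  \omega(x_{k+1}) \dist(x_{k+1}, X^*)
  + \frac{L_c}{2} \norm{\nabla h(c(x_{k+1}))} \dist(x_{k+1}, X^*)^2 .
\end{align}

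The second step is to absorb the quadratic remainder into the left-hand side. The gradient-norm bound \cref{eq:lem_smoothness_obj_grad-norm_bound} gives $\norm{\nabla h(c(x_{k+1}))} \le \sqrt{2 L_h (h(c(x_{k+1})) - h^*)} \le \sqrt{2 L_h (\delta_{k+1} + \Delta_\infty)}$, where the last step uses $h(c(x_{k+1})) - h^* \le \Delta_{k+1} = \delta_{k+1} + \Delta_\infty$. Combining this with the H\"olderian growth bound $\dist(x_{k+1}, X^*) \le (r \delta_{k+1} / \gamma)^{1/r}$ (valid since $k + 1 \ge K$) and the technical hypothesis \cref{eq:asm_vk_norm_upperbound2}, the quadratic term is at most $\frac{1}{2}\delta_{k+1}$, so that $\delta_{k+1} \le 2 \omega(x_{k+1}) \dist(x_{k+1}, X^*)$. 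Applying the H\"olderian bound once more to the remaining factor of $\dist(x_{k+1}, X^*)$ and dividing through by $\delta_{k+1}^{1/r}$ then produces the target exponent,
\begin{align}
  \delta_{k+1}^{1 - 1/r}
  \le
  2 \prn*{\frac{r}{\gamma}}^{1/r} \omega(x_{k+1}) .
\end{align}

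It then remains to bound $\omega(x_{k+1})$ purely in terms of $\delta_k$. I would feed the estimate $\norm{x_{k+1} - x_k} \le C_1 \dist(x_k, X^*) \le C_1 (r \delta_k / \gamma)^{1/r}$ from \cref{lem:update_leq_dist} and \cref{asm:holderian_growth} into \cref{eq:gradMF_norm_upperbound}, together with $\mu_k \le \rho_{\max}\sqrt{\Delta_k}$ from \cref{eq:muk_lowerupperbounds} and $\Delta_k = \delta_k + \Delta_\infty$, so that the coefficient $(1+\theta)\mu_k + L_c\sqrt{2 L_h \Delta_k}$ collapses to $C_2 \sqrt{\delta_k + \Delta_\infty}$. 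Substituting the resulting two-term bound for $\omega(x_{k+1})$ and collecting the powers of $r/\gamma$ and $\delta_k$ reproduces \cref{eq:local_convergence_rate} with the constants $C_3, C_4$ of \cref{eq:def_C23}. I expect the main obstacle to be the second step: the remainder from linearizing $c$ inside $h$ carries the factor $\norm{\nabla h(c(x_{k+1}))}$, which is not controlled by the ambient constants, and the whole argument rests on taming it through \cref{eq:lem_smoothness_obj_grad-norm_bound} and \cref{eq:asm_vk_norm_upperbound2} so that it can be moved to the left-hand side; the accompanying circular appearance of $\delta_{k+1}$ on both sides, resolved by this absorption and by the final division by $\delta_{k+1}^{1/r}$, is the delicate part of the bookkeeping.
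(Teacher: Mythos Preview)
Your proposal is correct and follows essentially the same route as the paper's proof: both compare $x_{k+1}$ to its projection onto $X^*$, use convexity of $g$ and $h$ together with the linearization error \cref{eq:property_nabla-c_lip} to obtain $\delta_{k+1}\le \omega(x_{k+1})\dist(x_{k+1},X^*)+\tfrac{L_c}{2}\norm{\nabla h(c(x_{k+1}))}\dist(x_{k+1},X^*)^2$, absorb the quadratic remainder via \cref{eq:asm_vk_norm_upperbound2}, and then feed in \cref{eq:gradMF_norm_upperbound} and \cref{lem:update_leq_dist}. The only difference is organizational---the paper substitutes the bound on $\omega(x_{k+1})$ before absorbing the remainder, whereas you first isolate $\delta_{k+1}^{1-1/r}\le 2(r/\gamma)^{1/r}\omega(x_{k+1})$; your route in fact yields the slightly sharper constant $C_4/2$ in place of $C_4$.
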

\begin{proof}
  In \cref{sec:proof_prop_localconv}.
\end{proof}

The above proposition yields the convergence order of \cref{alg:proposed_backtracking}.
\begin{theorem}[Order of local convergence]
  \label{thm:local_obj}
  Suppose that \cref{asm:hc,asm:nablac_bound,asm:local} hold.
  Let $(x_k)_{k \in \N}$ be generated by \cref{alg:proposed_backtracking}.
  \begin{enumerate}
    \item
    \label{thm:local_r1}
    If $r = 1$, then $F(x_k) = F_\infty$ for some $k \in \N$.
    \item
    \label{thm:local_r12_delta0}
    If $1 < r < 2$ and $\Delta_\infty = 0$, then $(F(x_k))_{k \in \N}$ converges with order $\frac{r+2}{2(r-1)}$.
    \item
    \label{thm:local_r12}
    If $1 < r < 2$ and $\Delta_\infty > 0$, then $(F(x_k))_{k \in \N}$ converges with order $\frac{1}{r-1}$.
    \item
    \label{thm:local_r23_delta0}
    If $2 \leq r < 3$ and $\Delta_\infty = 0$, then $(F(x_k))_{k \in \N}$ converges with order $\frac{2}{r-1}$.
    \item
    \label{thm:local_r2_deltasmall}
    If $r = 2$ and $0 < \Delta_\infty < 1 / C_3^2$, then $(F(x_k))_{k \in \N}$ converges with order $1$ (i.e., linearly), where $C_3$ is defined in \cref{eq:def_C23}.
  \end{enumerate}
\end{theorem}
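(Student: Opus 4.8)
The plan is to derive every claim from the scalar recursion \cref{eq:local_convergence_rate} of \cref{prop:local_convergence}. First I would record two elementary consequences of convergence: since $F(x_k)\to F_\infty$ we have $\delta_k\to 0$, and \cref{asm:holderian_growth} then forces $\dist(x_k,X^*)\to 0$ via $\tfrac{\gamma}{r}\dist(x_k,X^*)^r\le\delta_k$. These two facts guarantee that the hypotheses \cref{eq:asm_vk_norm_upperbound,eq:asm_vk_norm_upperbound2} of \cref{prop:local_convergence} hold for all sufficiently large $k$: the right-hand side of \cref{eq:asm_vk_norm_upperbound} is a fixed positive constant, while \cref{eq:asm_vk_norm_upperbound2} is verified by comparing powers of $\delta_{k+1}$ on its two sides (its left side carries exponent $\tfrac12+\tfrac2r$ when $\Delta_\infty=0$ and exponent $\tfrac2r$ when $\Delta_\infty>0$, and both exceed $1$ over the relevant ranges of $r$, with the borderline $r=2$ absorbed by the smallness of $\Delta_\infty$). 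Thus \cref{eq:local_convergence_rate} is available for all large $k$, and I may assume $\delta_k>0$ for every $k$, since otherwise $F(x_k)=F_\infty$ for some $k$ and all assertions are immediate.

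The heart of the argument is to read off the convergence order from \cref{eq:local_convergence_rate} by identifying the dominant term of the bracket $C_3\sqrt{\delta_k+\Delta_\infty}+C_4\delta_k^{1/r}$. For $r=1$ the left exponent $1-\tfrac1r$ vanishes, so \cref{eq:local_convergence_rate} reads $1\le(\text{a quantity}\to 0)$, impossible for large $k$ unless $\delta_{k+1}=0$; this yields finite termination. When $r>1$ and $\Delta_\infty=0$ both bracket terms vanish, and I compare the exponents $\tfrac12$ (from $C_3\sqrt{\delta_k}$) and $\tfrac1r$ (from $C_4\delta_k^{1/r}$): for $1<r<2$ one has $\tfrac12<\tfrac1r$, the $C_3$-term dominates, and \cref{eq:local_convergence_rate} collapses to $\delta_{k+1}^{1-1/r}\le C\delta_k^{1/2+1/r}$, giving order $\tfrac{1/2+1/r}{1-1/r}=\tfrac{r+2}{2(r-1)}$; for $2\le r<3$ the $C_4$-term dominates and the recursion collapses to $\delta_{k+1}^{1-1/r}\le C\delta_k^{2/r}$, giving order $\tfrac{2/r}{1-1/r}=\tfrac{2}{r-1}$.

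When $\Delta_\infty>0$ the term $C_3\sqrt{\delta_k+\Delta_\infty}\to C_3\sqrt{\Delta_\infty}$ stays bounded away from zero and dominates, so the bracket is eventually bounded by a constant and \cref{eq:local_convergence_rate} reduces to $\delta_{k+1}^{1-1/r}\le C\delta_k^{1/r}$, i.e.\ order $\tfrac{1/r}{1-1/r}=\tfrac1{r-1}$; this settles $1<r<2$. The borderline $r=2$ is the delicate case: here $1-\tfrac1r=\tfrac1r=\tfrac12$, and squaring \cref{eq:local_convergence_rate} gives $\delta_{k+1}\le\bigl(C_3\sqrt{\delta_k+\Delta_\infty}+C_4\sqrt{\delta_k}\bigr)^2\delta_k$, whose prefactor tends to $C_3^2\Delta_\infty$; the hypothesis $\Delta_\infty<1/C_3^2$ makes this limit strictly below $1$, whence $\limsup_k\delta_{k+1}/\delta_k\le C_3^2\Delta_\infty<1$ and the convergence is linear. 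In each case, because $\delta_k\to 0$ is already established, the reduced estimate $\delta_{k+1}\le C\delta_k^{p}$ is precisely the assertion that $(F(x_k))_{k\in\N}$ converges with $Q$-order $p$.

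I expect the main obstacle to be the bookkeeping required to (i) confirm that both hypotheses \cref{eq:asm_vk_norm_upperbound,eq:asm_vk_norm_upperbound2} genuinely hold for large $k$ in every regime and (ii) justify that the nominated dominant term controls the bracket uniformly for large $k$, not merely in the limit. The sharpest point is the case $r=2$ with $\Delta_\infty>0$, where the prefactor $C_3^2\Delta_\infty$ must be certified to lie below $1$; this is exactly where the quantitative hypothesis $\Delta_\infty<1/C_3^2$ is consumed, and one must also check that it is strong enough to secure \cref{eq:asm_vk_norm_upperbound2} at the borderline exponent.
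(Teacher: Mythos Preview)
Your proposal is correct and follows essentially the same route as the paper: invoke \cref{prop:local_convergence} to obtain the scalar recursion \cref{eq:local_convergence_rate}, verify its hypotheses \cref{eq:asm_vk_norm_upperbound,eq:asm_vk_norm_upperbound2} hold eventually, and then read off the order case by case from the dominant bracket term. You have also correctly flagged the one genuinely delicate point---that at $r=2$ with $\Delta_\infty>0$ the hypothesis $\Delta_\infty<1/C_3^2$ must do double duty, both to make the limiting ratio $<1$ and to secure \cref{eq:asm_vk_norm_upperbound2}; the paper handles this by unwinding $C_3$ via $C_1\ge 1$ and $C_2\ge L_c\sqrt{2L_h}$ to get $\Delta_\infty<\gamma^2/(32L_c^2L_h)$, which suffices.
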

\begin{proof}
  The sequence $(\delta_k)_{k \in \N}$ converges to $0$ by definition \cref{eq:def_deltak}.
  In addition, the sequences $(\dist(x_k, X^*))_{k \in \N}$ also converges to $0$ from \cref{asm:holderian_growth}.
  Therefore, condition~\cref{eq:asm_vk_norm_upperbound} holds for all sufficiently large $k \in \N$.
  \paragraph{Case~\ref{thm:local_r1}: $r = 1$.}
  Condition~\cref{eq:asm_vk_norm_upperbound2} holds for sufficiently small $\delta_{k+1}$ and hence for sufficiently large $k$.
  Thus, \cref{prop:local_convergence} implies that $\delta_{k+1} = 0$ or \cref{eq:local_convergence_rate} holds for all large $k$.
  However, \cref{eq:local_convergence_rate} does not hold if $\delta_{k+1} > 0$, and hence we have $\delta_{k+1} = 0$.
  This completes the proof of Case~\ref{thm:local_r1}.

  Below, we assume that $\delta_k > 0$ for all $k \in \N$.
  If $\delta_k = 0$ for some $k$, Cases~\ref{thm:local_r12_delta0}--\ref{thm:local_r2_deltasmall} are obvious.

  \paragraph{Case~\ref{thm:local_r12_delta0}: $1 < r < 2$ and $\Delta_\infty = 0$.}
  Condition~\cref{eq:asm_vk_norm_upperbound2} holds for all sufficiently large $k$.
  From \cref{prop:local_convergence}, we have \cref{eq:local_convergence_rate} for all large $k$ and hence
  \begin{align}
    \frac{\delta_{k+1}^{1 - 1/r}}{\delta_k^{1/r + 1/2}}
    \leq
    C_3
    +
    C_4
    \delta_k^{1/r - 1/2}
    \to
    C_3
  \end{align}
  as $k \to \infty$.
  Hence, the order of convergence is $\frac{1/r + 1/2}{1 - 1/r} = \frac{r+2}{2(r-1)}$.

  \paragraph{Case~\ref{thm:local_r12}: $1 < r < 2$ and $\Delta_\infty > 0$.}
  As with Case~\ref{thm:local_r12_delta0}, we have \cref{eq:local_convergence_rate} for all large $k$ and hence
  \begin{align}
    \frac{\delta_{k+1}^{1 - 1/r}}{\delta_k^{1/r}}
    \leq
    C_3
    \sqrt{\delta_k + \Delta_\infty}
    +
    C_4
    \delta_k^{1/r}
    \to
    C_3
    \sqrt{\Delta_\infty}
  \end{align}
  as $k \to \infty$.
  Hence, the order of convergence is $\frac{1/r}{1 - 1/r} = \frac{1}{r-1}$.

  \paragraph{Case~\ref{thm:local_r23_delta0}: $2 \leq r < 3$ and $\Delta_\infty = 0$.}
  As with Case~\ref{thm:local_r12_delta0}, we have \cref{eq:local_convergence_rate} for all large $k$ and hence
  \begin{align}
    \frac{\delta_{k+1}^{1 - 1/r}}{\delta_k^{2/r}}
    \leq
    C_3
    \delta_k^{1/2 - 1/r}
    +
    C_4
    \to
    \begin{dcases*}
      C_3 + C_4 & if $r = 2$,\\
      C_4 & if $2 < r < 3$
    \end{dcases*}
  \end{align}
  as $k \to \infty$.
  Hence, the order of convergence is $\frac{2/r}{1 - 1/r} = \frac{2}{r-1}$.

  \paragraph{Case~\ref{thm:local_r2_deltasmall}: $r = 2$ and $0 < \Delta_\infty < 1 / C_3^2$.}
  Since
  \begin{alignat}{2}
    C_1 
    &=
    \frac{L_c \sqrt{2 L_h}}{(1 - \theta) \rho_{\min}}
    + \frac{1}{\sqrt{1 - \theta^2}}
    \geq
    \frac{1}{\sqrt{1 - \theta^2}}
    \geq
    1,\\
    C_2
    &=
    (1 + \theta) \rho_{\max} + L_c \sqrt{2 L_h}
    \geq
    L_c \sqrt{2 L_h},
  \end{alignat}
  we have
  \begin{align}
    \Delta_\infty
    <
    \frac{1}{C_3^2}
    =
    \prn*{
      \frac{\gamma}{4 C_1 C_2}
    }^2
    \leq
    \frac{\gamma^2}{32 L_c^2 L_h}.
  \end{align}
  Thus, condition~\cref{eq:asm_vk_norm_upperbound2} holds if
  \begin{align}
    \delta_{k+1}
    \leq
    \prn*{
      \frac{\gamma}{2 L_c \sqrt{2 L_h}}
    }^2
    - 
    \frac{\gamma^2}{32 L_c^2 L_h}
    =
    \frac{3 \gamma^2}{32 L_c^2 L_h},
  \end{align}
  or if $k$ is sufficiently large.
  From \cref{prop:local_convergence}, we have \cref{eq:local_convergence_rate} for all large $k$ and hence
  \begin{align}
    \frac{\delta_{k+1}^{1/2}}{\delta_k^{1/2}}
    \leq
    C_3 \sqrt{\delta_{k+1} + \Delta_\infty}
    +
    C_4
    \delta_k^{1/2}
    \to
    C_3 \sqrt{\Delta_\infty}
    <
    1
  \end{align}
  as $k \to \infty$.
  Hence, the order of convergence is $\frac{1/2}{1/2} = 1$.
\end{proof}

The assumption of Case~\ref{thm:local_r1} of \cref{thm:local_obj} holds when, for example, $d = n = 1$ and
\begin{align}
  g(x)
  =
  \begin{dcases*}
    0 & if $|x| \leq 1$,\\
    +\infty & otherwise,
  \end{dcases*}
  \quad
  h(y) = y^2,\quad
  c(x) = x^2 - 2.
\end{align}
Existing analyses \citep{burke1995gauss,li2002convergence,drusvyatskiy2018error} of LM methods for a general function $h$ provide local quadratic convergence under the Lipschitz continuity of $h$, while we obtain faster convergence, convergence in finite iterations, under the Lipschitz continuity of $\nabla h$.
We do not have a clear answer to the question of whether there are instances that satisfy the assumptions in Cases~\ref{thm:local_r12_delta0} or \ref{thm:local_r12} but not Case~\ref{thm:local_r1}.

Case~\ref{thm:local_r23_delta0} with $r = 2$ yields quadratic convergence.
This result extends local quadratic convergence of LM methods for $h(\cdot) = \frac{1}{2} \norm{\cdot}^2$, which has been extensively studied \citep{yamashita2001rate,fan2005quadratic,fan2006convergence,dan2002convergence,fischer2010inexactness,bellavia2014strong,bellavia2010convergence,kanzow2004levenberg,behling2012unified,facchinei2013family}, to general $h$.
Similarly, our result in Case~\ref{thm:local_r2_deltasmall} extends the existing linear convergence result for $h(\cdot) = \frac{1}{2} \norm{\cdot}^2$ \citep{bergou2020convergence}.

We have shown the convergence of the sequence $(F(x_k))_{k \in \N}$, and  we see from \cref{asm:holderian_growth} that $(\dist(x_k, X^*))_{k \in \N}$ converges equally or faster.
Furthermore, we show that the solution sequence $(x_k)_{k \in \N}$ also converges to a stationary point in $X^*$.

\begin{theorem}
  Suppose that \cref{asm:hc,asm:nablac_bound,asm:local} hold.
  Let $(x_k)_{k \in \N}$ be generated by \cref{alg:proposed_backtracking}.
  If one of the following holds:
  \begin{itemize}
    \item 
    $1 \leq r < 2$,
    \item 
    $r = 2$ and $\Delta_\infty < 1 / C_3^2$, or
    \item 
    $2 < r < 3$ and $\Delta_\infty = 0$,
  \end{itemize}
  then the sequence $(x_k)_{k \in \N}$ converges to a stationary point $x^* \in X^*$.
\end{theorem}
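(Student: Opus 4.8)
The plan is to prove convergence of the whole iterate sequence by showing it is Cauchy, i.e.\ that $\sum_{k} \norm*{x_{k+1} - x_k} < \infty$, and then to identify the limit as a point of $X^*$. The natural starting point is \cref{lem:update_leq_dist}, which bounds each step by the distance to $X^*$: once $\dist(x_k, X^*)$ is small enough for \cref{eq:asm_vk_norm_upperbound} to hold, we have $\norm*{x_{k+1}-x_k} \le C_1 \dist(x_k, X^*)$ with $C_1$ from \cref{eq:def_C1}. Combining this with the H\"olderian growth \cref{asm:holderian_growth}, which gives $\dist(x_k, X^*) \le (r \delta_k / \gamma)^{1/r}$ for $\delta_k \coloneqq F(x_k) - F_\infty$, reduces the whole problem to establishing summability of $(\delta_k^{1/r})_k$.

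First I would observe that in each listed case \cref{thm:local_obj} forces $(\delta_k)$ to decay at least geometrically. For $r=1$ the objective terminates finitely, so $\delta_k = 0$ for all large $k$. For the cases whose order is strictly larger than $1$—namely $1 < r < 2$, and $2 \le r < 3$ with $\Delta_\infty = 0$—we have $\delta_{k+1}/\delta_k \to 0$. For the single linear case, $r = 2$ with $0 < \Delta_\infty < 1/C_3^2$, the proof of \cref{thm:local_obj} gives asymptotic rate $C_3\sqrt{\Delta_\infty} < 1$, where $C_3$ is from \cref{eq:def_C23}. In all cases there thus exist $c \in (0,1)$ and $k_1 \ge K$ with $\delta_{k+1} \le c\,\delta_k$ for $k \ge k_1$, whence $\delta_k \le c^{k-k_1}\delta_{k_1}$. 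Since $\dist(x_k, X^*) \to 0$ (again by \cref{asm:holderian_growth} and $\delta_k \to 0$), condition \cref{eq:asm_vk_norm_upperbound} holds for all large $k$, so \cref{lem:update_leq_dist} applies and $\norm*{x_{k+1}-x_k} \le C_1 (r/\gamma)^{1/r} \delta_k^{1/r}$ decays geometrically with ratio $c^{1/r}$; summing the geometric tail yields $\sum_k \norm*{x_{k+1}-x_k} < \infty$. Therefore $(x_k)$ is Cauchy in $\R^d$ and converges to some $x^*$.

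It then remains to show $x^* \in X^*$ as defined in \cref{eq:def_Xast}, i.e.\ $x^* \in \dom g$, $\omega(x^*) = 0$, and $F(x^*) \le F_\infty$. For the optimality measure, let $p_k \in \partial g(x_k)$ attain $\omega(x_k) = \norm*{p_k + \nabla H(x_k)}$; by \cref{prop:global_convergence}, $\omega(x_k) \to 0$, and since $\nabla H$ is continuous we get $p_k = -\nabla H(x_k) + o(1) \to -\nabla H(x^*)$. Because $g$ is proper closed convex, $\gra \partial g$ is closed, so $-\nabla H(x^*) \in \partial g(x^*)$; this gives at once $x^* \in \dom g$ and $\omega(x^*) \le \norm*{-\nabla H(x^*) + \nabla H(x^*)} = 0$, hence $\omega(x^*) = 0$. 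Finally, $F = g + H$ is lower semicontinuous (sum of the closed convex $g$ and the continuous $H$), so $F(x^*) \le \liminf_k F(x_k) = F_\infty$. Together these give $x^* \in X^*$.

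The geometric summation and the Cauchy argument are routine. I expect the main obstacle to be the limit-identification step—extracting $\omega(x^*) = 0$ from $\omega(x_k) \to 0$—since this is where one must pass to the limit inside the nonsmooth, set-valued optimality measure by combining continuity of $\nabla H$ with closedness of $\gra \partial g$, simultaneously securing $x^* \in \dom g$. The conceptual point underlying the whole proof is that the cases excluded from the statement (e.g.\ $2 < r < 3$ with $\Delta_\infty > 0$, or $r = 2$ with $\Delta_\infty \ge 1/C_3^2$) are precisely those for which \cref{thm:local_obj} does not guarantee at-least-geometric decay of $(\delta_k)$, so the summability engine would break down there.
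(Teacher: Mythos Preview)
Your proof is correct and matches the paper's argument for the main step: both invoke \cref{thm:local_obj} to get at-least-geometric decay of $\delta_k$ in each listed case, then combine \cref{lem:update_leq_dist} with \cref{asm:holderian_growth} to bound $\norm*{x_{k+1}-x_k}$ by a summable geometric tail, yielding that $(x_k)$ is Cauchy.

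The only divergence is in the limit-identification step. The paper does not go through the graph of $\partial g$ or lower semicontinuity of $F$; it simply uses that $X^*$ is closed (recorded in the footnote after \cref{eq:def_distxXast}) together with $\dist(x_k,X^*)\to 0$: picking $y_k\in X^*$ with $\norm*{y_k-x_k}=\dist(x_k,X^*)$, one has $y_k\to x^*$ and hence $x^*\in X^*$ immediately. Your argument via closedness of $\gra\partial g$ and lower semicontinuity of $F$ is valid but more laborious than needed; the paper's one-line route avoids having to separately verify $x^*\in\dom g$, $\omega(x^*)=0$, and $F(x^*)\le F_\infty$.
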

\begin{proof}
  \cref{asm:holderian_growth} implies that the sequences $(\dist(x_k, X^*))_{k \in \N}$ converges to $0$ and \cref{thm:local_obj} shows that the sequence $(F(x_k))_{k \in \N}$ converges with order at least $1$.
  Therefore, we have some constants $K' \in \N$ and $0 \leq C_5 < 1$ such that \cref{eq:uk_norm_bound_simple} and
  \begin{align}
    F(x_{k+1}) - F_\infty
    \leq
    C_5 
    \prn*{ F(x_k) - F_\infty}
    \label{eq:diff_Fxki_upperbound}
  \end{align}
  hold for all $k \geq K'$.
  Using this bound, we obtain for all $l \geq k \geq K'$,
  \begin{alignat}{2}
    \norm*{x_l - x_k}
    &\leq
    \sum_{i=0}^\infty
    \norm*{x_{k+i+1} - x_{k+i}}
    &\quad&\by{the triangle inequality}\\
    &\leq
    \sum_{i=0}^\infty
    C_1 \dist(x_{k+i}, X^*)
    &\quad&\by{\cref{eq:uk_norm_bound_simple}}\\
    &\leq
    \sum_{i=0}^\infty
    C_1
    \prn*{\frac{r}{\gamma} \prn*{F(x_{k+i}) - F_\infty}}^{1/r}
    &\quad&\by{\cref{asm:holderian_growth}}\\
    &\leq
    \sum_{i=0}^\infty
    C_1
    \prn*{\frac{r}{\gamma} \prn*{F(x_k) - F_\infty}}^{1/r}
    C_5^{i/r}
    &\quad&\by{using \cref{eq:diff_Fxki_upperbound} recursively}\\
    &=
    C_1
    \prn*{\frac{r}{\gamma} \prn*{F(x_k) - F_\infty}}^{1/r}
    \frac{1}{1 - C_5^{1/r}}.
  \end{alignat}
  This upper bound converges to $0$ as $k \to \infty$, and hence $(x_k)_{k \in \N}$ is a Cauchy sequence and converges to a point $x^* \in X^*$.
\end{proof}





\section{Numerical experiments}
\label{sec:experiments}
We compare the proposed algorithm with existing algorithms on three instances of problem~\cref{eq:problem_main}.
We implemented all methods in Python with 
JAX~\citep{jax2018github}
and Flax~\citep{flax2020github}
and executed them on a computer with Apple M1 Chip (8 cores, 3.2 GHz) and 16 GB RAM.

\subsection{Problem setting}
The four instances used in our experiments are described below.

\paragraph{1. Multidimensional Rosenbrock function.}
The first optimization problem is the following:
\begin{align}
  \min_{x = (x_{(1)},\dots,x_{(d)}) \in \R^d} \ 
  \sum_{i=1}^{d-1} \prn*{
    \prn*{x_{(i)} - 1}^2 + 100 \prn*{x_{(i+1)} - x_{(i)}^2}^2
  }.
\end{align}
The objective function is a multidimensional extension of the Rosenbrock function~\citep{rosenbrock1960automatic}.
We set the dimension as $d = 10^4$ and initialized $x$ to $(0.5, \dots, 0.5) \in \R^d$.

\paragraph{2. Supervised classification with MNIST handwritten digit dataset.}
The second optimization problem is to construct a deep linear neural network model for classification problems:
\begin{align}
  \min_{w \in \R^d} \ 
  -
  \frac{1}{N}
  \sum_{i = 1}^N
  \inner{y_i}{\LogSoftmax(\phi_w(x_i))}.
\end{align}
Here, $x_i \in \R^{784}$ is the $i$-th image data, and $y_i \in \set{0, 1}^{10}$ is the corresponding one-hot vector that indicates the class label.
The function $\phi_w: \R^{784} \to \R^{10}$ is a three-layer fully connected neural network with parameter $w \in \R^d$ that receives image data as input and outputs a $10$-dimensional vector.
Two hidden layers have $128$ and $32$ nodes, respectively, and a sigmoid activation function, and thus the dimension of the parameter $w$ is $d = 104{,}938$, including bias parameters.\footnote{
  $d = (784 \times 128 + 128 \times 32 + 32 \times 10) + (128 + 32 + 10) = 104{,}938$.
}
The $i$-th entry of $\LogSoftmax(z)$ is defined by
\begin{align}
  (\LogSoftmax(z))_i
  \coloneqq
  \log \prn*{
    \frac{\exp(z_i)}{\sum_j \exp(z_j)}
  }.
\end{align}
The MNIST dataset contains 60,000 training data, of which $N = 6000$ were randomly chosen for use.
We initialized the parameter $w$ by using \texttt{flax.linen.Module.init} method.\footnote{
  See the documentation, \url{https://flax.readthedocs.io/en/latest/flax.linen.html}, for the detail of \texttt{flax.linen.Module.init}.
}

\paragraph{3. Nonnegative matrix factorization (NMF) with MovieLens 100K dataset.}
The third optimization problem is the following:
\begin{align}
  \min_{U \in \R^{p \times r},\, V \in \R^{q \times r}}
  &\quad
  \frac{1}{N}
  \sum_{(i, j, s) \in \Omega}
  \prn*{\inner{u_i}{v_j} - s}^2
  + \lambda \prn*{
    \norm{U}_{\mathrm F}^2
    + \norm{V}_{\mathrm F}^2
  }\\
  \text{subject to}
  &\quad
  U \geq O,\ V \geq O.
\end{align}
Here, $u_i \in \R^r$ and $v_i \in \R^r$ denotes the $i$-th row of $U$ and $V$, respectively, and the inequalities in the constraints are elementwise.
The set $\Omega$ consists of training data of size $N$; each element $(i, j, s) \in \Omega$ means that user $i \in \set{1,\dots,p}$ rated movie $j \in \set{1,\dots,q}$ as $s \in \set{1,\dots,5}$.
The MovieLens dataset contains 100,000 ratings from $p = 943$ users on $q = 1682$ movies and these ratings are split into $N = 80{,}000$ training data and 20,000 test data.\footnote{
  The dataset provides five train-test splits \texttt{u1} to \texttt{u5}, of which we used \texttt{u1}.
}
We set the dimension of the latent features to $r = 500$ and the regularization parameter to $\lambda = 10^{-10}$.
The dimension of the parameter to be estimated is $pr + qr = 1{,}312{,}500$.
We initialized each entry of $U$ and $V$ by \revise{the uniform distribution on $[0, 10^{-3}]$}.

\revise{
\paragraph{4. Inverse problem on wave equation.}
The fourth is a data assimilation problem for the following nonlinear partial differential equation (PDE) from \citep{bellavia2018levenberg}:
\begin{alignat}{3}
  &
  \frac{\partial^2 u}{\partial t^2} (z, t)
  =
  \frac{\partial^2 u}{\partial z^2} (z, t)
  - e^{u(z, t)},
  &\quad& \forall z, t \in [0, 1],\\
  &
  u(0, t) = u(1, t) = 0,
  &\quad& \forall t \in [0, 1],\\
  &
  \frac{\partial u}{\partial t} (z, 0) = 0,
  &\quad& \forall z \in [0, 1].
\end{alignat}
Let us consider estimating the solution $u(z, 0)$ at time $t = 0$, given some observed data of a function $u$ that follows the PDE.
To solve this problem, we discretize the function $u$ using a $K \times T$ mesh as $u_{i,j} \coloneqq u(i/K, j/T)$, where $0 \leq i \leq K$ and $0 \leq j \leq T$.
Given $(u_{i,0})_{0 \leq i \leq K}$, we compute an estimate $\hat u_{i,j}$ for $u_{i,j}$ via a discrete system for the PDE.
Then, the data assimilation problem can be formulated as the optimization problem:
\begin{align}
  \min_{(u_{i,0})_{0 \leq i \leq K}} \ 
  \frac{1}{N} \sum_{(i, j, v) \in \Omega}
  \prn*{ \hat u_{i,j} - v }^2.
\end{align}
Above, the set $\Omega$ consists of observed data of size $N$; each element $(i, j, v) \in \Omega$ means that the observed value of $u_{i,j}$ is $v$.
Note that $\hat u_{i,j}$ is a function of $(u_{i,0})_{0 \leq i \leq K}$.
We set the parameters as $K = 2^6$, $T = 2^8$, and $N = 2^7$, and set the ground truth of $u(z, 0)$ as
\begin{align}
  u(z, 0)
  =
  \sin(6 \pi z) + 
  \begin{dcases*}
    1 - \cos(20 \pi z) & if $\frac{4}{10} \leq z \leq \frac{5}{10}$, \\
    0 & otherwise.
  \end{dcases*}
\end{align}
The observed data were generated by computing $\hat u_{i,j}$ with the above $u(z, 0)$ and adding random noise that follows the Gaussian distribution $\mathcal N(0, \sigma^2)$ with $\sigma = 10^{-2}$.
We used the Euler method to discretize the PDE.
}




\subsection{Algorithms}
We implemented the following four algorithms.
\begin{itemize}
  \item 
  \textbf{\Proposed (\cref{alg:proposed_backtracking} with \cref{alg:apg}).}
  We set the parameters as $\theta = 0.5$, $\alpha = \bar \alpha = 2$, $\bar \beta = 0.95$, and $\rho_{\min} \in \set{10^{-4}, 10^{-3},\dots, 10^{0}}$, and denote this algorithm by $\Proposed(\rho_{\min})$.
  \revise{Because we can check that $\inf_{x \in \R^d} g(x) = \inf_{y \in \R^n} h(y) = 0$ in all four problem setups presented in the previous section, we set $g^* = h^* = 0$ unless otherwise mentioned.}
  \item 
  \textbf{\DP \citep[Algorithm 6]{drusvyatskiy2019efficiency}.}
  This is an LM method that solves the subproblem with an accelerated method like ours but fixes the damping parameter $\mu$ to a constant, which is an input parameter.
  The method assumes the value of $L_h \sigma^2$ is known, where $L_h$ and $\sigma$ are the same as those in \cref{asm:h_smooth,asm:nablac_bound}.
  Since this value is not actually known, we treat it as another input parameter, $L$.
  We set the parameters as $\mu \in \set{10^{-4}, 10^{-3},\dots, 10^2}$ and $L \in \set{10^{-1}, 10^0,\dots,10^5}$, and denote this algorithm by $\DP(\mu, L)$.
  \item
  \revise{
  \textbf{\ABO \citep[Algorithm 3]{aravkin2022proximal}.}
  This is a trust-region method that employs quasi-Newton models for subproblems.
  We used L-BFGS approximations with memory 5 and solved the subproblem via the proximal gradient method in accordance with the original paper.
  We set the step-size and termination condition for the proximal gradient method as in \citep[Corollary~4.3 and Section~7]{aravkin2022proximal}, respectively.
  The input parameters of \citep[Algorithm 3]{aravkin2022proximal} were set to $\eta_1 = 1/4$, $\eta_2 = 3/4$, $\gamma_1 = \gamma_2 = 1/2$, $\gamma_3 = \gamma_4 = 2$, $\alpha = 1$, $\beta = 2$, and $\Delta_0 = 1$.
  }
  \item 
  \textbf{\PG (proximal gradient).}
  The step-size is automatically selected by backtracking, similarly to $\rho$ in \cref{alg:proposed_backtracking}.
  This algorithm has parameters $\alpha$ and $L_{\min}$ corresponding to $\alpha$ and $\rho_{\min}$ in \cref{alg:proposed_backtracking}.
  We set them as $\alpha = 2$ and $L_{\min} \in \set{10^{-4}, 10^{-3},\dots, 10^{0}}$, and denote this algorithm by $\PG(L_{\min})$.
\end{itemize}
\revise{
See \cref{sec:implementation} for the detail of the implementation.
}

\revise{
We note that \Proposed and \DP assume the smooth term of the objective function to be written as $h(c(\cdot))$ and call oracles on $c$ and $h$, while \ABO and \PG make no such assumptions and can handle more general problems.
}


\subsection{Results}
\cref{fig:rosenbrock_large,fig:mnist,fig:movielens,fig:wave_equation} show the results of the experiments.
We ran each algorithm with several parameter settings as described in the previous section; only the results with the best performing parameters are plotted.
If the best one cannot be determined, multiple lines are plotted.
\revise{In the setup shown in \cref{fig:movielens}, \ABO could not complete the first iteration within the time limit (200 seconds), so the plot was omitted.}
In all \revise{four} instances, the proposed method decreases the objective function value faster than or as fast as \revise{the compared methods.}

\revise{
The main difference between \DP and our method is how to set the damping parameter $\mu$; \DP fixes $\mu$ as a constant while we adaptively choose it at each iteration.
This adaptability is considered one reason for the fast convergence of the proposed method.
\cref{fig:rosenbrock_large} shows that, for \DP, $\mu = 10^1$ is faster in the early stage while $\mu = 10^0$ is faster at the end, which also supports the importance of the adaptability of $\mu$.
}

\revise{
\ABO and \Proposed differ in many respects, the most significant difference being the form of the subproblems.
\ABO updates (a compact form of) approximated Hessian as in the quasi-Newton method, while \Proposed constructs the LM subproblem and uses a Jacobian-vector oracle.
The latter method seems to be more advantageous, especially for large-scale problems.
In fact, \cref{fig:wave_equation} shows that ABO performs well for a small problem with 63 variables, but it performs worse for a large-scale problem with 1312500 variables, as in \cref{fig:movielens}.
Another possible reason our method outperformed \ABO is that our termination condition (\cref{alg-line-apg:termination_cond} of \cref{alg:apg}) for the subproblem was carefully designed based on theoretical analysis.
Generally speaking, solving subproblems too accurately or inaccurately will degrade the overall performance of algorithms; our condition enables us to solve the subproblem with just the right amount of accuracy.
We should mention that \ABO can be applied even if $g$ is nonconvex or if the smooth term of the objective function is not a composite function.
}%

\revise{%
While \PG is a first-order method and thus cannot hope for local superlinear convergence, the proposed method achieves the theoretical guarantee of locally fast convergence.
This local convergence of our method was also observed in the experiments.
The figures on the right side of \cref{fig:rosenbrock_large,fig:movielens} suggest that \Proposed converges superlinearly.
}


\begin{figure}
  \centering%
  \subcaptionbox{Rosenbrock\label{fig:rosenbrock_large}}
  [\linewidth]{%
    \includegraphics[width=0.49\linewidth]{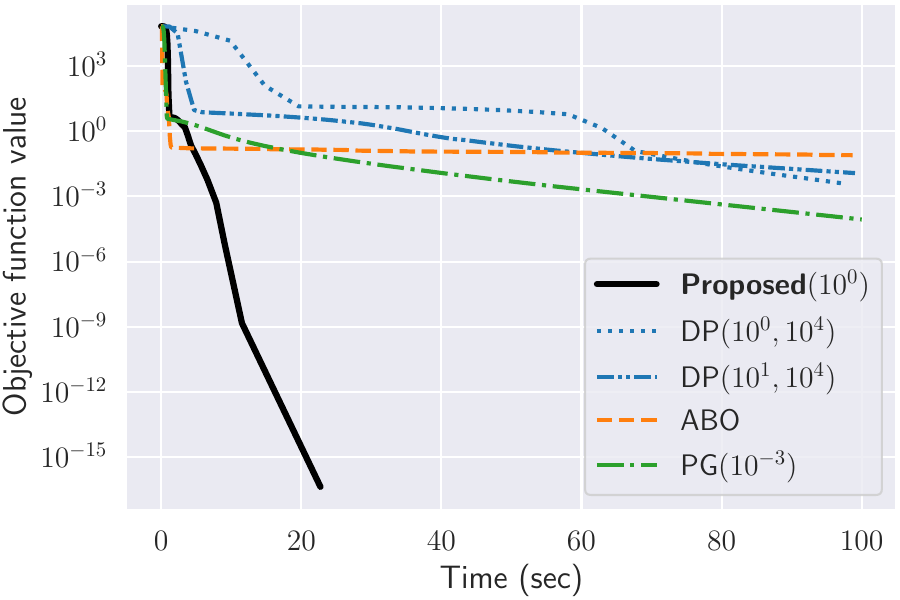}\hfill%
    \includegraphics[width=0.49\linewidth]{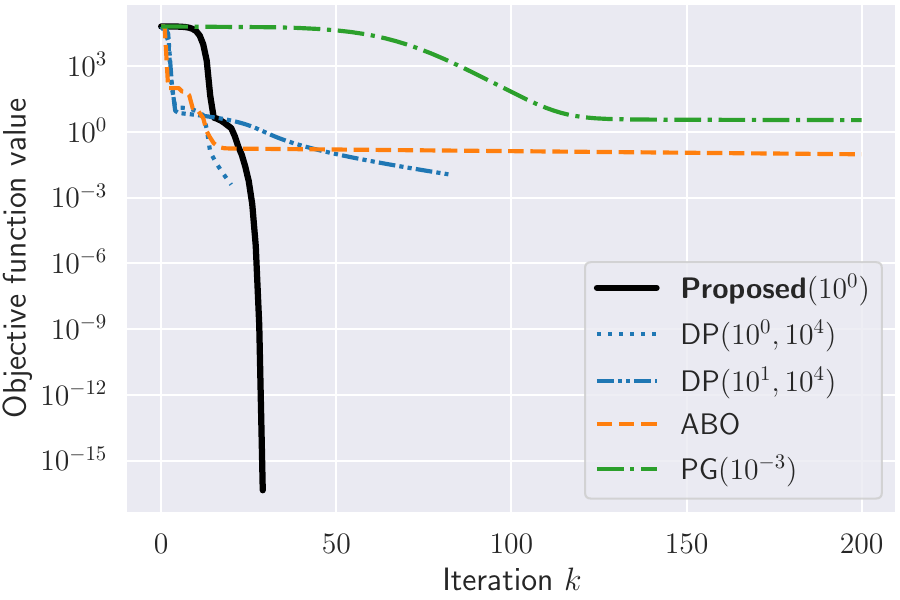}%
  }\par\medskip%
  \subcaptionbox{Classification\label{fig:mnist}}
  [\linewidth]{%
    \includegraphics[width=0.49\linewidth]{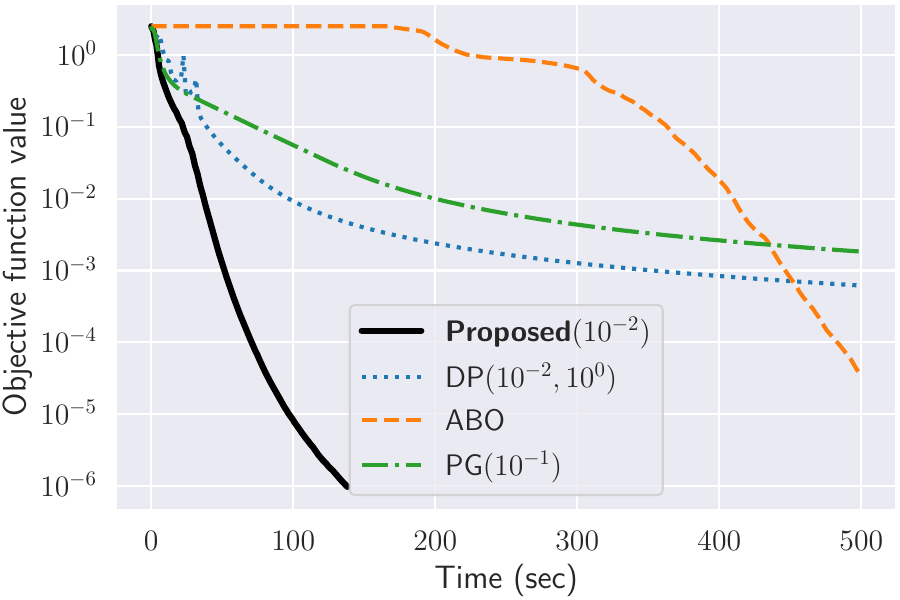}\hfill%
    \includegraphics[width=0.49\linewidth]{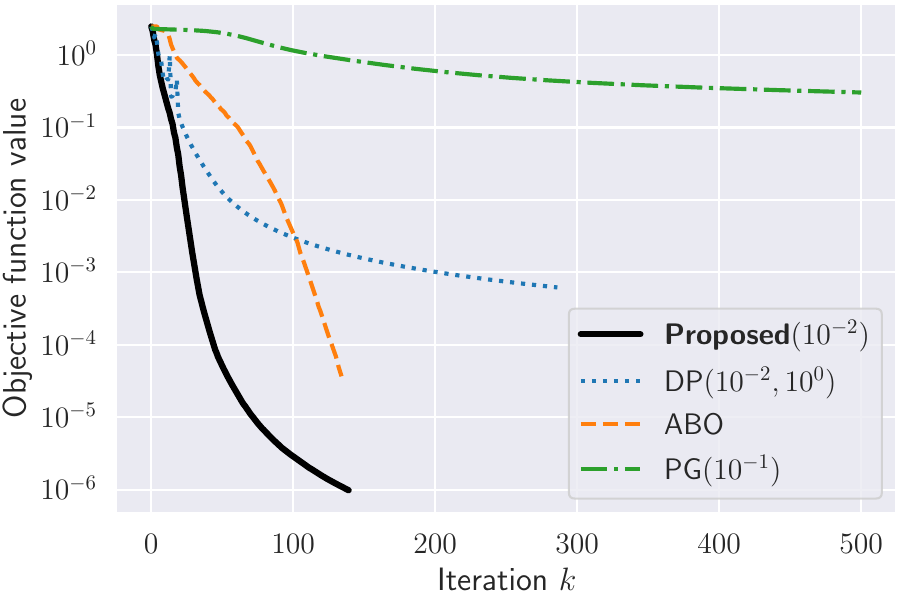}%
  }\par\medskip%
  \subcaptionbox{NMF\label{fig:movielens}}
  [\linewidth]{%
    \includegraphics[width=0.49\linewidth]{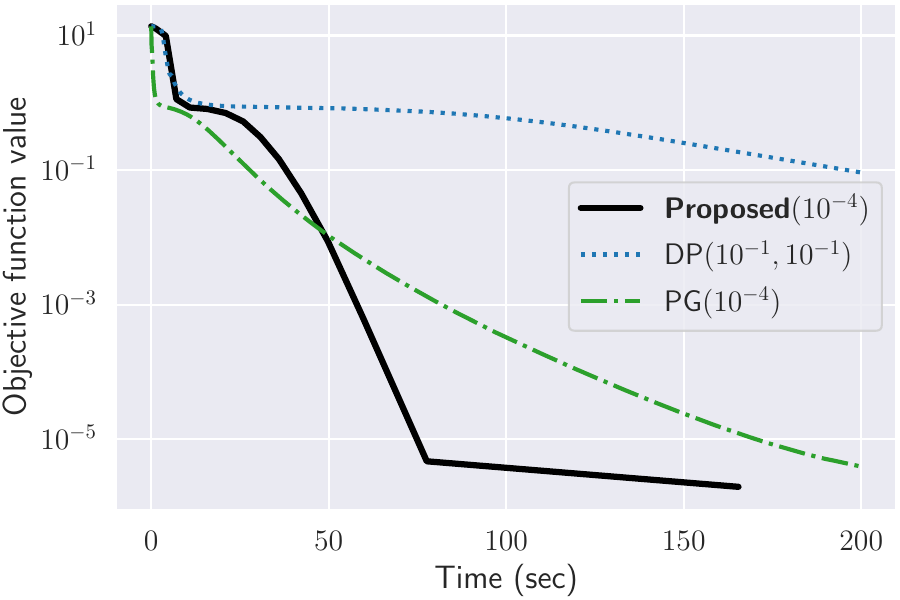}\hfill%
    \includegraphics[width=0.49\linewidth]{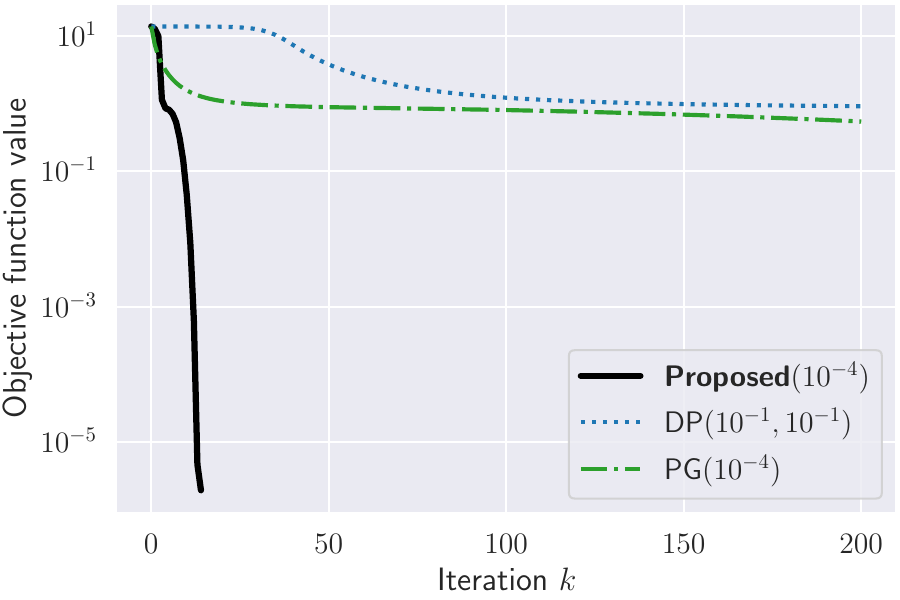}%
  }\par\medskip%
  \subcaptionbox{Wave equation\label{fig:wave_equation}}
  [\linewidth]{%
    \includegraphics[width=0.49\linewidth]{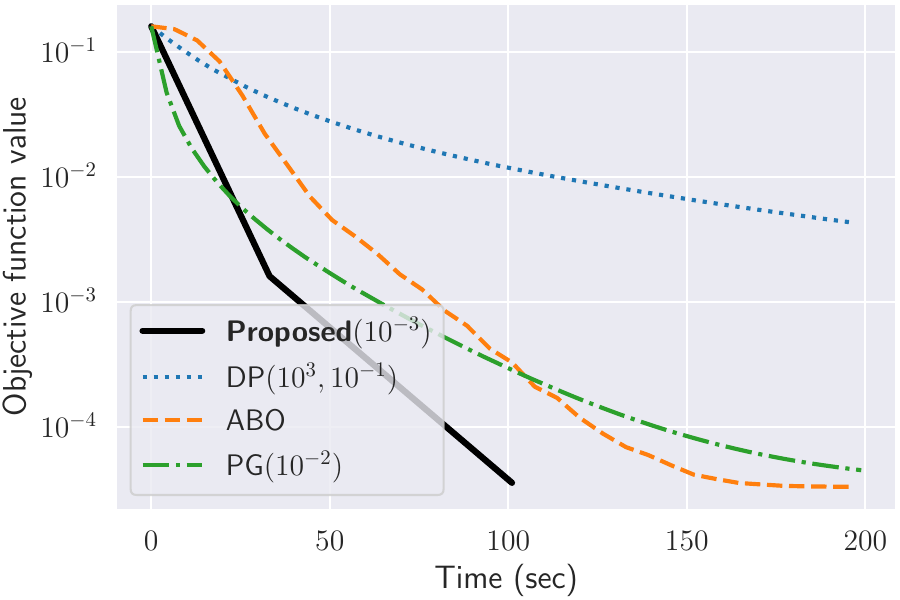}\hfill%
    \includegraphics[width=0.49\linewidth]{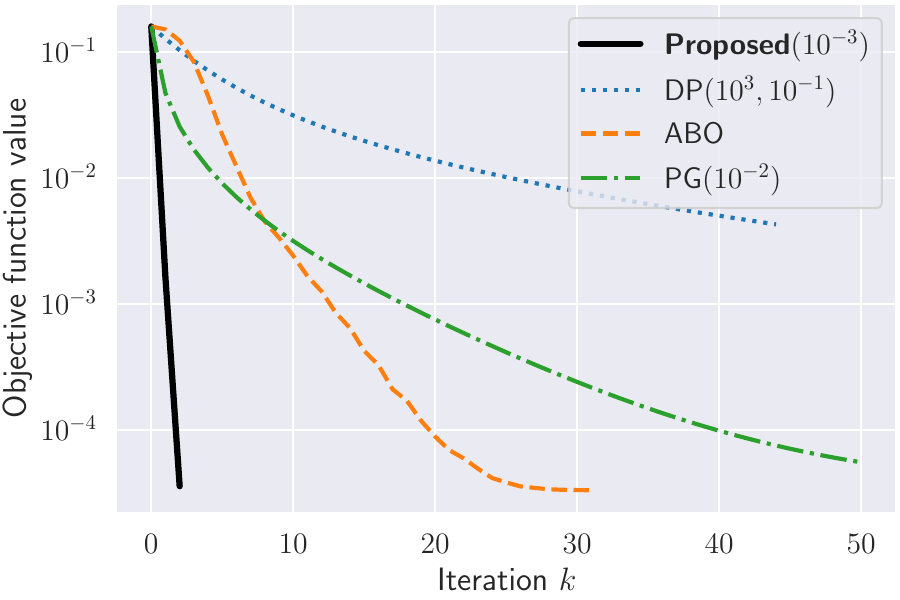}%
  }\par%
  \caption{
    Numerical results of each instance and method.
  }
  \label{fig:exp_methods}
\end{figure}



\begin{figure}[t]
  \centering%
  \subcaptionbox{Rosenbrock}
  [0.49\linewidth]{%
    \includegraphics[width=\linewidth]{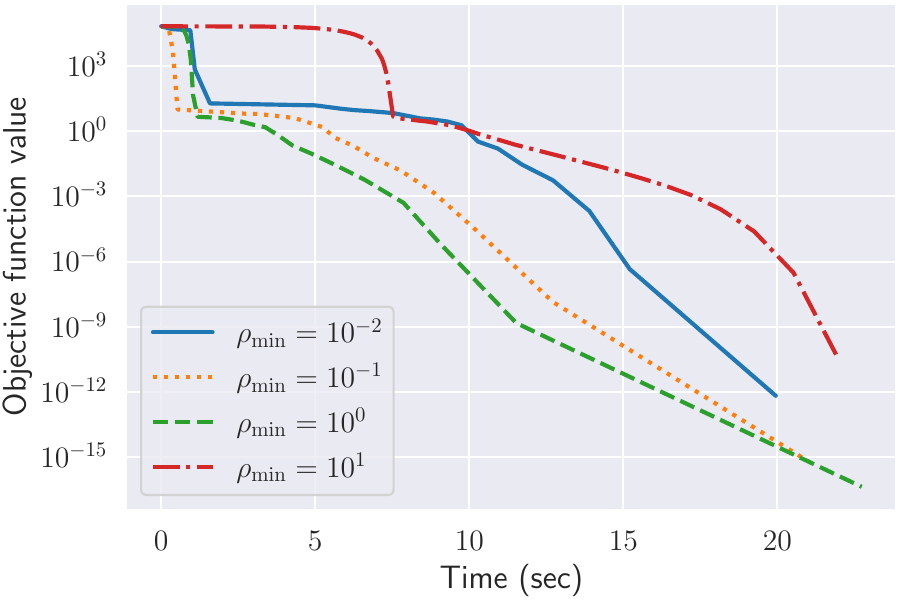}%
  }\hfill%
  \subcaptionbox{Classification}
  [0.49\linewidth]{%
    \includegraphics[width=\linewidth]{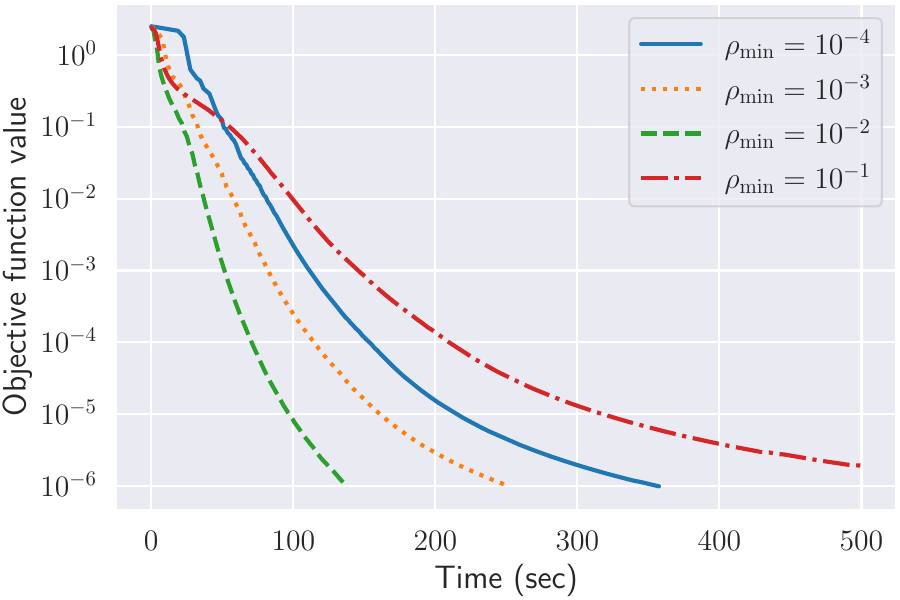}%
  }\par\medskip%
  \subcaptionbox{NMF}
  [0.49\linewidth]{%
    \includegraphics[width=\linewidth]{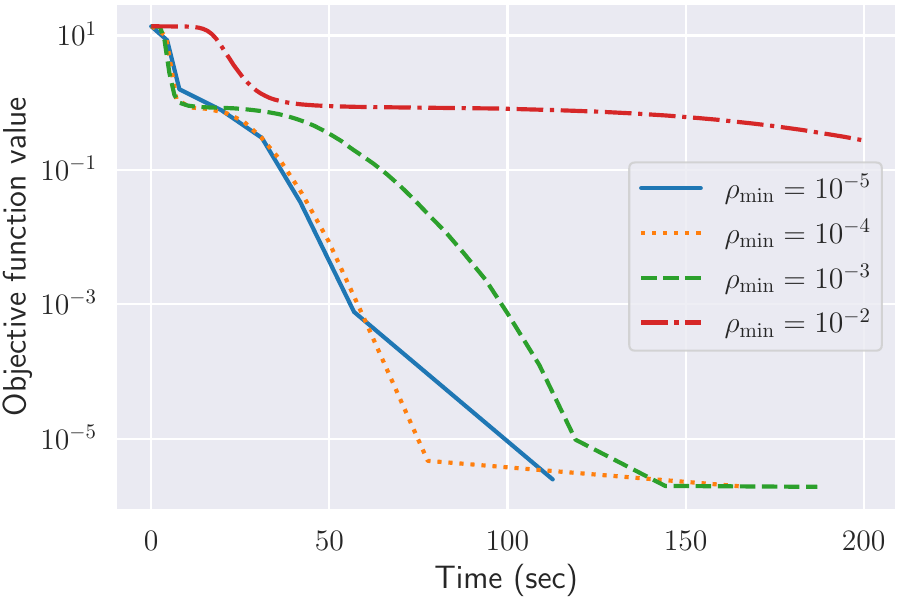}%
  }\hfill%
  \subcaptionbox{Wave equation}
  [0.49\linewidth]{%
    \includegraphics[width=\linewidth]{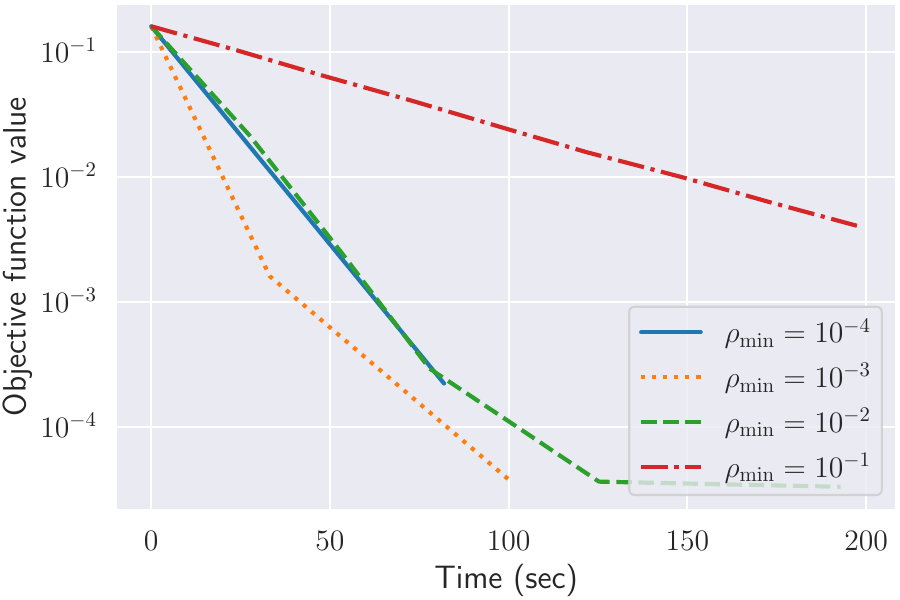}%
  }\par%
  \caption{
    Performance of the proposed algorithm with different values of $\rho_{\mathrm{min}}$.
  }
  \label{fig:exp_rhomin}
\end{figure}

\revise{
\paragraph{Sensitivity to $\rho_{\min}$.}
The proposed algorithm has an input parameter $\rho_{\min}$.
\Cref{fig:exp_rhomin} shows the performance of the proposed algorithm with several settings of $\rho_{\min}$.
The results suggest that setting $\rho_{\min}$ too large can deteriorate performance to a greater extent than setting $\rho_{\min}$ too small.
This is because \cref{alg:proposed_backtracking} can increase $\rho$ but not decrease it.
A practical (but not theoretically grounded) way to reduce the sensitivity of \cref{alg:proposed_backtracking} to $\rho_{\min}$ is to insert a step to decrease $\rho$ in \cref{alg-line-bt:update_xk_muk}.
}

\revise{
\paragraph{Sensitivity to $g^* + h^*$.}
The proposed algorithm has other input parameters, $g^*$ and $h^*$, that should be set as in \cref{eq:requirement_gast_hast}.
So far, we have set $g^* = h^* = 0$ since we know that $\inf_{x \in \R^d} g(x) = \inf_{y \in \R^n} h(y) = 0$ in all four experimental settings.
Here, let us hypothetically consider the case where we do not know the infimum of $g$ or $h$.
A practical choice in such cases is to compute numerically the minimum of the convex functions $g$ and $h$.
Numerical solutions often give an upper bound, but subtracting a small value from the upper bound is expected to result in a lower bound.
Assuming such a case, we tested the proposed algorithm with different values of $g^* + h^*$, and the results are presented in \Cref{fig:exp_ghmin}.\footnote{
  Since \Cref{alg:proposed_backtracking} uses $g^*$ and $h^*$ only in the form $g^* + h^*$, we need only to focus on the value of $g^* + h^*$.
}
The results suggest that the algorithm's performance is not significantly affected as long as $g^* + h^*$ is close to $0$ (i.e., $\inf_{x \in \R^d} g(x) + \inf_{y \in \R^n} h(y)$).
However, if their differences become too large, the performance deteriorates.
Note that we should consider the scale of the function value to compare the magnitude of the value of $g^* + h^*$.
For example, in the NMF setting, the function $h$ has the form
\begin{align}
  h(u, v, w)
  = 
  \frac{1}{N} \norm*{w}^2
  + \lambda \prn*{ \norm*{u}^2 + \norm*{v}^2 }
\end{align}
with $N = 8 \times 10^5$ and $\lambda = 10^{-10}$, which suggests that $-10^0$ is too small as $g^* + h^*\, (< 0)$ considering the scale of the function. }

\begin{figure}[t]
  \centering%
  \subcaptionbox{Rosenbrock}
  [0.49\linewidth]{%
    \includegraphics[width=\linewidth]{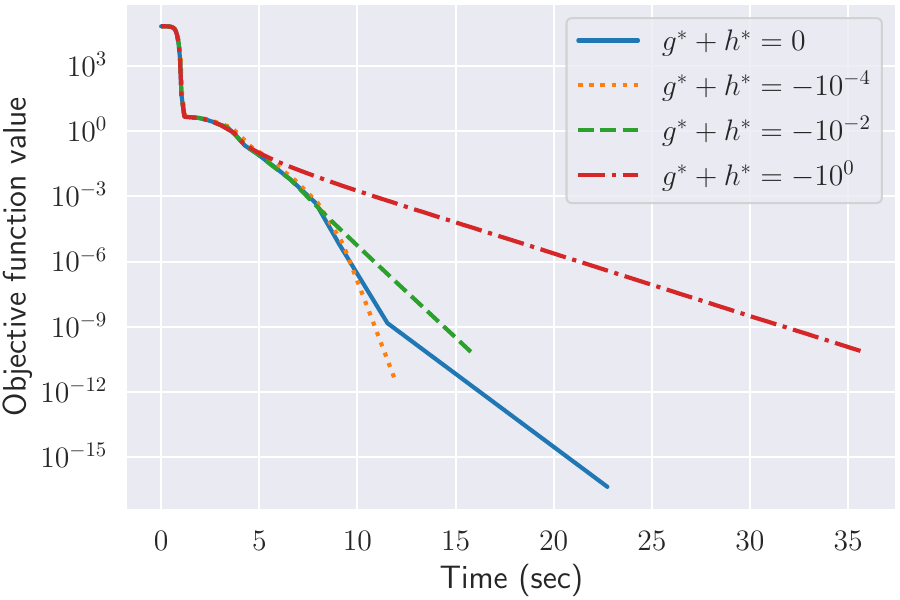}%
  }\hfill%
  \subcaptionbox{Classification}
  [0.49\linewidth]{%
    \includegraphics[width=\linewidth]{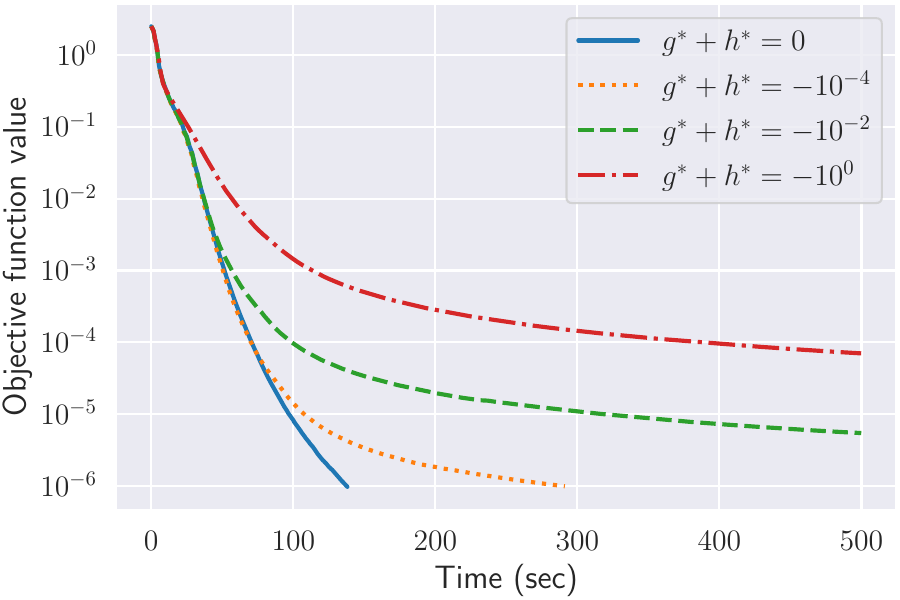}%
  }\par\medskip%
  \subcaptionbox{NMF}
  [0.49\linewidth]{%
    \includegraphics[width=\linewidth]{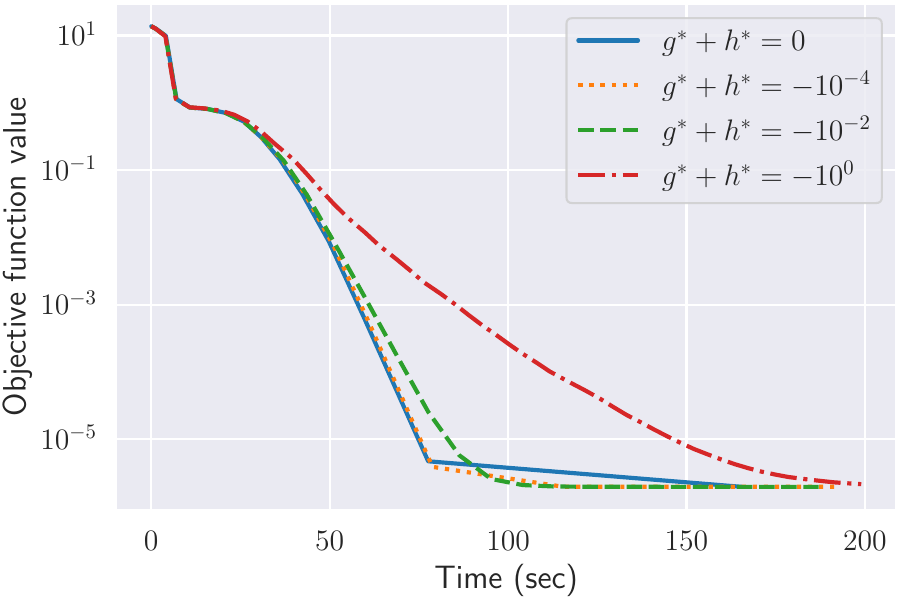}%
  }\hfill%
  \subcaptionbox{Wave equation}
  [0.49\linewidth]{%
    \includegraphics[width=\linewidth]{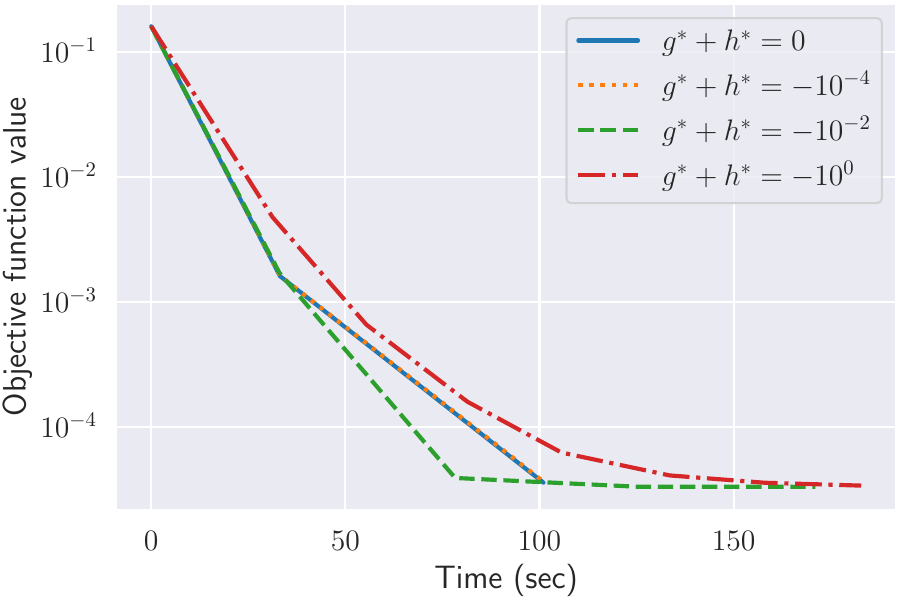}%
  }\par%
  \caption{
    Performance of the proposed algorithm with different values of $g^* + h^*$.
  }
  \label{fig:exp_ghmin}
\end{figure}

\section{Conclusion}
\label{sec:conclusion}
We proposed a new LM method (\cref{alg:proposed_backtracking}) for minimizing the sum of a convex function and a smooth composite function, problem \cref{eq:problem_main}.
The proposed method enjoys the following three theoretical guarantees: iteration complexity bound, oracle complexity bound when combined with an accelerated method (\cref{alg:apg}), and local convergence under the H\"olderian growth condition.
As shown in \cref{table:complexity}, our iteration complexity bound improves existing bounds by $\Theta(\kappa)$, and our oracle complexity bound improves by $\tilde \Theta(\sqrt{\kappa})$ for non-Lipschitz $h$, where $\kappa$ is a constant like the condition number.
Our local convergence results include local quadratic convergence under the quadratic growth condition; this is the first to extend the classical result \citep{yamashita2001rate} for least-squares problems, $h(\cdot) = \frac{1}{2} \norm*{\cdot}^2$, to a general function $h$.
In addition, this is the first LM method with both an oracle complexity bound and local quadratic convergence for smooth problems to the best of our knowledge.

Our numerical experiments show that our method is effective for large-scale problems with $10^4$ to $10^6$ variables, but there is potential for further speed-up, mainly when the functions $h$ and $c$ can be decomposed as follows:
\begin{align}
  h(y_1,\dots, y_N)
  =
  \frac{1}{N} \sum_{i=1}^N h_i (y_i),\quad
  c(x)
  = (c_1(x), \dots, c_N(x)).
\end{align}
Such a finite-sum setting often appears in machine learning, and stochastic methods \citep{lan2020first} effectively solve the problem when $N$ is large.
An interesting direction for future work would be to investigate whether oracle complexity bound and local convergence can still be achieved with stochastic LM methods.

\begin{acknowledgements}
  \revise{We are deeply grateful to the anonymous reviewers, who carefully read the manuscript and provided helpful and constructive comments.}
\end{acknowledgements}

\section*{Code availability}
The code used for this article is found in \url{https://github.com/n-marumo/accelerated-lm}.

\section*{Conflict of interest}
The authors declare that they have no conflict of interest.

\bibliographystyle{abbrvnat}
\bibliography{myrefs}   

\appendix

\section{Lemmas for analysis}
\label{sec:lemmas}
We show several lemmas used for our analysis.
First, the following two lemmas may be standard in optimization.
\begin{lemma}
  Suppose that \cref{asm:hc} holds.
  Then, the following hold:
  \begin{alignat}{2}
    h(y) - h(x)
    &\leq
    \inner{\nabla h(z)}{y - x} + \frac{L_h}{2} \norm*{y - z}^2,
    &\quad&
    \forall x, y, z \in \R^n,
    \label{eq:lem_three-points}\\
    h(x) - h^*
    &\geq
    \frac{1}{2L_h} \norm*{\nabla h(x)}^2,
    &\quad&
    \forall x \in \R^n.
    \label{eq:lem_smoothness_obj_grad-norm_bound}
  \end{alignat}
\end{lemma}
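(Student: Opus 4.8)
The plan is to derive both inequalities from the elementary \emph{descent lemma} implied by \cref{asm:h_smooth}: for all $x, y \in \R^n$,
\begin{align}
  h(y) \leq h(x) + \inner{\nabla h(x)}{y - x} + \frac{L_h}{2} \norm*{y - x}^2.
  \label{eq:descentlem}
\end{align}
I would establish \cref{eq:descentlem} by writing $h(y) - h(x) = \int_0^1 \inner{\nabla h(x + s(y - x))}{y - x}\,ds$, subtracting $\inner{\nabla h(x)}{y-x} = \int_0^1 \inner{\nabla h(x)}{y-x}\,ds$, and bounding the resulting integrand using the Lipschitz continuity of $\nabla h$ together with Cauchy--Schwarz; integrating $L_h s \norm*{y-x}^2$ over $s \in [0,1]$ produces the factor $\frac{L_h}{2}$.

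For the three-point inequality \cref{eq:lem_three-points}, I would split $h(y) - h(x) = (h(y) - h(z)) + (h(z) - h(x))$ and bound each piece around the common center $z$. Applying \cref{eq:descentlem} with center $z$ and evaluation point $y$ gives $h(y) - h(z) \leq \inner{\nabla h(z)}{y - z} + \frac{L_h}{2}\norm*{y - z}^2$, while the convexity of $h$ (\cref{asm:h_convex}) yields the subgradient inequality $h(x) \geq h(z) + \inner{\nabla h(z)}{x - z}$, equivalently $h(z) - h(x) \leq \inner{\nabla h(z)}{z - x}$. Summing the two bounds collapses the inner-product terms into $\inner{\nabla h(z)}{(y - z) + (z - x)} = \inner{\nabla h(z)}{y - x}$, which is exactly the claim. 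The only point requiring care is bookkeeping: both the quadratic step and the convexity step must be centered at the same point $z$ so that the gradients combine.

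For the gradient-norm bound \cref{eq:lem_smoothness_obj_grad-norm_bound}, I would minimize the right-hand side of \cref{eq:descentlem} over $y$. The quadratic $y \mapsto \inner{\nabla h(x)}{y - x} + \frac{L_h}{2}\norm*{y - x}^2$ attains its minimum at $y = x - \frac{1}{L_h}\nabla h(x)$, where its value equals $-\frac{1}{2 L_h}\norm*{\nabla h(x)}^2$. Substituting this $y$ into \cref{eq:descentlem} gives $h(y) \leq h(x) - \frac{1}{2 L_h}\norm*{\nabla h(x)}^2$, and since $h(y) \geq h^*$ by the definition of $h^*$ in \cref{eq:def_Deltak_gast_hast_Fast} (finite by \cref{asm:h_bounded}), rearranging produces the stated inequality. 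Note that this half uses only smoothness and lower-boundedness, not convexity.

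I expect no genuine obstacle here, as both results are textbook consequences of \cref{eq:descentlem}; the entire difficulty is concentrated in proving the descent lemma itself cleanly from \cref{asm:h_smooth} and, for \cref{eq:lem_three-points}, in correctly pairing the quadratic upper bound with the convexity lower bound at the shared center $z$.
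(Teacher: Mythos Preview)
Your proposal is correct and follows essentially the same approach as the paper: both parts are derived from the standard descent lemma, with \cref{eq:lem_three-points} obtained by adding the smoothness bound $h(y)-h(z)\leq\inner{\nabla h(z)}{y-z}+\frac{L_h}{2}\norm{y-z}^2$ to the convexity inequality $h(z)-h(x)\leq\inner{\nabla h(z)}{z-x}$, and \cref{eq:lem_smoothness_obj_grad-norm_bound} obtained by plugging $y=x-\frac{1}{L_h}\nabla h(x)$ into the descent lemma and using $h(y)\geq h^*$. The only cosmetic difference is that the paper cites Nesterov for the descent lemma while you sketch its integral proof.
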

\begin{proof}
  From \citep[Eq.~(2.1.9)]{nesterov2018lectures}, we have
  \begin{align}
    h(y) - h(z)
    \leq
    \inner{\nabla h(z)}{y - z} + \frac{L_h}{2} \norm*{y - z}^2,\quad
    \forall y, z \in \R^n.
    \label{eq:nesterov_219}
  \end{align}
  On the other hand, from \cref{asm:h_convex}, we have
  \begin{align}
    h(z) - h(x)
    \leq
    \inner{\nabla h(z)}{z - x},\quad
    \forall x, z \in \R^n.
  \end{align}
  Putting these inequalities together yields \cref{eq:lem_three-points}.
  \revise{Using \cref{eq:requirement_gast_hast} and} taking $y = z - \frac{1}{L_h} \nabla h(z)$ for \cref{eq:nesterov_219} yields
  \begin{align}
    h^* - h(z)
    \leq
    h(y) - h(z)
    \leq
    - \frac{1}{2 L_h} \norm*{\nabla h(z)}^2,\quad
    \forall z \in \R^n,
  \end{align}
  which is equivalent to \cref{eq:lem_smoothness_obj_grad-norm_bound}.
\end{proof}
\begin{lemma}
  \label{lem:norn_c_lin_diff_upperbound}
  Suppose that \cref{asm:hc} holds.
  Then, the following holds:
  \begin{align}
    \norm*{c(y) - c(x) - \nabla c(x) (y - x)}
    \leq
    \frac{L_c}{2} \norm*{y - x}^2,\quad
    \forall x, y \in \dom g.
    \label{eq:property_nabla-c_lip}
  \end{align}
\end{lemma}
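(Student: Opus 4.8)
The plan is to prove this Taylor-type remainder bound in the standard way: express the residual $c(y) - c(x) - \nabla c(x)(y-x)$ as an integral of the Jacobian increment along the segment joining $x$ and $y$, and then control that increment using the Lipschitz hypothesis \cref{asm:nabla-c_lip}. The estimate is the vector-valued analogue of the scalar descent lemma, so the computation itself is routine; the only point requiring genuine care is ensuring that the Lipschitz bound is applicable along the entire path of integration.

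First I would fix $x, y \in \dom g$ and observe that, since $g$ is proper closed convex by \cref{asm:g_convex}, its effective domain $\dom g$ is convex. Hence the whole segment $x + t(y-x)$ for $t \in [0,1]$ lies in $\dom g$, which is precisely the set on which \cref{asm:nabla-c_lip} provides a Lipschitz estimate for $\nabla c$. This convexity observation is the crux of the argument: without it the operator-norm bound could not be invoked at the intermediate points, and the integral representation below would be useless.

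Next I would apply the fundamental theorem of calculus to the smooth curve $t \mapsto c(x + t(y-x))$ to obtain
\begin{align}
  c(y) - c(x) = \int_0^1 \nabla c(x + t(y-x))(y-x)\, dt,
\end{align}
and then subtract the constant term $\nabla c(x)(y-x) = \int_0^1 \nabla c(x)(y-x)\, dt$, which gives
\begin{align}
  c(y) - c(x) - \nabla c(x)(y-x)
  = \int_0^1 \prn[\big]{\nabla c(x + t(y-x)) - \nabla c(x)}(y-x)\, dt.
\end{align}
Taking norms, moving the norm inside the integral, and bounding each integrand by the operator norm of the Jacobian difference times $\norm*{y-x}$ reduces the estimate to controlling $\normop*{\nabla c(x + t(y-x)) - \nabla c(x)}$.

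Finally I would invoke \cref{asm:nabla-c_lip} to bound this operator norm by $L_c \norm*{t(y-x)} = L_c\, t\, \norm*{y-x}$, so that the integrand is at most $L_c\, t\, \norm*{y-x}^2$; integrating $t$ over $[0,1]$ produces the factor $\tfrac{1}{2}$ and hence the claimed bound $\tfrac{L_c}{2}\norm*{y-x}^2$. I do not anticipate a real obstacle in this lemma: the argument is the classical integral-remainder estimate adapted to a vector-valued $c$ and the operator norm, and the sole subtlety is the domain-convexity step noted above, which the convexity of $g$ supplies for free.
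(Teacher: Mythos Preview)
Your proposal is correct and follows the standard integral-remainder argument, which is precisely what the paper intends: its proof simply refers to \citep[Lemma 1.2.3]{nesterov2018lectures}, whose proof is exactly this computation. Your explicit observation that the convexity of $\dom g$ (from \cref{asm:g_convex}) keeps the segment inside the set where \cref{asm:nabla-c_lip} applies is a worthwhile addition that the paper leaves implicit.
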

\begin{proof}
  We can prove this lemma in the same way as \citep[Lemma 1.2.3]{nesterov2018lectures}.
\end{proof}

Next, the following lemma bounds the function value using the first-order optimality measures defined in \cref{eq:def_error}.
\begin{lemma}
  \label{lem:proj-grad_norm_lowerbound}
  Suppose that \cref{asm:hc} holds.
  Then, the following hold:
  \begin{alignat}{2}
    g(y) - g(x) + \inner{\nabla H(x)}{y - x}
    &\geq - \omega(x) \norm*{y - x},
    &\quad&
    \forall x, y \in \dom g,
    \label{eq:proj-grad_norm_lowerbound}\\
    \bar F_{k, \mu}(y) - \bar F_{k, \mu}(x)
    - \frac{\mu}{2} \norm*{y - x}^2
    &\geq
    - \bar{\omega}_{k, \mu}(x) \norm*{y - x},
    &\quad&
    \forall x, y \in \dom g.
    \label{eq:proj-grad_norm_lowerbound_sub}
  \end{alignat}
\end{lemma}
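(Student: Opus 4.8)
The plan is to handle both inequalities by the same device: choose the subgradient $p \in \partial g(x)$ attaining the minimum in the definition \cref{eq:def_error} of the optimality measure, combine the subdifferential inequality for $g$ with a (strong) convexity inequality for the smooth part, and finish with Cauchy--Schwarz. If $\partial g(x)$ is empty the optimality measures equal $+\infty$ and both bounds are trivial, so I would assume $\partial g(x) \neq \emptyset$; the minimum is then attained because $\partial g(x)$ is closed and convex and the objective is a continuous coercive norm.

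For \cref{eq:proj-grad_norm_lowerbound}, first I would pick $p \in \partial g(x)$ with $\omega(x) = \norm*{p + \nabla H(x)}$. The subdifferential inequality \cref{eq:def_subdif_g} gives $g(y) - g(x) \geq \inner{p}{y - x}$, so adding $\inner{\nabla H(x)}{y - x}$ to both sides yields $g(y) - g(x) + \inner{\nabla H(x)}{y - x} \geq \inner{p + \nabla H(x)}{y - x}$. Cauchy--Schwarz then bounds the right-hand side below by $-\norm*{p + \nabla H(x)} \norm*{y - x} = -\omega(x) \norm*{y - x}$, which is the claim.

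For \cref{eq:proj-grad_norm_lowerbound_sub}, the extra ingredient is the $\mu$-strong convexity of the smooth part. From \cref{eq:def_H_Hbar}, $\bar H_{k, \mu}(x) = h(c(x_k) + \nabla c(x_k)(x - x_k)) + \frac{\mu}{2} \norm*{x - x_k}^2$; the first term is convex as $h$ (convex by \cref{asm:h_convex}) precomposed with an affine map, and the quadratic term makes $\bar H_{k, \mu}$ $\mu$-strongly convex, so the standard lower bound gives $\bar H_{k, \mu}(y) - \bar H_{k, \mu}(x) \geq \inner{\nabla \bar H_{k, \mu}(x)}{y - x} + \frac{\mu}{2} \norm*{y - x}^2$. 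Choosing $p \in \partial g(x)$ attaining $\bar\omega_{k, \mu}(x)$ and again using $g(y) - g(x) \geq \inner{p}{y - x}$, I would add the two inequalities and regroup:
\[
  \bar F_{k, \mu}(y) - \bar F_{k, \mu}(x)
  \geq
  \inner{p + \nabla \bar H_{k, \mu}(x)}{y - x} + \frac{\mu}{2} \norm*{y - x}^2
  \geq
  - \bar\omega_{k, \mu}(x) \norm*{y - x} + \frac{\mu}{2} \norm*{y - x}^2,
\]
the last step being Cauchy--Schwarz. Subtracting $\frac{\mu}{2} \norm*{y - x}^2$ from both sides gives the stated bound. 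Neither step is a genuine obstacle; the only point deserving a moment's care is verifying that $\bar H_{k, \mu}$ is strongly convex with exactly the constant $\mu$, so that the quadratic remainder carries the right coefficient, which follows by combining the gradient inequality for the convex composition with the exact second-order expansion of the quadratic damping term.
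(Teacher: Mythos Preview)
Your proposal is correct and follows essentially the same route as the paper: use the subgradient inequality for $g$, (strong) convexity of the smooth part, and Cauchy--Schwarz with the minimizing $p$. The only cosmetic difference is that the paper phrases the first step as ``for all $p \in \partial g(x)$'' and then takes the minimum, whereas you pick a minimizing $p$ upfront; your extra remarks about $\partial g(x) = \emptyset$ and attainment of the minimum are sound but not in the paper.
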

\begin{proof}
  For all $p \in \partial g(x)$, we have
  \begin{alignat}{2}
    g(y) - g(x) + \inner{\nabla H(x)}{y - x}
    \geq
    \inner{p + \nabla H(x)}{y - x}
    \geq
    - \norm*{p + \nabla H(x)} \norm*{y - x},
  \end{alignat}
  and hence
  \begin{alignat}{2}
    g(y) - g(x) + \inner{\nabla H(x)}{y - x}
    \geq
    - \min_{p \in \partial g(x)} \norm*{p + \nabla H(x)} \norm*{y - x}
    =
    - \omega(x) \norm*{y - x},
  \end{alignat}
  which completes the proof of \cref{eq:proj-grad_norm_lowerbound}.
  Similarly to \cref{eq:proj-grad_norm_lowerbound}, we have
  \begin{align}
    g(y) - g(x) + \inner{\nabla \bar H_{k, \mu}(x)}{y - x}
    \geq
    - \bar{\omega}_{k, \mu}(x) \norm*{y - x},
  \end{align}
  and on the other hand, we have
  \begin{align}
    \bar H_{k, \mu}(y) - \bar H_{k, \mu}(x)
    \geq
    \inner{\nabla \bar H_{k, \mu}(x)}{y - x}
    + \frac{\mu}{2} \norm*{y - x}^2
  \end{align}
  since $\bar H_{k, \mu}$ is strongly convex.
  Combining these bounds yields
  \begin{align}
    \bar F_{k, \mu}(y) - \bar F_{k, \mu}(x)
    &=
    g(y) - g(x) + \bar H_{k, \mu}(y) - \bar H_{k, \mu}(x)\\
    &\geq
    \frac{\mu}{2} \norm*{y - x}^2
    - \bar{\omega}_{k, \mu}(x) \norm*{y - x},
  \end{align}
  which completes the proof of \cref{eq:proj-grad_norm_lowerbound_sub}.
\end{proof}

The following lemma is used to bound the oracle complexity of our algorithm.
\begin{lemma}
  Let 
  \begin{align}
    P(z)
    \coloneqq 
    1 + \sqrt{z} \log z.
    \label{eq:def_A_P}
  \end{align}
  Then,
  \begin{enuminlem}
    \item
    \label{lem:increasing_Pz}
    $P(z)$ is increasing with respect to $z \geq 1$;
    \item
    \label{lem:concave_Pz}
    $P(z)$ is concave with respect to $z \geq 1$;
    \item
    \label{lem:increasing_zP}
    $\frac{1}{z} P(1 + z)$ is decreasing with respect to $z > 0$.
  \end{enuminlem}
\end{lemma}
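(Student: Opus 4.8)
The plan is to verify each claim by elementary calculus, differentiating $P$ one or two times and checking signs on the relevant domain. For \ref{lem:increasing_Pz}, I would compute
\[
  P'(z)
  =
  \frac{\log z}{2\sqrt{z}}
  + \frac{1}{\sqrt{z}}
  =
  \frac{\log z + 2}{2\sqrt{z}}.
\]
On $z \geq 1$ we have $\log z \geq 0$, so the numerator $\log z + 2 > 0$ and the denominator is positive; hence $P'(z) > 0$ and $P$ is increasing. This step is routine.

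For \ref{lem:concave_Pz}, I would differentiate once more. Writing $P'(z) = \tfrac12 z^{-1/2}\log z + z^{-1/2}$, term-by-term differentiation gives
\[
  P''(z)
  =
  \frac{1}{2}\prn*{-\frac{1}{2} z^{-3/2}\log z + z^{-3/2}}
  - \frac{1}{2} z^{-3/2}
  =
  z^{-3/2}\prn*{\frac{1}{2} - \frac{1}{4}\log z - \frac{1}{2}}
  =
  - \frac{\log z}{4 z^{3/2}}.
\]
For $z \geq 1$ we have $\log z \geq 0$, so $P''(z) \leq 0$, giving concavity. I would note that equality holds only at $z = 1$, which is fine for concavity on the closed domain.

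For \ref{lem:increasing_zP}, set $Q(z) \coloneqq \tfrac{1}{z} P(1+z)$ for $z > 0$ and show $Q'(z) < 0$. Using the quotient rule,
\[
  Q'(z)
  =
  \frac{z\, P'(1+z) - P(1+z)}{z^2},
\]
so it suffices to prove that the numerator $N(z) \coloneqq z\,P'(1+z) - P(1+z)$ is negative for all $z > 0$. The natural approach is to observe $N(0) = 0\cdot P'(1) - P(1) = -1 < 0$ and then examine $N'(z) = P'(1+z) + z\,P''(1+z) - P'(1+z) = z\,P''(1+z)$. Since $P''(1+z) \leq 0$ by \ref{lem:concave_Pz} and $z > 0$, we get $N'(z) \leq 0$, so $N$ is nonincreasing; combined with $N(0) = -1 < 0$ this yields $N(z) < 0$ for all $z > 0$, whence $Q'(z) < 0$ and $Q$ is decreasing.

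The only mild subtlety, and the step I would be most careful about, is \ref{lem:increasing_zP}: rather than computing $Q'$ directly and wrestling with the resulting expression in $\log(1+z)$ and $\sqrt{1+z}$, I would exploit the concavity established in \ref{lem:concave_Pz} through the auxiliary function $N$. The key observation is that $\tfrac{1}{z}P(1+z)$ is (up to the shift) a secant-slope of $P$ from the point $1$, and decay of such slopes is exactly what concavity of $P$ delivers; phrasing the argument via $N(z) = z\,P'(1+z) - P(1+z)$ and its monotonicity makes this precise and avoids any messy algebra.
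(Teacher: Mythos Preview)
Your proofs of \ref{lem:increasing_Pz} and \ref{lem:concave_Pz} are exactly what the paper does: the same derivative computations and sign checks.

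For \ref{lem:increasing_zP}, your argument is correct but takes a genuinely different route from the paper. The paper computes $Q'(z)$ in closed form,
\[
  \frac{d}{dz}\prn*{\frac{1}{z}P(1+z)}
  =
  \frac{2z - (z+2)\log(1+z)}{2z^2\sqrt{1+z}} - \frac{1}{z^2},
\]
and then invokes the auxiliary inequality $\log(1+z) \geq \tfrac{2z}{z+2}$ (itself proved by differentiation) to conclude that the first term is nonpositive, so the whole expression is at most $-1/z^2 < 0$. Your argument instead reuses the concavity established in \ref{lem:concave_Pz}: writing the numerator of $Q'$ as $N(z) = zP'(1+z) - P(1+z)$, you note $N(0) = -1$ and $N'(z) = zP''(1+z) \leq 0$, so $N$ stays below $-1$. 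This is cleaner and more conceptual---it identifies $\tfrac{1}{z}P(1+z)$ as a secant slope of $P$ from $z=1$, whose monotonicity is exactly concavity---and it avoids the ad hoc logarithm bound. The paper's route, on the other hand, gives a slightly sharper explicit estimate ($Q'(z) \leq -1/z^2$) as a by-product, though that is not used anywhere.
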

\begin{proof}
  For $z \geq 1$, we have
  \begin{align}
    \frac{d}{dz} P(z)
    &=
    \frac{\log z + 2}{2 \sqrt{z}}
    > 0,\\
    \frac{d^2}{dz^2} P(z)
    &=
    - \frac{\log z}{4 z^{3/2}}
    < 0,
  \end{align}
  which implies \cref{lem:increasing_Pz,lem:concave_Pz}.
  To prove \cref{lem:increasing_zP}, we use the following inequality:
  \begin{align}
    \log (1 + z) \geq \frac{2z}{z+2},\quad
    \forall z \geq 0,
    \label{eq:logz_lowerbound}
  \end{align}
  which can be proved by using
  \begin{align}
    \frac{d}{dz}
    \prn*{
      \log (1 + z) - \frac{2z}{z+2}
    }
    = 
    \frac{z^2}{(z + 1) (z + 2)^2}
    \geq 0.
  \end{align}
  For $z > 0$, we have
  \begin{alignat}{2}
    \frac{d}{dz} \prn*{ \frac{1}{z} P \prn*{ 1 + z } }
    &=
    \frac{2 z - (z + 2) \log(1 + z)}{2 z^2 \sqrt{1 + z}} - \frac{1}{z^2}\\
    &\leq
    - \frac{1}{z^2}
    <
    0
    &\quad&\by{\cref{eq:logz_lowerbound}},
  \end{alignat}
  which implies \cref{lem:increasing_zP}.
\end{proof}

\section{Proofs for Section~\ref{sec:algorithm}}
\subsection{Proof of Lemma~\ref{lem:smooth_subproblem}}
\label{sec:proof_smooth_subproblem}
\begin{proof}
  As
  \begin{align}
    \nabla \bar H_{k, \mu} (x)
    =
    \nabla c(x_k)^\top \nabla h \prn[\big]{
      c(x_k) + \nabla c(x_k) (x - x_k)
    }
    + \mu (x - x_k),
  \end{align}
  we have
  \begin{align}
    &\mathInd
    \norm*{
      \nabla \bar H_{k, \mu} (y) - \nabla \bar H_{k, \mu} (x)
    }\\
    &=
    \norm*{
      \nabla c(x_k)^\top \prn[\Big]{
        \nabla h \prn[\big]{
          c(x_k) + \nabla c(x_k) (y - x_k)
        }
        - 
        \nabla h \prn[\big]{
          c(x_k) + \nabla c(x_k) (x - x_k)
        }
      }
      + \mu (y - x)
    }\\
    &\leq
    \normop*{\nabla c(x_k)}
    \norm*{
      \nabla h \prn[\big]{
        c(x_k) + \nabla c(x_k) (y - x_k)
      }
      - 
      \nabla h \prn[\big]{
        c(x_k) + \nabla c(x_k) (x - x_k)
      }
    }
    + \mu \norm{y - x}\\
    &\leq
    L_h
    \normop*{\nabla c(x_k)}
    \norm*{
      \nabla c(x_k) (y - x)
    }
    + \mu \norm{y - x}
    \qquad\text{(by \cref{asm:h_smooth})}\\
    &\leq
    L_h
    \normop*{\nabla c(x_k)}^2
    \norm{y - x}
    + \mu \norm{y - x}\\
    &\leq
    (L_h \sigma^2 + \mu)
    \norm{y - x}
    \qquad\text{(by \cref{asm:nablac_bound})},
  \end{align}
  which completes the proof.
\end{proof}

\subsection{Proof of Lemma~\ref{lem:apg_iteration}}
\label{sec:proof_apg_subproblem}
Before the proof, we show some standard inequalities on \cref{alg:apg}, which will be also used for \cref{thm:oracle_complexity_global}.

The parameter $\eta$ is initialized to $\bar \alpha \mu \ (\leq \bar \alpha \bar L)$ in \cref{alg-line-apg:initialize_etab0}, and the inequality in \cref{alg-line-apg:backtracking} must hold if $\eta \geq \bar L$.
This implies that the parameter $\eta$ in \cref{alg:apg} always satisfies
\begin{align}
  \eta \leq \bar \alpha \bar L.
  \label{eq:eta_upperbound}
\end{align}
Using this bound, we can bound the left-hand side of the termination condition in \cref{alg-line-apg:termination_cond} as
\begin{alignat}{2}
  &\mathInd
  \norm*{\nabla \bar H_{k, \mu}(\bar x_{t+1}) - \nabla \bar H_{k, \mu}(y_t) - \eta (\bar x_{t+1} - y_t)}\\
  &\leq
  \norm*{\nabla \bar H_{k, \mu}(\bar x_{t+1}) - \nabla \bar H_{k, \mu}(y_t)}
  + \eta \norm*{\bar x_{t+1} - y_t}\\
  &\leq
  \bar L \norm*{\bar x_{t+1} - y_t}
  + \eta \norm*{\bar x_{t+1} - y_t}
  &\quad&\by{\cref{lem:smooth_subproblem}}\\
  &\leq
  (1 + \bar \alpha) \bar L \norm*{\bar x_{t+1} - y_t}
  &\quad&\by{\cref{eq:eta_upperbound}}.
  \label{eq:norm_bxtyt_upperbound_norm_xbt1yt}
\end{alignat}

Now, we prove \cref{lem:apg_iteration}.

\begin{proof}[Proof of \cref{lem:apg_iteration}]
  Let $\bar x^*$ denote the optimal solution to subproblem~\cref{eq:subproblem}.
  At \cref{alg-line-apg:termination_cond} of \cref{alg:apg}, we have
  \begin{alignat}{2}
    &\mathInd
    \norm*{\nabla \bar H_{k, \mu}(\bar x_{t+1}) - \nabla \bar H_{k, \mu}(y_t) - \eta (\bar x_{t+1} - y_t)}\\
    &\leq
    (1 + \bar \alpha) \bar L \norm*{\bar x_{t+1} - y_t}
    &\quad&\by{\cref{eq:norm_bxtyt_upperbound_norm_xbt1yt}}\\
    &=
    (1 + \bar \alpha) \bar L \norm*{\bar x_{t+1} - (1 - \tau) \bar x_t - \tau z_t}
    &\quad&\by{\cref{alg-line-apg:set_y}}\\
    &\leq
    (1 + \bar \alpha) \bar L \prn*{
      \norm*{\bar x_{t+1} - \bar x^*}
      + \norm*{(1 - \tau) (\bar x_t - \bar x^*)}
      + \norm*{\tau (z_t - \bar x^*)}
    }\\
    &\leq
    (1 + \bar \alpha) \bar L
    \prn*{
      \norm*{\bar x_{t+1} - \bar x^*}
      + \norm*{\bar x_t - \bar x^*}
      + \norm*{z_t - \bar x^*}
    }
    &\quad&\by{$0 \leq \tau \leq 1$}.
    \label{eq:norm_bxtyt_upperbound}
  \end{alignat}
  Here, we can check $0 \leq \tau \leq 1$ from the definitions of $b_{t+1}$ and $\tau$.
  To bound \cref{eq:norm_bxtyt_upperbound}, we will show several inequalities.

  With a slight modification of \citep[Corollary~B.3]{daspremont2021acceleration}, we obtain
  \begin{align}
    \bar F_{k, \mu}(\bar x_t) - \bar F_{k, \mu}(\bar x^*)
    + \frac{\mu}{2} \norm{z_t - \bar x^*}^2
    &\leq
    \bar \alpha \bar \kappa \mu 
    \prn*{
      1 - \frac{1}{\sqrt{\bar \alpha \bar \kappa}}
    }^t
    \norm*{\bar x_0 - \bar x^*}^2
    \label{eq:apg_bound}
  \end{align}
  for all $t \geq 1$.
  On the other hand, from the strong convexity of $\bar F_{k, \mu}$, we have
  \begin{align}  
    \bar F_{k, \mu}(\bar x_t) - \bar F_{k, \mu}(\bar x^*)
    \geq
    \frac{\mu}{2} \norm*{\bar x_t - \bar x^*}^2.
    \label{eq:strong_convexity_Hbar_bxt}
  \end{align}
  Combining these two bounds yields
  \begin{alignat}{2}
    &\mathInd
    \norm*{\bar x_t - \bar x^*} + \norm*{z_t - \bar x^*}\\
    &\leq
    \sqrt{
      2 \norm*{\bar x_t - \bar x^*}^2
      + 2 \norm*{z_t - \bar x^*}^2
    }
    &\quad&\text{(since $a + b \leq \sqrt{2 (a^2 + b^2)}$ for all $a, b \in \R$)}\\
    &\leq
    \sqrt{
      \frac{4}{\mu} \prn*{ \bar F_{k, \mu}(\bar x_t) - \bar F_{k, \mu}(\bar x^*) + \frac{\mu}{2} \norm*{z_t - \bar x^*}^2 }
    }
    &\quad&\by{\cref{eq:strong_convexity_Hbar_bxt}}\\
    &\leq
    p_t
    \norm*{\bar x_0 - \bar x^*}
    &\quad&\by{\cref{eq:apg_bound}},
    \label{eq:bxtbx_ztbx_norm_upperbound}
  \end{alignat}
  where
  \begin{align}
    p_t
    \coloneqq    
    2 \sqrt{\bar \alpha \bar \kappa}
    \prn*{
      1 - \frac{1}{\sqrt{\bar \alpha \bar \kappa}}
    }^{t/2}
    \leq
    2 \sqrt{\bar \alpha \bar \kappa}
    \exp
    \prn*{
      - \frac{t}{2 \sqrt{\bar \alpha \bar \kappa}}
    }.
  \end{align}
  By setting $t$ to
  \begin{align}
    t
    \geq
    2 \sqrt{\bar \alpha \bar \kappa}
    \log \prn*{
      2 \sqrt{\bar \alpha \bar \kappa}
      \prn*{
        1 + \frac{2 (1 + \bar \alpha) \bar \kappa}{\theta}
      }
    }
  \end{align}
  as in \cref{eq:inner_iteration_complexity}, we have 
  \begin{align}
    p_t
    \leq
    \prn*{
      1 + \frac{2 (1 + \bar \alpha) \bar \kappa}{\theta}
    }^{-1}
    < 1.
    \label{eq:pt_upperbound}
  \end{align}
  From \cref{eq:bxtbx_ztbx_norm_upperbound}, we also have
  \begin{align}
    \norm*{\bar x_{t+1} - \bar x^*}
    \leq 
    p_{t+1} \norm*{\bar x_0 - \bar x^*}
    \leq 
    p_t \norm*{\bar x_0 - \bar x^*}.
    \label{eq:bxtbx_norm_upperbound}
  \end{align}
  This inequality implies
  \begin{align}
    \norm*{\bar x_0 - \bar x^*}
    &\leq
    \norm*{\bar x_{t+1} - \bar x_0}
    + \norm*{\bar x_{t+1} - \bar x^*}
    \leq
    \norm*{\bar x_{t+1} - \bar x_0}
    + p_t \norm*{\bar x_0 - \bar x^*},
  \end{align}
  and hence
  \begin{align}
    \norm*{\bar x_0 - \bar x^*}
    &\leq
    \frac{1}{1 - p_t}
    \norm*{\bar x_{t+1} - \bar x_0}.
    \label{eq:norm_xkbxs_upperbound}
  \end{align}

  Now, we can bound \cref{eq:norm_bxtyt_upperbound} as follows:
  \begin{alignat}{2}
    &\mathInd
    \norm*{\nabla \bar H_{k, \mu}(\bar x_{t+1}) - \nabla \bar H_{k, \mu}(y_t) - \eta (\bar x_{t+1} - y_t)}\\
    &\leq
    (1 + \bar \alpha) \bar L
    \prn*{
      \norm*{\bar x_{t+1} - \bar x^*}
      + \norm*{\bar x_t - \bar x^*}
      + \norm*{z_t - \bar x^*}
    }\\
    &\leq
    2 p_t
    (1 + \bar \alpha) \bar L
    \norm*{\bar x_0 - \bar x^*}
    &\quad&\by{\cref{eq:bxtbx_ztbx_norm_upperbound,eq:bxtbx_norm_upperbound}}\\
    &\leq
    \frac{2p_t}{1 - p_t}
    (1 + \bar \alpha) \bar L
    \norm*{\bar x_{t+1} - \bar x_0}
    &\quad&\by{\cref{eq:norm_xkbxs_upperbound}}\\
    &\leq
    \theta \mu
    \norm*{\bar x_{t+1} - \bar x_0}
    &\quad&\by{\cref{eq:pt_upperbound} and definition \cref{eq:def_kappa} of $\bar \kappa$},
  \end{alignat}
  which completes the proof.
\end{proof}

\section{Proofs for Section~\ref{sec:global_convergence}}
\label{sec:proof_global}
Under \cref{asm:hc}, we have
\begin{align}
  \norm*{\nabla h (c(x_k))}
  \leq
  \sqrt{2 L_h \Delta_k},\quad
  \forall k \in \N
  \label{eq:norm_gradhcxk_upperbound}
\end{align}
as
\begin{alignat}{2}
  \frac{1}{2 L_h} \norm*{\nabla h (c(x_k))}^2
  &\leq
  h(c(x_k)) - h^*
  &\quad&\by{\cref{eq:lem_smoothness_obj_grad-norm_bound}}\\
  &\leq
  g(x_k) + h(c(x_k)) - (g^* + h^*)
  &\quad&\by{\revise{\cref{eq:requirement_gast_hast}}}\\
  &=
  \Delta_k
  &\quad&\by{definition \cref{eq:def_Deltak_Fast} of $\Delta_k$}.
\end{alignat}
\revise{
Similarly, we have 
\begin{align}
  \norm*{\nabla h(c(x_k) + \nabla c(x_k) (x - x_k))}
  \leq
  \sqrt{2 L_h \prn*{\bar F_{k, \mu_k}(x) - (g^* + h^*)} },\quad
  \forall k \in \N,\ 
  x \in \R^d
  \label{eq:norm_gradhw_upperbound_2}
\end{align}
as
\begin{alignat}{2}
  \frac{1}{2 L_h} \norm*{\nabla h(c(x_k) + \nabla c(x_k) (x - x_k))}^2
  &\leq
  h(c(x_k) + \nabla c(x_k) (x - x_k)) - h^*\\
  &\leq
  g(x_k) + h(c(x_k) + \nabla c(x_k) (x - x_k)) - (g^* + h^*)\\
  &\leq
  \bar F_{k, \mu_k}(x) - (g^* + h^*).
\end{alignat}
}
We will use \revise{inequalities~\cref{eq:norm_gradhcxk_upperbound,eq:norm_gradhw_upperbound_2}} to prove \cref{lem:gradMF_norm_upperbound,prop:local_convergence}.

\subsection{Proof of Lemma~\ref{lem:gradMF_norm_upperbound}}
\label{sec:proof_gradMF_norm}
\begin{proof}
  To simplify notation, let
  \begin{align}
    u \coloneqq x - x_k,\quad
    w \coloneqq c(x_k) + \nabla c(x_k) u.
  \end{align}
  From definition \cref{eq:def_error} of $\bar{\omega}_{k, \mu}$, we can take $p \in \partial g(x)$ such that
  \begin{align}
    \norm*{p + \nabla \bar H_{k, \mu_k}(x)}
    =
    \bar{\omega}_{k, \mu_k}(x).
    \label{eq:nablaMg_norm_bound}
  \end{align}
  Then, from definition \cref{eq:def_error} of $\omega$, we also have
  \begin{alignat}{2}
    \omega(x)
    \leq
    \norm*{p + \nabla H(x)}
    &\leq
    \norm*{p + \nabla \bar H_{k, \mu_k}(x)} + \norm*{\nabla \bar H_{k, \mu_k}(x) - \nabla H(x)}\\
    &=
    \bar{\omega}_{k, \mu_k}(x)
    + \norm*{\nabla \bar H_{k, \mu_k}(x) - \nabla H(x)}.
    \label{eq:DeltaXF_upperbound}
  \end{alignat}
  We will bound $\norm*{\nabla \bar H_{k, \mu_k}(x) - \nabla H(x)}$ in \cref{eq:DeltaXF_upperbound}.
  As
  \begin{align}
    &\mathInd
    \nabla \bar H_{k, \mu_k}(x) - \nabla H(x)\\
    &=
    \nabla c(x_k)^\top \nabla h(w)
    - \nabla c(x)^\top \nabla h (c(x))
    + \mu_k u\\
    &=
    \nabla c(x_k)^\top \prn*{ \nabla h(w) - \nabla h (c(x)) }
    - \prn*{ \nabla c(x) - \nabla c(x_k) }^\top \nabla h (c(x))
    + \mu_k u,
  \end{align}
  we have
  \begin{align}
    &\mathInd
    \norm*{\nabla \bar H_{k, \mu_k}(x) - \nabla H(x)}\\
    &\leq
    \normop*{\nabla c(x_k)} \norm*{ \nabla h(w) - \nabla h (c(x)) }
    + \normop*{ \nabla c(x) - \nabla c(x_k) } \norm*{ \nabla h (c(x)) }
    + \mu_k \norm*{u}.
    \label{eq:mkf_norm_upperbound}
  \end{align}
  Each term of \cref{eq:mkf_norm_upperbound} can be bounded as follows:
  first,
  \begin{alignat}{2}
    \normop*{ \nabla c(x_k) }
    &\leq
    \sigma
    &\quad&\by{\cref{asm:nablac_bound}};
  \end{alignat}
  second,
  \begin{alignat}{2}
    \norm*{ \nabla h(w) - \nabla h (c(x)) }
    &\leq
    L_h \norm*{ w - c(x) }
    &\quad&\by{\cref{asm:h_smooth}}\\
    &=
    L_h \norm*{ c(x_k) + \nabla c(x_k) (x - x_k) - c(x) }\\
    &\leq
    \frac{L_cL_h}{2} \norm*{u}^2
    &\quad&\by{\cref{eq:property_nabla-c_lip}};
    \label{eq:norm_diff_gradhw_gradhcx_upperbound}
  \end{alignat}
  third,
  \begin{alignat}{2}
    \normop*{ \nabla c(x) - \nabla c(x_k) }
    &\leq
    L_c \norm*{u}
    &\quad&\by{\cref{asm:nabla-c_lip}}.
  \end{alignat}
  Plugging these bounds into \cref{eq:mkf_norm_upperbound}, we obtain
  \begin{align}
    \norm*{\nabla \bar H_{k, \mu_k}(x) - \nabla H(x)}
    &\leq
    \sigma \prn*{ \frac{L_cL_h}{2} \norm*{u}^2 }
    + \prn*{L_c \norm*{u}} \revise{ \norm*{ \nabla h (c(x)) } }
    + \mu_k \norm*{u}.
  \end{align}
  Again, plugging this bound into \cref{eq:DeltaXF_upperbound} yields
  \revise{
  \begin{align}
    \omega(x)
    &\leq
    \bar{\omega}_{k, \mu_k}(x)
    + \frac{L_c L_h \sigma}{2} \norm*{u}^2
    + L_c \norm*{u} \norm*{\nabla h (c(x))}
    + \mu_k \norm*{u}.
    \label{eq:omega_x_upperbound_general}
  \end{align}
  }
  
  \revise{
  We first show \cref{eq:gradMF_norm_upperbound}.
  Setting $x = x_{k+1} \in S_{k, \mu_k}$ in \cref{eq:omega_x_upperbound_general} gives
  \begin{align}
    \omega(x_{k+1})
    &\leq
    \prn[\Big]{
      (1 + \theta) \mu_k
      + L_c \norm*{\nabla h (c(x_{k+1}))}
    } \norm*{x_{k+1} - x_k}
    + \frac{L_c L_h \sigma}{2} \norm*{x_{k+1} - x_k}^2
  \end{align}
  since $\bar \omega_{k, \mu_k}(x_{k+1}) \leq \theta \mu_k \norm*{x_{k+1} - x_k}$ by the definition of $S_{k, \mu_k}$.
  From \cref{eq:norm_gradhcxk_upperbound,eq:Delta_decreasing}, we have $\norm*{\nabla h (c(x_{k+1}))} \leq \sqrt{2 L_h \Delta_{k+1}} \leq \sqrt{2 L_h \Delta_k}$ and hence obtain \cref{eq:gradMF_norm_upperbound}.
  }

  \revise{
  Next, we prove \cref{eq:gradMF_norm_upperbound_tighter}.
  We have
  \begin{alignat}{2}
    \norm*{\nabla h (c(x))}
    &\leq
    \norm*{\nabla h (w)}
    + \norm*{\nabla h (w) - \nabla h (c(x))}\\
    &\leq
    \norm*{\nabla h (w)}
    + \frac{L_cL_h}{2} \norm*{u}^2
    &\quad&\by{\cref{eq:norm_diff_gradhw_gradhcx_upperbound}}\\
    &\leq
    \sqrt{2 L_h \prn*{\bar F_{k, \mu_k}(x) - (g^* + h^*)} }
    + \frac{L_cL_h}{2} \norm*{u}^2
    &\quad&\by{\cref{eq:norm_gradhw_upperbound_2}}\\
    &\leq
    \sqrt{2 L_h \Delta_k}
    + \frac{L_cL_h}{2} \norm*{u}^2
    &\quad&\by{$\bar F_{k, \mu_k}(x) \leq F(x_k)$}.
  \end{alignat}
  Plugging this bound into \cref{eq:omega_x_upperbound_general} yields
  \begin{align}
    \omega(x)
    &\leq
    \bar{\omega}_{k, \mu_k}(x)
    + \frac{L_c L_h \sigma}{2} \norm*{u}^2
    + L_c \norm*{u}
    \prn*{
      \sqrt{2 L_h \Delta_k}
      + \frac{L_cL_h}{2} \norm*{u}^2
    }
    + \mu_k \norm*{u},
  \end{align}
  and rearranging the terms completes the proof of \cref{eq:gradMF_norm_upperbound_tighter}.
  }
\end{proof}

\subsection{Proof of Theorem~\ref{thm:complexity_global}}
\label{sec:proof_iteration_oracle}
To prove \cref{thm:complexity_global}, we introduce some notations.
Let
\begin{align}
  B_1
  &\coloneqq
  \frac{8}{1 - \theta}
  \prn*{
    (1 + \theta) \sqrt{\rho_{\max}}
    + \frac{L_c \sqrt{2 L_h}}{\sqrt{\rho_{\min}}}
  }^2
  =
  \revise{
    O \prn*{
      \rho_{\max}
      + \frac{L_c^2 L_h}{\rho_{\min}}
    }
  },
  \label{eq:def_B1}\\
  B_2
  &\coloneqq
  \frac{4 L_c L_h \sigma}{(1 - \theta) \rho_{\min}},
  \label{eq:def_B2}\\
  B_3
  &\coloneqq
  \frac{L_h \sigma^2}{\rho_{\min}},
  \label{eq:def_B3}\\
  \revise{
  B_4}
  &
  \revise{
  {}\coloneqq
  1 + 
  \max \set*{
    \ceil*{
      \log_\alpha \prn*{
        \frac{L_c \sqrt{2 L_h}}{(1 - \theta) \rho_{\min}}
      }
    }, \, 
    0
  }
  =
  O \prn*{
    1 + \log \prn*{ \frac{L_c \sqrt{L_h}}{\rho_{\min}} }
  },
  }
  \label{eq:def_B4}
\end{align}
\revise{where we have used $(a+b)^2 \leq 2(a^2 + b^2)$ for $B_1$.}
Let $N$ be the least $k$ satisfying $\omega(x_k) \leq \epsilon$, and let $V \coloneqq \set{0,1,\dots,N-1}$.
Then, we have
\begin{align}
  \omega(x_k)
  > \epsilon,\quad
  \forall k \in V.
  \label{eq:error_eps_lowerbound}
\end{align}
Let $S \subseteq V \setminus \set{N-1}$ and $T \subseteq V \setminus \set{N-1}$ be the sets of $k$ such that \cref{eq:optimality_upperbound1,eq:optimality_upperbound2} hold, respectively.
From \cref{lem:optimality_upperbound}, we have
\begin{align}
  S \cup T = V \setminus \set{N-1},
  \label{eq:ST_V}
\end{align}
and hence, $N \leq |S| + |T| + 1$.

Now, we prove \cref{thm:iteration_complexity_global}.
\begin{proof}
  We will derive upper bounds on $|S|$ and $|T|$.
  We have
  \begin{alignat}{2}
    \sum_{k \in S} \frac{1}{\sqrt{\Delta_k}} 
    &\leq
    B_1
    \sum_{k \in S}
    \frac{\Delta_k - \Delta_{k+1}}{\omega(x_{k+1})^2}
    &\quad&\by{\cref{eq:optimality_upperbound1}}\\
    &\leq
    B_1
    \sum_{k \in S}
    \frac{\Delta_k - \Delta_{k+1}}{\epsilon^2}
    &\quad&\by{\cref{eq:error_eps_lowerbound}}\\
    &\leq
    B_1
    \sum_{k \in V}
    \frac{\Delta_k - \Delta_{k+1}}{\epsilon^2}
    &\quad&\by{$S \subseteq V$ and \cref{eq:Delta_decreasing}}\\
    &=
    \frac{B_1}{\epsilon^2}
    \prn*{ F(x_0) - F(x_N) }
    \leq
    \frac{B_1}{\epsilon^2}
    \prn*{ F(x_0) - F^* },
    \label{eq:invsum_lam_upperbound}
  \end{alignat}
  and this bound implies
  \begin{alignat}{2}
    |S|
    &=
    \sum_{k \in S} 1
    \leq
    \sum_{k \in S} \sqrt{ \frac{\Delta_0}{\Delta_k} }
    \leq
    \frac{B_1 \sqrt{\Delta_0}}{\epsilon^2}
    \prn*{ F(x_0) - F^* }.
    \label{eq:S_size_upperbound}
  \end{alignat}
  As with $|S|$, it follows from \cref{eq:optimality_upperbound2,eq:error_eps_lowerbound} that
  \begin{alignat}{2}
    |T|
    \leq
    \sum_{k \in T} 1
    \leq
    \sum_{k \in T}
    \frac{B_2}{\omega(x_{k+1})}
    \prn*{\sqrt{\Delta_k} - \sqrt{\Delta_{k+1}}}
    \leq
    \sum_{k \in T}
    \frac{B_2}{\epsilon}
    \prn*{\sqrt{\Delta_k} - \sqrt{\Delta_{k+1}}}
    \leq
    \frac{B_2 \sqrt{\Delta_0}}{\epsilon}.
    \label{eq:T_size_upperbound}
  \end{alignat}
  Putting \cref{eq:S_size_upperbound,eq:T_size_upperbound} together yields
  \begin{alignat}{2}
    |S| + |T| + 1
    &\leq
    \frac{B_1 \sqrt{\Delta_0}}{\epsilon^2}
    (F(x_0) - F^*)
    + \frac{B_2 \sqrt{\Delta_0}}{\epsilon}
    + 1\\
    &
    \revise{
    =
    O \prn*{
      \frac{B_1 \sqrt{\Delta_0}}{\epsilon^2}
      (F(x_0) - F^*)
    }
    }\\
    &
    \revise{
    =
    O \prn*{
      \frac{\sqrt{\Delta_0}}{\epsilon^2}
      (F(x_0) - F^*)
      \prn*{
        \rho_{\max}
        + \frac{L_c^2 L_h}{\rho_{\min}}
      }
    }
    }
    &\quad&\by[,]{definition \cref{eq:def_B1} of $B_1$}
  \end{alignat}
  which completes the proof \revise{of \cref{eq:iteration_complexity_general}}.
  \revise{
  If we set $\rho_{\min} = \Theta \prn*{L_c \sqrt{L_h}}$, definition~\cref{eq:def_rhomax} of $\rho_{\max}$ gives $\rho_{\max} = \Theta \prn*{L_c \sqrt{L_h}}$.
  We thus obtain \cref{eq:iteration_complexity_simple}.
  }
\end{proof}

Next, we prove \cref{thm:oracle_complexity_global}.
\begin{proof}
  \revise{
  \Cref{prop:rhomax} shows that subproblems are solved $B_4$ times per iteration, where $B_4$ is defined by \cref{eq:def_B4}.}
  On the other hand, the parameter $\mu$ in \cref{alg:proposed_backtracking} satisfies $\mu \geq \rho_{\min} \sqrt{\Delta_k}$ throughout the $k$-th iteration.
  Thus, from \cref{prop:complexity_per_iteration}, the oracle complexity for the $k$-th iteration is
  \begin{align}
    O \prn*{
      \revise{B_4}
      P \prn*{
        1 + \frac{L_h \sigma^2}{\rho_{\min} \sqrt{\Delta_k}}
      }
    }
    =
    O \prn*{
      \revise{B_4}
      P \prn*{
        1 + \frac{B_3}{\sqrt{\Delta_k}}
      }
    },
  \end{align}
  where $P$ and $B_3$ are defined by \cref{eq:def_A_P,eq:def_B3}, and we used \cref{lem:increasing_Pz}.

  First, we will bound the complexity for the first $N-1$ iterations, i.e.,
  \begin{align}
    O \prn*{
      \revise{B_4}
      \sum_{k \in V \setminus \set{N-1}}
      P \prn*{
        1 + \frac{B_3}{\sqrt{\Delta_k}}
      }
    }.
    \label{eq:inner_complexity_bound1}
  \end{align}
  Using \cref{eq:ST_V}, we decompose this bound as
  \begin{alignat}{2}
    \sum_{k \in V \setminus \set{N-1}}
    P \prn*{
      1 + \frac{B_3}{\sqrt{\Delta_k}}
    }
    \leq
    \sum_{k \in S} P \prn*{ 1 + \frac{B_3}{\sqrt{\Delta_k}} }
    + \sum_{k \in T} P \prn*{ 1 + \frac{B_3}{\sqrt{\Delta_k}} }.
    \label{eq:complexity_sum_decomposition}
  \end{alignat}
  Each term can be bounded as follows:
  \begin{alignat}{2}
    &\mathInd
    \sum_{k \in S}
    P \prn*{
      1 + \frac{B_3}{\sqrt{\Delta_k}} 
    }\\
    &\leq
    |S| P \prn*{
      1 + \frac{B_3}{|S|} 
      \sum_{k \in S} \frac{1}{\sqrt{\Delta_k}}
    }
    &\quad&\by{\cref{lem:concave_Pz}\\and Jensen's inequality}\\
    &\leq
    |S| P \prn*{
      1
      + \frac{B_1 B_3}{|S| \epsilon^2}
      \prn*{ F(x_0) - F^* }
    }
    &\quad&\by{\cref{lem:increasing_Pz,eq:invsum_lam_upperbound}}\\
    &\leq
    \frac{B_1 \sqrt{\Delta_0}}{\epsilon^2}
    \prn*{ F(x_0) - F^* }
    P \prn*{ 1 + \frac{B_3}{\sqrt{\Delta_0}} }
    &\quad&\by{\cref{lem:increasing_zP,eq:S_size_upperbound}}
    \label{eq:invsum_S_upperbound}
  \end{alignat}
  and
  \begin{alignat}{2}
    \sum_{k \in T}
    P \prn*{ 1 + \frac{B_3}{\sqrt{\Delta_k}} }
    &\leq
    \sum_{k \in T}
    P \prn*{ 1 + \frac{B_2 B_3}{\omega(x_{k+1})} }
    &\quad&\by{\cref{lem:increasing_Pz,eq:optimality_upperbound2}}\\
    &\leq
    |T|
    P \prn*{ 1 + \frac{B_2 B_3}{\epsilon} }
    &\quad&\by{\cref{lem:increasing_Pz,eq:error_eps_lowerbound}}\\
    &\leq
    \frac{B_2 \sqrt{\Delta_0}}{\epsilon}
    P \prn*{ 1 + \frac{B_2 B_3}{\epsilon} }
    &\quad&\by{\cref{eq:T_size_upperbound}}\\
    &=
    O \prn*{\epsilon^{-3/2} \log \epsilon^{-1}}
    &\quad&\by{definition \cref{eq:def_A_P} of $P$}.
    \label{eq:invsum_T_upperbound}
  \end{alignat}
  \revise{From \cref{eq:complexity_sum_decomposition,eq:invsum_S_upperbound,eq:invsum_T_upperbound}, we obtain the following upper bound on}
  \revise{\cref{eq:inner_complexity_bound1}}:
  \revise{
  \begin{align}
    &\mathInd
    O \prn*{
      \frac{B_1 B_4 \sqrt{\Delta_0}}{\epsilon^2}
      \prn*{ F(x_0) - F^* }
      P \prn*{ 1 + \frac{B_3}{\sqrt{\Delta_0}}}
    }
    + O \prn*{ B_4 \epsilon^{-3/2} \log \epsilon^{-1}}\\
    &=
    O \prn*{
      \frac{B_1 B_4 \sqrt{\Delta_0}}{\epsilon^2}
      \prn*{ F(x_0) - F^* }
      P \prn*{ 1 + \frac{B_3}{\sqrt{\Delta_0}} }
    }\\
    &=
    O \prn*{
      \frac{B_1 B_4 \sqrt{\Delta_0}}{\epsilon^2}
      \prn*{ F(x_0) - F^* }
      \prn*{
        1 + \sqrt{1 + \frac{B_3}{\sqrt{\Delta_0}}}
        \log \prn*{ 1 + \frac{B_3}{\sqrt{\Delta_0}} }
      }
    }\\
    &=
    O \Bigg(
      \frac{\sqrt{\Delta_0}}{\epsilon^2}
      \prn*{ F(x_0) - F^* }
      \prn*{
        \rho_{\max}
        + \frac{L_c^2 L_h}{\rho_{\min}}
      }
      \prn*{
        1 + \log \prn*{ \frac{L_c \sqrt{L_h}}{\rho_{\min}} }
      }\\
      &\qquad \quad \times
      \prn*{
        1 + \sqrt{1 + \frac{L_h \sigma^2}{\rho_{\min} \sqrt{\Delta_0}}}
        \log \prn*{
          1 + \frac{L_h \sigma^2}{\rho_{\min} \sqrt{\Delta_0}}
        }
      }
    \Bigg),
    \label{eq:complexity_except_final}
  \end{align}
  where
  we have used $O \prn[\big]{\epsilon^{-3/2} \log \epsilon^{-1}} = o(\epsilon^{-2})$ and definitions \cref{eq:def_A_P,eq:def_B1,eq:def_B3,eq:def_B4} of $P$, $B_1$, $B_3$, and $B_4$.
  }


  Next, we will bound the complexity for the last iteration, i.e., the iteration with $k = N-1$.
  As we obtained \cref{eq:inner_complexity_bound1} from \cref{prop:complexity_per_iteration}, the complexity for the last iteration is upper-bounded by
  \begin{alignat}{2}
    O
    \prn*{
      \revise{B_4}
      P \prn*{ 1 + \frac{B_3}{\sqrt{\Delta_{N-1}}} }
    }.
  \end{alignat}
  If $\Delta_{N-1} \geq \epsilon^6$, then 
  \begin{alignat}{2}
    \revise{B_4}
    P \prn*{ 1 + \frac{B_3}{\sqrt{\Delta_{N-1}}} }
    =
    O \prn*{
      \epsilon^{-3/2} \log \epsilon^{-1} 
    }
  \end{alignat}
  from definition \cref{eq:def_A_P} of $P$.
  Thus, the overall oracle complexity is equal to \cref{eq:complexity_except_final}.

  Below, assuming $\Delta_{N-1} < \epsilon^6$, we will show that the point $\bar x_1$ computed by \cref{alg:apg} for $k = N-1$ satisfies $\omega(\bar x_1) \leq \epsilon$ when $\epsilon > 0$ is sufficiently small.
  To simplify notation, let $k \coloneqq N-1$.
  We show several inequalities on \cref{alg:apg}:
  first,
  \begin{alignat}{2}
    \bar \omega_{k, \mu_k}(\bar x_1)
    &\leq
    \norm*{\nabla \bar H_{k, \mu}(\bar x_1) - \nabla \bar H_{k, \mu}(y_0) - \eta (\bar x_1 - y_0)}
    &\quad&\by{\cref{eq:baromega_upperbound}}\\
    &\leq
    (1 + \bar \alpha) \revise{(\mu_k + L_h \sigma^2)} \norm*{\bar x_1 - y_0}
    &\quad&\by{\cref{eq:norm_bxtyt_upperbound_norm_xbt1yt,eq:def_Lbar}}\\
    &=
    (1 + \bar \alpha) \revise{(\mu_k + L_h \sigma^2)} \norm*{\bar x_1 - x_k};
    \label{eq:bokbx1_upperbound}
  \end{alignat}
  second,
  \begin{alignat}{2}
    g(\bar x_1)
    - g(y_0)
    &\leq
    - \inner{\nabla \bar H_{k, \mu_k}(y_0) + \eta (\bar x_1 - y_0)}{\bar x_1 - y_0}
    \label{eq:g_diff_upperbound}
  \end{alignat}
  from \cref{eq:in_partialgx} and definition \cref{eq:def_subdif_g} of $\partial g$; third,
  \begin{alignat}{2}
    \bar H_{k, \mu_k}(\bar x_1)
    - \bar H_{k, \mu_k}(y_0)
    \leq
    \inner{\nabla \bar H_{k, \mu_k}(y_0)}{\bar x_1 - y_0} + \frac{\eta}{2} \norm*{\bar x_1 - y_0}^2
    \label{eq:Hbar_diff_upperbound}
  \end{alignat}
  from \cref{alg-line-apg:backtracking}.
  Putting \cref{eq:Hbar_diff_upperbound,eq:g_diff_upperbound} together yields
  \begin{align}
    \bar F_{k, \mu_k}(\bar x_1) - \bar F_{k, \mu_k}(y_0)
    &=
    \prn*{g(\bar x_1) + \bar H_{k, \mu_k}(\bar x_1)}
    - \prn*{g(y_0) + \bar H_{k, \mu_k}(y_0)}\\
    &\leq
    - \frac{\eta}{2} \norm*{\bar x_1 - y_0}^2.
    \label{eq:barF_diff_upperbound}
  \end{align}
  Using this bound, we obtain
  \begin{alignat}{2}
    \norm*{\bar x_1 - x_k}^2
    &\leq
    \frac{2}{\eta}
    \prn*{
      \bar F_{k, \mu_k}(x_k) - \bar F_{k, \mu_k}(\bar x_1)
    }
    &\quad&\by{\cref{eq:barF_diff_upperbound} and $y_0 = x_k$}\\
    &\leq
    \frac{2}{\eta}
    \prn*{
      \bar F_{k, \mu_k}(x_k) - (g^* + h^*)
    }\\
    &=
    \frac{2 \Delta_k}{\eta}
    &\quad&\by{$\bar F_{k, \mu_k}(x_k) = F(x_k)$ and definition \cref{eq:def_Deltak_Fast} of $\Delta_k$}\\
    &\leq
    \frac{2 \sqrt{\Delta_k}}{\rho_{\min}}
    &\quad&\by{$\eta \geq \mu_k$ and \cref{eq:muk_lowerupperbounds}}\\
    &\leq
    \frac{2 \epsilon^3}{\rho_{\min}}
    &\quad&\by{$\Delta_k = \Delta_{N-1} < \epsilon^6$}.
    \label{eq:norm_bx1xk_upperbound}
  \end{alignat}
  Now, we bound $\omega(\bar x_1)$ as follows:
  \begin{alignat}{2}
    \omega(\bar x_1)
    &\leq
    \bar \omega_{k, \mu_k}(\bar x_1)
    + \prn*{ \mu_k + L_c \sqrt{2 L_h \Delta_k} } \norm*{\bar x_1 - x_k}
    + \frac{L_c L_h \sigma}{2} \norm*{\bar x_1 - x_k}^2\\
    &\quad
    \revise{{}+ \frac{L_c^2 L_h}{2} \norm*{\bar x_1 - x_k}^3}
    &\quad&\by{\cref{eq:gradMF_norm_upperbound_tighter}}\\
    &\revise{{}\leq{}
    \prn*{ 
      (2 + \bar \alpha) \revise{(\mu_k + L_h \sigma^2)} + L_c \sqrt{2 L_h \Delta_k}
    } \norm*{\bar x_1 - x_k}
    + \frac{L_c L_h \sigma}{2} \norm*{\bar x_1 - x_k}^2}\\
    &\quad
    \revise{{}+ \frac{L_c^2 L_h}{2} \norm*{\bar x_1 - x_k}^3}
    &\quad&\by{\cref{eq:bokbx1_upperbound}}\\
    &\revise{{}\leq{}
    (2 + \bar \alpha) L_h \sigma^2 \sqrt{\frac{2 \epsilon^3}{\rho_{\min}}}
    + \prn*{ 
      (2 + \bar \alpha) \mu_k + L_c \sqrt{2 L_h \Delta_k}
    } O(\epsilon^{3/2})
    + O(\epsilon^3) }
    &\quad&\by{\cref{eq:norm_bx1xk_upperbound}}\\
    &\revise{{}\leq{}
    (2 + \bar \alpha) L_h \sigma^2 \sqrt{\frac{2 \epsilon^3}{\rho_{\min}}}
    + \prn*{ (2 + \bar \alpha) \rho_{\max} + L_c \sqrt{2 L_h} } \sqrt{\Delta_k} O(\epsilon^{3/2})
    + O(\epsilon^3) }
    &\quad&\by{\cref{eq:muk_lowerupperbounds}}\\
    &\revise{{}\leq{}
    (2 + \bar \alpha) L_h \sigma^2 \sqrt{\frac{2 \epsilon^3}{\rho_{\min}}}
    + O(\epsilon^3) }
    &\quad&\by{$\Delta_k < \epsilon^6$}.
    \label{eq:proof_omegabarx1_upperbound}
  \end{alignat}
  \revise{Therefore, using the last inequality and taking $\epsilon>0$ so small that
  \begin{align}
    L_h \sigma^2 \sqrt{\frac{\epsilon^3}{\rho_{\min}}}
    \lesssim
    \epsilon,
    \quad\text{i.e.,}\quad
    \epsilon
    \lesssim
    \frac{\rho_{\min}}{(L_h \sigma^2)^2},\label{eq:lesssim}
  \end{align}
  we ensure $\omega(\bar x_1) \leq \epsilon$.}
  \revise{
  We now have proved that the overall oracle complexity is equal to \cref{eq:complexity_except_final}, i.e., \cref{eq:oracle_complexity_general}.
  If we set $\rho_{\min} = \Theta \prn*{L_c \sqrt{L_h}}$, this complexity simplifies to
  \begin{align}
    &\mathInd
    O \Bigg(
      \frac{\sqrt{\Delta_0}}{\epsilon^2}
      \prn*{ F(x_0) - F^* }
      \prn*{
        \rho_{\max}
        + \frac{L_c^2 L_h}{\rho_{\min}}
      }
      \prn*{
        1 + \log \prn*{ \frac{L_c \sqrt{L_h}}{\rho_{\min}} }
      }\\
      &\qquad \quad \times
      \prn*{
        1 + \sqrt{1 + \frac{L_h \sigma^2}{\rho_{\min} \sqrt{\Delta_0}}}
        \log \prn*{
          1 + \frac{L_h \sigma^2}{\rho_{\min} \sqrt{\Delta_0}}
        }
      }
    \Bigg)\\
    &=
    O \prn*{
      \frac{L_c \sqrt{L_h \Delta_0}}{\epsilon^2}
      \prn*{ F(x_0) - F^* }
      \prn*{
        1 + \sqrt{1 + \frac{\sqrt{L_h} \sigma^2}{L_c \sqrt{\Delta_0}}}
        \log \prn*{
          1 + \frac{\sqrt{L_h} \sigma^2}{L_c \sqrt{\Delta_0}}
        }
      }
    }
  \end{align}
  since $\rho_{\max} = \Theta \prn*{L_c \sqrt{L_h}}$.
  We thus obtain \cref{eq:oracle_complexity_simple}.
  }
\end{proof}

\section{Proofs for Section~\ref{sec:local_convergence}}
Under \cref{asm:hc}, we have
\begin{align}
  \norm*{\nabla h(c(x^*))} \leq \sqrt{2 L_h \Delta_\infty},\quad
  \forall x^* \in X^*
  \label{eq:grad-h_norm_upperbound}
\end{align}
as
\begin{alignat}{2}
  \frac{1}{2 L_h} \norm*{\nabla h(c(x^*))}^2
  &\leq
  h(c(x^*)) - h^*
  &\quad&\by{\cref{eq:lem_smoothness_obj_grad-norm_bound}}\\
  &\leq
  h(c(x^*)) - h^* + g(x^*) - g^*
  &\quad&\by{\revise{\cref{eq:requirement_gast_hast}}}\\
  &=
  F(x^*) - (g^* + h^*)\\
  &\leq
  F_\infty - (g^* + h^*)
  =
  \Delta_\infty
  &\quad&\by{definition \cref{eq:def_Xast} of $X^*$}.
\end{alignat}
We will use \cref{eq:grad-h_norm_upperbound} to prove \cref{lem:update_leq_dist}.

\subsection{Proof of Lemma~\ref{lem:update_leq_dist}}
\label{sec:proof_update_leq_dist}
\begin{proof}
  We fix a point $x^*_k \in X^*$ closest to $x_k$ arbitrarily.
  To simplify notation, let
  \begin{align}
    u \coloneqq x_{k+1} - x_k,\quad
    v \coloneqq x^*_k - x_k,\quad
    w \coloneqq c(x_k) + \nabla c(x_k) u,\quad
    z \coloneqq c(x_k) + \nabla c(x_k) v.
    \label{eq:def_uvwz}
  \end{align}
  We will prove the result by deriving upper and lower bounds for $\bar F_{k, \mu_k}(x^*_k) - \bar F_{k, \mu_k}(x_{k+1})$.
  We have the following lower bound:
  \begin{alignat}{2}
    &\mathInd
    \bar F_{k, \mu_k}(x^*_k) - \bar F_{k, \mu_k}(x_{k+1})\\
    &\geq
    \frac{\mu_k}{2} \norm*{x^*_k - x_{k+1}}^2
    - \bar{\omega}_{k, \mu}(x_{k+1}) \norm*{x^*_k - x_{k+1}}
    &\quad&\by{\cref{eq:proj-grad_norm_lowerbound_sub}}\\
    &\geq
    \frac{\mu_k}{2} \norm*{x^*_k - x_{k+1}}^2
    - \theta \mu_k \norm*{x_{k+1} - x_k} \norm*{x^*_k - x_{k+1}}
    &\quad&\by{$x_{k+1} \in S_{k, \mu_k}$}\\
    &=
    \frac{\mu_k}{2} \norm*{u - v}^2
    - \theta \mu_k \norm*{u} \norm*{u - v}.
    \label{eq:Fbar_diff_lowerbound}
  \end{alignat}
  To derive the upper bound, we decompose $\bar F_{k, \mu_k}(x^*_k) - \bar F_{k, \mu_k}(x_{k+1})$ as
  \begin{alignat}{2}
    \bar F_{k, \mu_k}(x^*_k) - \bar F_{k, \mu_k}(x_{k+1})
    &=
    \prn*{
      g(x^*_k) - g(x_{k+1}) + \inner*{\nabla h(c(x^*_k))}{z - w}
    }\\
    &\quad
    + \prn*{
      h(z) - h(w) - \inner*{\nabla h(c(x^*_k))}{z - w}
    }
    + \frac{\mu_k}{2} \prn*{ \norm*{v}^2 - \norm*{u}^2 }.
    \label{eq:Fbar_diff_decomposition}
  \end{alignat}
  Each term is bounded as follows: first,
  \begin{alignat}{2}
    &\mathInd
    g(x^*_k) - g(x_{k+1}) + \inner*{\nabla h(c(x^*_k))}{z - w}\\
    &\leq
    \inner*{\nabla H(x^*_k)}{x_{k+1} - x^*_k} + \inner*{\nabla h(c(x^*_k))}{z - w}
    &\quad&\by{\cref{eq:proj-grad_norm_lowerbound} and $\omega(x^*_k) = 0$}\\
    &=
    \inner*{\nabla h(c(x^*_k))}{\prn*{\nabla c(x^*_k) - \nabla c(x_k)} (u - v)}
    &\quad&\by{$\nabla H(x^*_k) = \nabla c(x^*_k)^\top \nabla h(c(x^*_k))$\\and \cref{eq:def_uvwz}}\\
    &\leq
    \norm*{\nabla h(c(x^*_k))}
    \normop*{\nabla c(x^*_k) - \nabla c(x_k)}
    \norm*{u - v}\\
    &\leq
    \sqrt{2 L_h \Delta_\infty} \times L_c \norm*{v} \times \norm*{u - v}
    &\quad&\by{\cref{eq:grad-h_norm_upperbound,asm:nabla-c_lip}}\\
    &\leq
    \frac{L_c \sqrt{2 L_h}}{\rho_{\min}} \mu_k \norm*{v} \norm*{u - v}
    &\quad&\by{$\Delta_\infty \leq \Delta_k$ and \cref{eq:muk_lowerupperbounds}};
    \label{eq:Fbar_diff_decomposition_upperbound1}
  \end{alignat}
  second,
  \begin{alignat}{2}
    h(z) - h(w) - \inner*{\nabla h(c(x^*_k))}{z - w}
    &\leq
    \frac{L_h}{2} \norm*{z - c(x^*_k)}^2
    &\quad&\by{\cref{eq:lem_three-points}}\\
    &=
    \frac{L_h}{2} \norm*{ c(x_k) + \nabla c(x_k) v - c(x^*_k) }^2\\
    &\leq
    \frac{L_h}{2} \prn*{ \frac{L_c}{2} \norm*{v}^2 }^2
    &\quad&\by{\cref{eq:property_nabla-c_lip}}.
    \label{eq:Fbar_diff_decomposition_upperbound2}
  \end{alignat}
  Let
  \begin{align}
    \phi
    \coloneqq
    \frac{L_c \sqrt{2 L_h}}{\rho_{\min}}.
    \label{eq:def_phi}
  \end{align}
  Putting \cref{eq:Fbar_diff_lowerbound,eq:Fbar_diff_decomposition,eq:Fbar_diff_decomposition_upperbound1,eq:Fbar_diff_decomposition_upperbound2} together yields 
  \begin{align}
    \frac{\mu_k}{2} \norm*{u - v}^2
    - \theta \mu_k \norm*{u} \norm*{u - v}
    &\leq
    \bar F_{k, \mu_k}(x^*_k) - \bar F_{k, \mu_k}(x_{k+1})\\
    &\leq
    \phi \mu_k \norm*{v} \norm*{u - v}
    + \frac{L_h}{2} \prn*{ \frac{L_c}{2} \norm*{v}^2 }^2
    + \frac{\mu_k}{2} \prn*{ \norm*{v}^2 - \norm*{u}^2 }.
  \end{align}
  Rearranging the terms and using $2ab - b^2 \leq a^2$ for $a, b \in \R$, we obtain
  \begin{align}
    \norm*{u}^2
    &\leq
    \norm*{v}^2 + \frac{L_c^2 L_h}{4\mu_k} \norm*{v}^4
    + 2 \prn*{
      \theta \norm*{u} + \phi \norm*{v}
    } \norm*{v - u}
    - \norm*{v - u}^2\\
    &\leq
    \norm*{v}^2 + \frac{L_c^2 L_h}{4\mu_k} \norm*{v}^4
    + \prn*{
      \theta \norm*{u} + \phi \norm*{v}
    }^2.
    \label{eq:u_norm_upperbound}
  \end{align}
  
  Here, from $\Delta_\infty \geq 0$ and \cref{asm:holderian_growth}, we have
  \begin{align}
    \Delta_k
    \geq
    \Delta_k
    -
    \Delta_\infty
    = 
    F(x_k) - F_\infty
    \geq
    \frac{\gamma}{r} \norm*{v}^r.
    \label{eq:Deltak_lowerbound}
  \end{align}
  Using this bound, we bound the second term of \cref{eq:u_norm_upperbound} as follows:
  \begin{alignat}{2}
    \frac{L_c^2 L_h}{4 \mu_k} \norm*{v}^2
    &\leq
    \frac{L_c^2 L_h}{4 \rho_{\min} \sqrt{\Delta_k}} \norm*{v}^2
    &\quad&\by{\cref{eq:muk_lowerupperbounds}}\\
    &\leq
    \frac{L_c^2 L_h}{4 \rho_{\min} \sqrt{\gamma / r}} \norm*{v}^{2 - r/2}
    &\quad&\by{\cref{eq:Deltak_lowerbound}}\\
    &\leq
    \frac{L_c^2 L_h}{4 \rho_{\min} \sqrt{\gamma / r}} 
    \times \frac{8 \sqrt{2 \gamma}}{L_c \sqrt{r L_h}}
    =
    2 \phi
    &\quad&\by{\cref{eq:asm_vk_norm_upperbound} and definition \cref{eq:def_phi} of $\phi$},
  \end{alignat}
  and we thus have
  \begin{align}
    \norm*{u}^2
    \leq
    \norm*{v}^2
    + 2 \phi \norm*{v}^2
    + \prn*{
      \theta \norm*{u}
      + \phi \norm*{v}
    }^2.
  \end{align}
  As this inequality is quadratic for $\norm*{u}$, we can solve it as follows:
  \begin{align}
    \norm*{u}
    &\leq
    \frac{1}{1 - \theta^2}
    \prn*{
      \theta \phi
      + \sqrt{
        (\theta \phi)^2
        + (1 - \theta^2) (1 + \phi)^2
      }
    }
    \norm*{v}\\
    &\leq
    \frac{1}{1 - \theta^2}
    \prn*{
      \theta \phi
      + \prn*{
        \phi + \sqrt{1 - \theta^2}
      }
    }
    \norm*{v}
    = C_1 \norm*{v},
  \end{align}
  which completes the proof.
\end{proof}

\subsection{Proof of Proposition~\ref{prop:local_convergence}}
\label{sec:proof_prop_localconv}
\begin{proof}
  We fix a point $x^*_{k+1} \in X^*$ closest to $x_{k+1}$ arbitrarily.
  To simplify notation, let
  \begin{align}
    u
    \coloneqq
    x_{k+1} - x_k,\quad
    v^+
    \coloneqq
    x^*_{k+1} - x_{k+1}.
  \end{align}
  We will prove \cref{eq:local_convergence_rate} assuming $\delta_{k+1} \neq 0$.
  First, since $F(x^*_{k+1}) \leq F_\infty$ and $h$ is convex, we have
  \begin{alignat}{2}
    \delta_{k+1}
    &=
    F(x_{k+1}) - F_\infty\\
    &\leq
    g(x_{k+1}) - g(x^*_{k+1}) + h(c(x_{k+1})) - h(c(x^*_{k+1}))\\
    &\leq
    g(x_{k+1}) - g(x^*_{k+1})
    + \inner*{\nabla h(c(x_{k+1}))}{c(x_{k+1}) - c(x^*_{k+1})}\\
    &=
    g(x_{k+1}) - g(x^*_{k+1})
    - \inner*{\nabla H(x_{k+1})}{v^+}\\
    &\quad
    + \inner*{\nabla h(c(x_{k+1}))}{c(x_{k+1}) + \nabla c(x_{k+1}) v^+ - c(x^*_{k+1})}.
    \label{eq:gnv_upperbound}
  \end{alignat}
  Each term of \cref{eq:gnv_upperbound} is bounded as follows:
  first,
  \begin{alignat}{2}
    &\mathInd
    g(x_{k+1}) - g(x^*_{k+1})
    - \inner*{\nabla H(x_{k+1})}{v^+}\\
    &\leq
    \omega(x_{k+1})
    \norm{v^+}
    &\quad&\by{\cref{lem:proj-grad_norm_lowerbound}}\\
    &\leq
    \prn*{
      (1 + \theta) \mu_k + L_c \sqrt{2 L_h \Delta_k}
      + \frac{L_c L_h \sigma}{2} \norm*{u}
    }
    \norm*{u} \norm*{v^+}
    &\quad&\by{\cref{eq:gradMF_norm_upperbound}}\\
    &\leq
    \prn*{
      \prn*{ (1 + \theta) \rho_{\max} + L_c \sqrt{2 L_h} } \sqrt{\Delta_k}
      + \frac{L_c L_h \sigma}{2} \norm*{u}
    }
    \norm*{u} \norm*{v^+}
    &\quad&\by{\cref{eq:muk_lowerupperbounds}}\\
    &=
    \prn*{
      C_2 \sqrt{\Delta_k}
      + \frac{L_c L_h \sigma}{2} \norm*{u}
    }
    \norm*{u} \norm*{v^+}
    &\quad&\by{definition \cref{eq:def_C23} of $C_2$};
    \label{eq:inner_gradh_upperbound}
  \end{alignat}
  second,
  \begin{alignat}{2}
    &\mathInd
    \inner*{\nabla h(c(x_{k+1}))}{c(x_{k+1}) + \nabla c(x_{k+1}) v^+ - c(x^*_{k+1})}\\
    &\leq
    \norm*{\nabla h(c(x_{k+1}))}
    \norm*{c(x_{k+1}) + \nabla c(x_{k+1}) v^+ - c(x^*_{k+1})}\\
    &\leq
    \sqrt{2 L_h \Delta_{k+1}} \times \frac{L_c}{2} \norm*{v^+}^2
    &\quad&\by{\cref{eq:norm_gradhcxk_upperbound,eq:property_nabla-c_lip}}\\
    &=
    \frac{L_c}{2} \sqrt{2 L_h (\delta_{k+1} + \Delta_\infty)} \norm*{v^+}^2
    &\quad&\by{definitions \cref{eq:def_Deltak_Fast,eq:def_Deltainf,eq:def_deltak}\\of $\Delta_k$, $\Delta_\infty$, and $\delta_k$}\\
    &\leq
    \frac{\delta_{k+1}}{2}
    \prn*{\frac{r \delta_{k+1}}{\gamma}}^{-2/r} \norm*{v^+}^2
    &\quad&\by{\cref{eq:asm_vk_norm_upperbound2}}.
    \label{eq:inner_gradh_upperbound2}
  \end{alignat}
  Plugging \cref{eq:inner_gradh_upperbound,eq:inner_gradh_upperbound2} into \cref{eq:gnv_upperbound} yields
  \begin{align}
    \delta_{k+1}
    \leq
    \prn*{
      C_2 \sqrt{\Delta_k}
      + \frac{L_c L_h \sigma}{2} \norm*{u}
    }
    \norm*{u} \norm*{v^+}
    +
    \frac{\delta_{k+1}}{2}
    \prn*{\frac{r \delta_{k+1}}{\gamma}}^{-2/r}
    \norm*{v^+}^2.
    \label{eq:deltak1_upperbound}
  \end{align}

  Here, from \cref{asm:holderian_growth,lem:update_leq_dist}, we have
  \begin{align}
    \norm*{u}
    &\leq
    C_1 \dist(x_k, X^*)
    \leq
    C_1
    \prn*{
      \frac{r \delta_k}{\gamma}
    }^{1/r},\\
    \norm*{v^+}
    &=
    \dist(x_{k+1}, X^*)
    \leq
    \prn*{
      \frac{r \delta_{k+1}}{\gamma}
    }^{1/r}.
  \end{align}
  Plugging these bounds into \cref{eq:deltak1_upperbound}, we obtain
  \begin{align}
    \delta_{k+1}
    \leq
    \prn*{
      C_2 \sqrt{\Delta_k}
      + \frac{L_c L_h \sigma}{2}
      C_1
      \prn*{
        \frac{r \delta_k}{\gamma}
      }^{1/r}
    }
    C_1
    \prn*{
      \frac{r \delta_k}{\gamma}
    }^{1/r}
    \prn*{
      \frac{r \delta_{k+1}}{\gamma}
    }^{1/r}
    +
    \frac{\delta_{k+1}}{2},
  \end{align}
  and rearranging the terms yields
  \begin{align}
    \delta_{k+1}^{1-1/r}
    &\leq
    \prn*{
      2 C_1 C_2 
      \prn*{
        \frac{r}{\gamma}
      }^{2/r}
      \sqrt{\Delta_k}
      +
      2 C_1^2
      L_c L_h \sigma
      \prn*{
        \frac{r}{\gamma}
      }^{3/r}
      \delta_k^{1/r}
    }
    \delta_k^{1/r}\\
    &=
    \prn*{
      C_3
      \sqrt{\delta_k + \Delta_\infty}
      +
      C_4
      \delta_k^{1/r}
    }
    \delta_k^{1/r},
  \end{align}
  which completes the proof.
\end{proof}

\section{Complexity of proximal gradient method}
\label{sec:complexity_pg}
We describe the complexity of the proximal gradient (PG) method in \cref{table:complexity}.
We define a function $H$ by \cref{eq:def_H_Hbar} and suppose that \cref{asm:hc} holds.

The oracle complexity and iteration complexity of PG methods are equal since the method computes the gradient and the proximal mapping only once per iteration.
The following result on the iteration complexity is standard (see, e.g., \citep[Theorem~10.15]{beck2017first}): the iteration complexity of the proximal gradient method is
\begin{align}
  O \prn*{ L_H \epsilon^{-2} (F(x_0) - F^*) },
  \label{eq:complexity_gd_general}
\end{align}
where $L_H$ is the Lipschitz constant of $\nabla H$.

The following proposition provides upper bounds on the Lipschitz constant $L_H$, which yields the complexity bounds listed in \cref{table:complexity}.
We can check that these upper bounds are tight for some instances but omit the details.
\begin{proposition}
  Suppose that \cref{asm:hc} holds and that for some $\sigma > 0$,
  \begin{align}
    \normop*{\nabla c(x)} \leq \sigma,\quad
    \forall x \in \dom g.
    \label{eq:asm_normop_nablac}
  \end{align}
  Then, the following holds:
  \begin{align}
    \norm*{
      \nabla H(y) - \nabla H(x)
    }
    &\leq
    \prn*{
      L_h \sigma^2 + L_c \sqrt{2 L_h \Delta_0}
    }
    \norm*{y - x},\quad
    \forall x, y \in \dom g
    \ \text{s.t.}\ 
    F(x) \leq F(x_0),
    \label{eq:norm_diff_nablaH_upperbound}
  \end{align}
  where $\Delta_0$ is defined by \cref{eq:def_Deltak_Fast}.
  Furthermore, if $h$ is $K_h$-Lipschitz continuous,
  then the following holds:
  \begin{align}
    \norm*{
      \nabla H(y) - \nabla H(x)
    }
    &\leq
    \prn*{
      L_h \sigma^2 + L_c K_h
    }
    \norm*{y - x},\quad
    \forall x, y \in \dom g.
    \label{eq:norm_diff_nablaH_upperbound2}
  \end{align}
\end{proposition}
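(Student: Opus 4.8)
The plan is to compute $\nabla H$ explicitly and then bound $\nabla H(y) - \nabla H(x)$ by an add-and-subtract argument, exactly as in the proof of \cref{lem:smooth_subproblem}. Since $H(x) = h(c(x))$ by \cref{eq:def_H_Hbar}, the chain rule gives $\nabla H(x) = \nabla c(x)^\top \nabla h(c(x))$. I would then write
\begin{align*}
  \nabla H(y) - \nabla H(x)
  = \nabla c(y)^\top \prn*{ \nabla h(c(y)) - \nabla h(c(x)) }
  + \prn*{ \nabla c(y) - \nabla c(x) }^\top \nabla h(c(x)),
\end{align*}
so that by the triangle inequality and submultiplicativity of the operator norm,
\begin{align*}
  \norm*{\nabla H(y) - \nabla H(x)}
  \leq \normop*{\nabla c(y)} \norm*{\nabla h(c(y)) - \nabla h(c(x))}
  + \normop*{\nabla c(y) - \nabla c(x)} \norm*{\nabla h(c(x))}.
\end{align*}

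For the first summand I would use $\normop*{\nabla c(y)} \leq \sigma$ from \cref{eq:asm_normop_nablac}, the Lipschitz continuity of $\nabla h$ from \cref{asm:h_smooth} to get $\norm*{\nabla h(c(y)) - \nabla h(c(x))} \leq L_h \norm*{c(y) - c(x)}$, and then the bound $\norm*{c(y) - c(x)} \leq \sigma \norm*{y - x}$, which follows by integrating $\nabla c$ along the segment $[x, y]$ and again invoking \cref{eq:asm_normop_nablac}. Together these give $L_h \sigma^2 \norm*{y - x}$ for the first summand. For the second summand I would apply \cref{asm:nabla-c_lip} to get $\normop*{\nabla c(y) - \nabla c(x)} \leq L_c \norm*{y - x}$, leaving only $\norm*{\nabla h(c(x))}$ to be controlled.

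The two cases differ only in how $\norm*{\nabla h(c(x))}$ is bounded. For the first claim \cref{eq:norm_diff_nablaH_upperbound} I would mimic the derivation of \cref{eq:norm_gradhcxk_upperbound}: by \cref{eq:lem_smoothness_obj_grad-norm_bound} one has $\norm*{\nabla h(c(x))}^2 \leq 2 L_h (h(c(x)) - h^*) \leq 2 L_h (F(x) - (g^* + h^*))$, and the hypothesis $F(x) \leq F(x_0)$ then yields $\norm*{\nabla h(c(x))} \leq \sqrt{2 L_h \Delta_0}$. For the second claim \cref{eq:norm_diff_nablaH_upperbound2}, the $K_h$-Lipschitz continuity of $h$ gives $\norm*{\nabla h(c(x))} \leq K_h$ directly, with no restriction on $x$. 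Substituting the respective bound produces the two stated estimates.

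This argument is essentially routine; the only point needing a little care is the intermediate step $\norm*{c(y) - c(x)} \leq \sigma \norm*{y - x}$, which requires the whole segment joining $x$ and $y$ to lie in $\dom g$ so that \cref{eq:asm_normop_nablac} applies along it, and this holds because $\dom g$ is convex by \cref{asm:g_convex}. I would also remark that the restriction $F(x) \leq F(x_0)$ constrains only $x$ and not $y$, which is exactly what the argument needs, since the sublevel-set bound enters solely through the factor $\norm*{\nabla h(c(x))}$.
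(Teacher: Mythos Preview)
Your proposal is correct and follows essentially the same route as the paper: the same add-and-subtract decomposition of $\nabla H(y)-\nabla H(x)$, the same bounds via \cref{asm:h_smooth}, \cref{asm:nabla-c_lip}, and \cref{eq:asm_normop_nablac}, the mean-value/segment argument for $\norm{c(y)-c(x)}\le\sigma\norm{y-x}$, and the same two ways of bounding $\norm{\nabla h(c(x))}$ (via \cref{eq:lem_smoothness_obj_grad-norm_bound} in the first case and the $K_h$-Lipschitz property in the second). Your remark that convexity of $\dom g$ is what makes the segment argument valid is a welcome clarification that the paper leaves implicit.
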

\begin{proof}
  As
  \begin{align}
    \nabla H(y) - \nabla H(x)
    &=
    \nabla c(y)^\top \nabla h(c(y))
    - \nabla c(x)^\top \nabla h(c(x))\\
    &=
    \nabla c(y)^\top \prn*{ \nabla h(c(y)) - \nabla h(c(x)) }
    + \prn*{\nabla c(y) - \nabla c(x)}^\top \nabla h(c(x)),
  \end{align}
  we have
  \begin{align}
    \norm*{
      \nabla H(y) - \nabla H(x)
    }
    &\leq
    \normop*{ \nabla c(y) } \norm*{ \nabla h(c(y)) - \nabla h(c(x)) }
    + \normop*{\nabla c(y) - \nabla c(x)} \norm*{\nabla h(c(x))}\\
    &\leq
    L_h \sigma \norm*{ c(y) - c(x) }
    + L_c \norm*{\nabla h(c(x))} \norm*{y - x}.
    \label{eq:norm_diff_nablaH_upperbound3}
  \end{align}
  It follows from the mean value theorem and \cref{eq:asm_normop_nablac} that $\norm*{ c(y) - c(x) } \leq \sigma \norm*{y - x}$, and it follows, similarly to \cref{eq:norm_gradhcxk_upperbound}, that
  \begin{align}
    \norm*{\nabla h(c(x))}
    \leq
    \sqrt{2 L_h \prn*{ F(x) - (g^* + h^*) }}
    \leq
    \sqrt{2 L_h \prn*{ F(x_0) - (g^* + h^*) }}
    =
    \sqrt{2 L_h \Delta_0}.
    \label{eq:norm_nablahcx_upperbound}
  \end{align}
  for $x \in \dom g$ such that $F(x) \leq F(x_0)$.
  Plugging these bounds into \cref{eq:norm_diff_nablaH_upperbound3} yields \cref{eq:norm_diff_nablaH_upperbound}.
  We also obtain \cref{eq:norm_diff_nablaH_upperbound2} by using
  \begin{align}
    \norm*{ \nabla h(z) }
    =
    \lim_{t \to 0}
    \frac{\abs*{ h(z + t \nabla h(z)) - h(z) }}{ t \norm*{ \nabla h(z) } }
    \leq
    K_h
  \end{align}
  with $z = c(x)$, which follows from the Lipchitz continuity of $h$, in \cref{eq:norm_diff_nablaH_upperbound3}.
\end{proof}

Bounds \cref{eq:complexity_gd_general,eq:norm_diff_nablaH_upperbound} yield the complexity $O \prn*{ \sqrt{L_h \Delta_0} \kappa }$ of PG in \cref{table:complexity} as follows:
\begin{align}
  \frac{L_H \epsilon^{-2} (F(x_0) - F^*)}{L_c \epsilon^{-2} (F(x_0) - F^*)}
  =
  \frac{L_H}{L_c}
  \leq
  \frac{L_h \sigma^2 + L_c \sqrt{2 L_h \Delta_0}}{L_c}
  =
  O \prn[\big]{ \sqrt{L_h \Delta_0} \kappa },
\end{align}
where $\kappa$ is defined in \cref{eq:def_kappa_sigma}.

\section{Details of implementation for numerical experiments}
\label{sec:implementation}
In the numerical experiments, \revise{we implemented all the methods using automatic differentiation with JAX~\citep{jax2018github}.}
\revise{The LM methods, \revise{\DP and \Proposed,}} access the oracles several times to compute the Jacobian-vector products $\nabla c(x) u$ and $\nabla c(x)^\top v$ to solve subproblem~\cref{eq:subproblem}.
Note that $u$ and $v$ take on various vectors, but $x$ is fixed at $x_k$.
In such cases, using the \texttt{jax.linearize} method is efficient, which generates the function $u \mapsto \nabla c(x_k) u$, and we can also get the function $v \mapsto \nabla c(x_k)^\top v$ using the \texttt{jax.linear\_transpose} method.
We used these two methods to implement the LM methods.

\revise{
For \ABO method, we implemented a compact representation of BFGS update \citep[p.~181]{nocedal2006numerical}.
In the BFGS computation (\citep[Eq.~(7.29)]{nocedal2006numerical}), we used LU factorization via \texttt{scipy.linalg.lu\_factor}.
}



\end{document}